\newtheorem{theorem}{Theorem}[section]
\newtheorem{lemma}[theorem]{Lemma}
\newtheorem{proposition}[theorem]{Proposition}
\newtheorem{corollary}[theorem]{Corollary}
\theoremstyle{definition}
\newtheorem{definition}[theorem]{Definition}
\newtheorem{example}[theorem]{Example}
\theoremstyle{remark}
\newtheorem{remark}[theorem]{Remark}
\def\La{\Lambda}   
\def\Ga{\Gamma}       \def\De{\Delta}
       \def\th{\theta}
\def\al{\alpha}       
\def\be{\beta}        \def\eps{\varepsilon}
\def\ga{\gamma}       
\def\la{\lambda}      \def\ka{\kappa}
\def\vi{\varphi}      
\def\si{\sigma}       \def\om{\omega}
\def\mA{\mathbb A} \def\mN{\mathbb N}
 \def\mP{\mathbb P}
\def\mL{\mathbb L} 
 \def\mZ{\mathbb Z}
\def\aK{\mathbbm k}
 \def\gP{\mathfrak p}
 \def\gQ{\mathfrak q}
 \def\gR{\mathfrak r}
\def\gM{\mathfrak m}
\def\dM{\mathfrak M} 
\def\kA{\mathcal A} \def\kN{\mathcal N}
\def\kB{\mathcal B} \def\kO{\mathcal O}
\def\kC{\mathcal C} \def\kP{\mathcal P}
\def\kD{\mathcal D} \def\kQ{\mathcal Q}
\def\kE{\mathcal E} \def\kR{\mathcal R}
\def\kF{\mathcal F} \def\kS{\mathcal S}
\def\kG{\mathcal G} \def\kT{\mathcal T}
\def\kH{\mathcal H} \def\kU{\mathcal U}
\def\kI{\mathcal I} \def\kV{\mathcal V}
\def\kJ{\mathcal J} 
\def\kK{\mathcal K} 
\def\kL{\mathcal L} 
\def\kM{\mathcal M}
\def\bC{\mathbf C}
\def\bL{\mathbf L}
\def\fA{\mathbf a} \def\fN{\mathbf n}
\def\sB{\mathsf B} 
\def\sD{\mathsf D} 
 \def\sR{\mathsf R}
\def\sF{\mathsf F} 
\def\sG{\mathsf G} 
\def\sH{\mathsf H} 
\def\sJ{\mathsf J} 
\def\sL{\mathsf L}
\def\rA{\mathrm A}
 \def\rR{\mathrm R}
\def\rL{\mathrm L}
\def\cA{\mathscr A} 
\def\cB{\mathscr B} 
\def\cC{\mathscr C} 
\def\cD{\mathscr D}
 \def\cT{\mathscr T}
\def\cH{\mathscr H\hskip-3pt\mathit o}
\def\qA{{\mbox{\boldmath$A$}} }
\def\qB{{\boldsymbol B}} \def\qO{{\boldsymbol O}}
\def\qC{{\boldsymbol C}} 
\def\qD{{\boldsymbol D}} 
 \def\qR{{\boldsymbol R}}
 \def\qS{{\boldsymbol S}}
\def\qH{{\boldsymbol H}} 
\def\qI{{\boldsymbol I}} 
\def\qK{{\boldsymbol K}}
\def\okB{{\bar{\kB}}}	
	\def\oqA{{\boldsymbol{\bar{A}}}}
	\def\okA{\bar\kA}
\def\tkA{\tilde{\kA}}
\def\tsG{\tilde{\sG}}		\def\tLa{\tilde{\varLambda}}
		\def\tkP{\ti\kP}
\def\hqH{\boldsymbol{\hat{H}}}
\def\ukP{\underrightarrow{\mathcal{P}}}
\def\bde{\boldsymbol{\varDelta}}
\def\osG{\bar\sG}	\def\osF{\bar\sF}
\def\tsF{\tilde{\mathsf F}}
\def\tsH{\tilde{\mathsf H}}  \def\tsG{\tilde{\mathsf G}}
	\def\tkT{\tilde{\kT}}
	\def\omL{\overline{\mL}}
\def\8{\infty}		\def\0{\emptyset}
\def\mt{\mbox{-}}
\def\lb{\textup{(}} 	\def\rb{\textup{)}}
\def\bap{\bigcap}        \def\bup{\bigcup}
\def\sb{\subset}         \def\sp{\supset}
\def\spe{\supseteq}      \def\sbe{\subseteq}
\def\xx{\times}			\def\+{\oplus}
\def\*{\otimes}			\def\bop{{\textstyle\bigoplus}}
\def\cop{\textstyle{\bigsqcup}}
\def\mps{\mapsto}		\def\ti{\tilde}
\def\xarr{\xrightarrow}
\def\Arr{\Rightarrow}
\def\til{\hskip-1pt\tilde{\phantom{a}}}
\def\1{\mathbbm 1}
\def\cl{_\mathrm{cl}}
\def\LF{\mathsf{LF}}	
	\def\RF{\mathsf{RF}}
\def\RH{\mathsf{RH}}	\def\DG{\mathsf{DG}}
\def\gdim{\mathop\mathrm{gl.dim}\nolimits}
\def\idim{\mathop\mathrm{inj.dim}\nolimits}
\def\pdim{\mathop\mathrm{pr.dim}\nolimits}
\def\kdim{\mathop\mathrm{Kr.dim}\nolimits}
\def\lpdim{\mathop\mathrm{lp.dim}\nolimits}
\def\ob{\mathop\mathrm{Ob}\nolimits}
\def\add{\mathop\mathrm{add}\nolimits}
\def\Add{\mathop\mathrm{Add}\nolimits}
\def\per{\mathsf{Perf}}	
\def\hom{\mathop\mathrm{Hom}\nolimits}
\def\rhom{\mathop\mathrm{RHom}\nolimits}
\def\ext{\mathop\mathrm{Ext}\nolimits}
\def\End{\mathop\mathrm{End}\nolimits}
\def\con{\mathop\mathrm{Cone}\nolimits}
\def\ann{\mathop\mathrm{ann}\nolimits}
\def\ass{\mathop\mathrm{Ass}\nolimits}
\def\trs{\mathop\mathrm{tors}\nolimits}
\def\Hom{\mathop{\mathcal{H}\!\mathit{om}}\nolimits}
\def\rHom{\mathop{\mathrm{R}\mathcal{H}\!\mathit{om}}\nolimits}
\def\Ext{\mathop{\mathcal{E}\!\mathit{xt}}\nolimits}
\def\tf{\mathop\mathsf{tf}\nolimits}
\def\tors{\mathop\mathsf{tors}\nolimits}
\def\ker{\mathop\mathrm{Ker}\nolimits}
\def\im{\mathop\mathrm{Im}\nolimits}
\def\cok{\mathop\mathrm{Cok}\nolimits}
\def\rad{\mathop\mathrm{rad}\nolimits}
\def\qoh{\mathop\mathsf{Qcoh}\nolimits}
\def\lp{\mathop\mathsf{lp}\nolimits}
\def\spec{\mathop\mathrm{Spec}\nolimits}
\def\END{\mathop{\mathcal{E}\!\mathit{nd}}\nolimits}
\def\Mat{\mathop\mathrm{Mat}\nolimits}
\def\supp{\mathop\mathrm{supp}\nolimits}
\def\cen{\mathop\mathrm{cen}\nolimits}
\def\kinj{\mathop\mathsf{K\mbox{-}inj}\nolimits}
\def\cm{\mathop\mathrm{CM}\nolimits}
\def\md{\mbox{-}\mathsf{mod}}
\def\Md{\mbox{-}\mathsf{Mod}}
\def\inj{\mbox{-}\mathsf{Inj}}
\def\Inj{^\mathsf{Inj}}
\def\op{^\mathrm{op}}
\def\tl{\stackrel{\scriptstyle \sL}{\otimes}}
\def\ito{\stackrel{\sim}{\to}}
\def\cir{\mathop{\stackrel{\scriptscriptstyle\circ}{}}}
\def\ch{^{\scriptscriptstyle\vee}}			\def\sh{^\sharp}
\def\sd{^{\scriptscriptstyle\bullet}}
\def\bu{{\scriptscriptstyle\bullet}}
\def\smtr#1{\left(\begin{smallmatrix}#1\end{smallmatrix}\right)}
\def\set#1{\left\{\,#1\,\right\}}
\def\setsuch#1#2{\left\{\,#1\mid #2\,\right\}}
\def\gnr#1{\langle\,#1\,\rangle}
\def\lst#1#2{ #1_1 , #1_2 , \dots , #1_{#2} }
\def\row#1#2{( #1_1 , #1_2 , \dots , #1_{#2} )}
\def\mtr#1{\begin{pmatrix}#1\end{pmatrix}}
\def\={\setminus}
\def\comp{\complement}
\def\sm{^{\mathrm{c}}}
\def\dlim{\varinjlim}
\def\TF{torsion free}
\def\ncs{non-com\-mu\-ta\-tive scheme}
\def\ncc{non-com\-mu\-ta\-tive curve}
\def\ncss{non-com\-mu\-ta\-tive schemes}
\def\nccs{non-com\-mu\-ta\-tive curves}
\def\qch{quasi-coherent}
\def\cht{coherent}
\def\qis{quasi-iso\-morphism}
\def\qhr{quasi-hereditary}
\def\iff{if and only if }
\def\lop{locally projective}
\def\flop{locally projective and locally finitely generated}
\def\gor{strong\-ly Goren\-stein}
\def\meq{Morita equivalent}
\def\meqc{Morita equivalence}
\def\CM{Cohen-Macaulay}
\def\dvr{discrete valuation ring}
\begin{document}
\bibliographystyle{plainnat}

\title{Minors and resolutions of non-commutative schemes}

\author[I.\,Burban, Y.\,Drozd, V.\,Gavran]{Igor Burban    \and Yuriy Drozd
        \and Volodymyr Gavran  }
\address{Mathematisches Institut, 
Universit\"at zu K\"oln,
Weyertal 86-90,
 K\"oln, D-50931, Germany.}
\address{Institute of Mathematics,
National Academy of Sciences of Ukraine,
Tereschenkivska str. 3,\\
 Kyiv, 01601, Ukraine.}
\email{burban@math.uni-koeln.de}
\email{y.a.drozd@gmail.com,\,drozd@imath.kiev.ua}
\email{vlgvrn@gmail.com}

\begin{abstract}
In this article we develop the theory of minors of non-commutative schemes. This study is motivated by applications in the theory of non-commutative resolutions of singularities of commutative schemes. In particular, we construct a categorical resolution for \nccs\ and in the
rational case show that it can be realized as the derived category of a quasi-hereditary algebra.
\end{abstract}

\keywords{Derived categories \and bilocalization \and non-commutative schemes \and minors}
\subjclass{14F05, 14A22}

\maketitle

\section{Introduction}

Let $B$ be a ring and $P$ be a finitely generated projective left $B$-module. We call the
 ring $A = B_P = \bigl(\mathrm{End}_B P\bigr)^{\mathrm{op}}$ a \emph{minor} of $B$. It turns out that 
 the module categories of $B$ and $A$ are closely related. 

\begin{enumerate}
\item The functors $\sF = P \otimes_A\_$ and $\sH = \hom_A(P^\vee,\_)$ from $A\Md$ to
$B\Md$ are fully faithful, where $P^\vee = \hom_B(P, B)$. In other words, $A\Md$ can be realized in two different ways as a 
full subcategory of $B\Md$, see Theorem~\ref{FG}.

\item The functor $\sG = \hom_B(P,\_): B\Md \longrightarrow A\Md$ is exact and essentially surjective. Moreover, we have adjoint pairs $(\sF, \sG)$ and $(\sG, \sH)$. In other words, $\sG$ is a \emph{bilocalization functor}. If $$I = I_P = \mathrm{Im}\bigl(P \otimes_A P^\vee \longrightarrow B\bigr)$$ and $\bar{B} = B/I$ then
    the category $\bar{B}\Md$ is the kernel of $\sG$ and $A\Md$ is equivalent to the Serre
    quotient of $B\Md$ modulo $\bar{B}\Md$, see Theorem \ref{FG}.\textit{2}.
    
\item Under certain additional assumptions one can show that the global dimension of $B$ is finite provided the global dimensions
of $A$ and $\bar{B}$ are finite, see Lemma \ref{gdim}.
\end{enumerate}
The described  picture becomes even better when we pass to the (unbounded) derived categories
$\cD\bigl(A\Md\bigr)$, $\cD\bigl(B\Md\bigr)$ and $\cD\bigl(\bar{B}\Md\bigr)$
of the rings $A, B$ and $\bar{B}$ introduced above. Let $\mathsf{DG}$ be the derived functor of $\sG$, $\mathsf{LF}$ be the left derived functor of $\sF$ and
$\mathsf{RH}$ be the right derived functor of $\sH$.
\begin{enumerate}
\item  Then we have adjoint pairs $(\mathsf{LF}, \mathsf{DG})$ and $(\mathsf{DG}, \mathsf{RH})$, the functors $\mathsf{LF}$ and  $\mathsf{RH}$ are fully faithful and the category
$\cD\bigl(A\Md\bigr)$ is equivalent to the Verdier localization of $\cD\bigl(B\Md\bigr)$
modulo its triangulated subcategory $\cD_{\bar{B}}\bigl(B\Md\bigr)$ consisting of complexes with cohomologies from
$\bar{B}\Md$, see Theorem \ref{DFG}.
\item Moreover, we have a semi-orthogonal decomposition
$$
\cD\bigl(B\Md\bigr) = \bigl\langle \cD_{\bar{B}}\bigl(B\Md\bigr), \, \cD\bigl(A\Md\bigr)
\bigr\rangle,
$$
see Corollary~\ref{14}.
\end{enumerate}
One motivation to deal with minors comes from the theory of non-commu\-ta\-tive crepant resolutions. Let $A$
be a commutative normal Gorenstein domain  and $F$ be a reflexive $A$--module such that the ring
\begin{equation*}
B =  B_F: = \End_A(A \oplus F)^{\mathrm{op}} =
\left(
\begin{array}{cc}
A & F \\
F^{\vee} & E
\end{array}
\right),
\end{equation*}
where $E = \bigl(\End_A F\bigr)^{\mathrm{op}}$, is maximal Cohen--Macaulay over $A$ and of finite global dimension. Van den Bergh suggested to view
$B$ as a \emph{non-commutative crepant resolution} of $A$ showing that under some additional assumptions,
the existence of a non-commutative crepant resolution implies the existence of a commutative one \citep{vdb}. If we take
the idempotent
$
e = \left(\begin{smallmatrix} 1 & 0 \\ 0 & 0\end{smallmatrix}\right) \in B
$
and pose $P = Be$ then it is easy to see that $A = B_P$. Thus, dealing with non-commutative (crepant) resolutions of singularities, we naturally come
into the framework of the theory of minors.

In \citep{dg} it was observed that there is a close relation between coherent sheaves over the nodal cubic
$C = V(zy^2 - x^3 - x^2z) \subset \mathbb{P}^2$ and representations of  the finite dimensional 
algebra $\Lambda$ given by the quiver with relations
\begin{equation*}
\xymatrix{
\bullet   \ar@/^/[r]^{\alpha_1} \ar@/_/[r]_{\alpha_2}  & \bullet \ar@/^/[r]^{\beta_1} \ar@/_/[r]_{\beta_2} & \bullet
} \quad \beta_1 \alpha_1 =  \beta_2 \alpha_2 = 0.
\end{equation*}
An explanation of this fact was given in \citep{bd}. Let  $\kI$ be the ideal sheaf of the singular point of $C$ and $\kA = \END_{C}(\kO \oplus \kI)$. Consider the ringed space $(C, \kA)$ and the category $\kA\md$  of coherent left $\kA$--modules on $C$. 
The the derived category $\cD^b\bigl(\kA\md\bigr)$  has a tilting complex, whose (opposite) endomorphism algebra is isomorphic to $\Lambda$ what implies that the categories 
$\cD^b\bigl(\kA\md\bigr)$ and $\cD^b\bigl(\Lambda\md)$ are equivalent. On the other hand, the
triangulated category $\per(C)$ of \emph{perfect complexes} on $C$ is equivalent to a full subcategory of $\cD^b\bigl(\kA\md\bigr)$. In fact, we deal  here with a sheaf-theoretic version of the construction of minors: the commutative scheme  $(C, \kO)$ is a minor of the non-commutative
scheme $(C, \kA)$. The goal of this article is to establish a general framework for the theory of minors of non-commutative schemes.

In Section \ref{s1} we review some key results on localizations of abelian and triangulated categories used in this article.
In Section \ref{s2} we discuss the theory of non-commutative schemes, elaborating in particular a  proof of the result  characterizing the triangulated category $\per(\kA)$ of perfect complexes over a non-commutative scheme
$(X, \kA)$ as the category of \emph{compact objects} of the unbounded derived category of quasi-coherent sheaves
$\cD(\kA)$ (Theorem \ref{perf}).  Section \ref{s3} is devoted to the definition of a minor $(X, \kA)$ of a non-commutative scheme $(X, \kB)$ and the study of relations between $(X, \kA)$ and $(X, \kB)$. In Section~\ref{3a} we introduce the notion of \emph{quasi-hereditary} \ncss,
which generalizes the notions of quasi-hereditary semiprimary rings \citep{cps,dr} and quasi-hereditary orders \citep{ko1} and study
their properties. In Section \ref{s4} we elaborate the theory of \emph{strongly Gorenstein} non-commutative schemes. 
Section \ref{s5} deals with non-commutative curves. In particular, we study here hereditary non-commutative curves. In the final 
Section \ref{konig}, as an application of the elaborated technique, we construct a categorical resolution for any (reduced) \ncc\
(Theorem~\ref{k1}). We call it \emph{K\"onig resolution}, since it is an analogue of the construction proposed by K\"onig \citep{ko2}.
  If this curve is rational, we construct a tilting complex, which shows that this categorical resolution can be realized as the derived category
  of modules over a finite dimensional quasi-hereditary algebra (Theorem~\ref{tilt}). In particular, it gives an estimate of the Rouquier 
  dimension of the perfect derived category of coherent sheaves over a \ncc\ (Corollary~\ref{ti2}). For ``usual'' (commutative) curves
  this result is contained in \citep{bdg}.

\section{Bilocalizations}
\label{s1}

 Recall that a full subcategory $\cC$ of an abelian category $\cA$ is said to be \emph{thick} (or 
 \emph{Serre subcategory}) if, for any exact sequence $0\to C'\to C\to C''\to 0$, the object $C$ belongs
 to $\cC$ \iff both $C'$ and $C''$ belong to $\cC$. Then the \emph{quotient category} $\cA/\cC$ is defined
 and we denote by $\Pi_\cC$ the natural functor $\cA\to\cA/\cC$. It is exact, essentially surjective and 
 $\ker\Pi_\cC=\cC$. For instance, if $\sG:\cA\to\cB$ is an exact functor among abelian categories,
 its kernel $\ker\sG$ is a thick subcategory of $\cA$ and $\sG$ factors as $\bar\sG\cir\Pi_{\ker\sG}$,
 where $\bar\sG:\cA/\ker\sG\to\sB$.
 
 Analogously, if $\cC$ is a full subcategory of a triangulated category $\cA$, it is said to be 
 \emph{thick} if it is triangulated (i.e. closed under shifts and cones) and closed under taking direct
 summands. Then the \emph{quotient triangulated category} $\cA/\cC$ is defined and we denote by 
 $\Pi_\cC$ the natural functor $\cA\to\cA/\cC$. It is exact (triangulated), essentially surjective and 
 $\ker\Pi_\cC=\cC$. For instance, if $\sG:\cA\to\cB$ is an exact (triangulated) functor among 
 triangulated categories, its kernel $\ker\sG$ is a thick subcategory of $\cA$ and $\sG$ factors as 
 $\bar\sG\cir\Pi_{\ker\sG}$, where $\bar\sG:\cA/\ker\sG\to\sB$.
 
 If $\sF:\cA\to\cB$ is a functor, we denote by $\im\sF$ its \emph{essential image}, i.e. the full subcategory
 of $\cB$ consisting of objects $B$ such that there is an isomorphism $B\simeq\sF A$ for some $A\in\cA$.
 We usually use this term when $\sF$ is a full embedding (i.e. is fully faithful), so $\im\sF\simeq\cA$.
 
 We use the following well-known facts related to these notions.
 
   \begin{theorem}\label{11} 
 \begin{enumerate}
 \item Let $\cA,\cB$ be abelian categories, $\sG:\cA\to\cB$ be an exact functor which has a left adjoint \lb right adjoint\rb\ 
 $\sF:\cB\to\cA$ such that the natural morphism $\1_\cB\to \sG\cir \sF$ \lb respectively,
 $\sG\cir\sF\to\1_\cB$\rb\ is an isomorphism. Let $\cC=\ker\sG$.
	\begin{enumerate}
	 \item $\sG=\osG\cir\Pi_\cC$, where $\osG$ is an equivalence $\cA/\cC\ito\cB$ and its quasi-inverse 
	 functor is $\osF=\Pi_\cC\cir\sF$.
	 \item $\sF$ is a full embedding and its essential image $\im\sF$ coincides with the left 
	 \lb respectively, right\rb\ orthogonal subcategory of $\cC$, i.e. the full subcategory
	 \[ \hspace*{2.5em}
	 {^\perp\cC}=\setsuch{A\in\ob\cA}{\hom(A,C)=\ext^1(A,C)=0 \text{\emph{ for all }} C\in\ob\cC}
	 \] 
	\lb respectively, 
	\[  \hspace*{2.5em}
	\cC^\perp=\setsuch{A\in\ob\cA}{\hom(C,A)=\ext^1(C,A)=0 \text{\emph{ for all }} C\in\ob\cC}.)
	\]
	\item  $\cC=({^\perp\cC})^\perp$ \lb{}respectively, $\cC={^\perp(\cC^\perp)}$\,\rb.
	\item  The embedding functor $\cC\to\cA$ has a left \lb respectively, right\rb\ adjoint.
	\end{enumerate} 

\smallskip
 \item Let $\cA,\cB$ be triangulated categories, $\sG:\cA\to\cB$ be an exact \lb triangulated\rb\ functor 
  which has a left adjoint \lb right adjoint\rb\ $\sF:\cB\to\cA$ such that the natural morphism 
 $\1_\cB\to \sG\cir \sF$ \lb respectively, $\sG\cir\sF\to\1_\cB$\rb\ is an isomorphism. Let 
 $\cC=\ker\sG$. 
 \begin{enumerate}
	 \item $\sG=\osG\cir\Pi_\cC$, where $\osG$ is an equivalence $\cA/\cC\ito\cB$ and its quasi-inverse 
	 functor is $\osF=\Pi_\cC\cir\sF$.
	 \item $\sF$ is a full embedding and its essential image $\im\sF$ coincides with the left 
	 \lb respectively, right\rb\ orthogonal subcategory of $\cC$, i.e. the full subcategory%
 \footnote{\,Note that in the book \citep{ne} the notations for the orthogonal subcategories are opposite to ours. 
 The latter seems more usual, especially in the representation theory, see, for instance, \citep{ajs,gl2}. 
 In \citep{gab} the objects of the right orthogonal subcategory $\cC^\perp$ are called 
 $\cC$\emph{-closed}.}
	 \[
	 {^\perp\cC}=\setsuch{A\in\ob\cA}{\hom(A,C)=0 \text{\emph{ for all }} C\in\ob\cC}
	 \] 
	\lb respectively, 
	\[
	\cC^\perp=\setsuch{A\in\ob\cA}{\hom(C,A)=0 \text{\emph{ for all }} C\in\ob\cC}.)
	\]
	\item  $\cC=({^\perp\cC})^\perp$ \lb{}respectively, $\cC={^\perp(\cC^\perp)}$\,\rb.
	\item  The embedding functor $\cC\to\cA$ has a left \lb respectively, right\rb\ adjoint, which
	induces an equivalence $\cA/{^\perp\cC}\ito\cC$ \lb respectively, $\cA/{\cC^\perp}\ito\cC$\rb.
	\end{enumerate} 
 \end{enumerate}
 \end{theorem}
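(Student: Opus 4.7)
The two parts of the theorem are strictly parallel, so I will carry out the abelian version in detail and indicate where the triangulated version needs dualization/adjustment. Throughout I treat only the left-adjoint case $(\sF,\sG)$; the right-adjoint case is dual.

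\textbf{Step 1 (fully faithfulness of $\sF$ and statement (a)).} From the standard adjunction yoga, the hypothesis that the unit $\1_\cB\to\sG\sF$ is an isomorphism is equivalent to $\sF$ being fully faithful. Exactness of $\sG$ makes $\cC=\ker\sG$ thick, so $\sG$ factors through $\Pi_\cC$ as $\sG=\osG\cir\Pi_\cC$ with $\osG$ exact. Set $\osF=\Pi_\cC\cir\sF$. The composite $\osG\osF=\sG\sF\simeq\1_\cB$. For the other composite $\osF\osG$, the counit $\epsilon\se\sF\sG\to\1_\cA$ gives a morphism whose kernel and cokernel lie in $\cC$ (apply $\sG$ and use $\sG\epsilon$ is iso via the triangle identity and $\1\simeq\sG\sF$); therefore $\Pi_\cC$ sends $\epsilon$ to an isomorphism, so $\osF\osG\simeq\1_{\cA/\cC}$. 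This proves (a). The triangulated version is identical after replacing ``kernel/cokernel'' by ``cone'', observing that the cone of $\epsilon_A$ lies in $\cC$.

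\textbf{Step 2 (essential image $=$ orthogonal, statement (b)).} The containment $\im\sF\sbe{^\perp\cC}$: for $B\in\cB$ and $C\in\cC$, $\hom(\sF B,C)\simeq\hom(B,\sG C)=0$. For the $\ext^1$ vanishing (only needed in the abelian case), given an extension $0\to C\to X\to\sF B\to 0$, apply $\sG$ to get $\sG X\simeq B$; then the counit $\epsilon_X\se\sF\sG X\to X$ provides, via the identifications $\sF\sG X\simeq \sF B$ and a triangle-identity check, a splitting of $X\to\sF B$. In the triangulated case ${^\perp\cC}$ only requires vanishing of $\hom$, and this already follows from $\hom(\sF B,C[n])\simeq\hom(B,\sG C[n])=0$ since $\cC$ is shift-stable.

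The reverse containment ${^\perp\cC}\sbe\im\sF$ is the heart of (b). Take $A\in{^\perp\cC}$ and consider $\epsilon_A\se\sF\sG A\to A$. In the abelian case, $\cok\epsilon_A\in\cC$, so the surjection $A\twoheadrightarrow\cok\epsilon_A$ must vanish; hence $\epsilon_A$ is epi. Set $K=\ker\epsilon_A\in\cC$. The long exact sequence of $\hom(A,\_)$ applied to $0\to K\to\sF\sG A\to A\to 0$ gives $\hom(A,\sF\sG A)\ito\hom(A,A)$ because both $\hom(A,K)$ and $\ext^1(A,K)$ vanish; lifting $\1_A$ produces a splitting, forcing $K=0$, so $\epsilon_A$ is an isomorphism and $A\simeq\sF\sG A\in\im\sF$. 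In the triangulated case, complete $\epsilon_A$ to a triangle with third vertex $K_A\in\cC$; orthogonality makes $A\to K_A$ zero, the triangle splits, and the summand $\Sigma^{-1}K_A\in\cC$ must vanish by a second application of orthogonality, so $A\simeq\sF\sG A$.

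\textbf{Step 3 (statements (c) and (d)).} The inclusion $\cC\sbe({^\perp\cC})^\perp$ is tautological. Conversely, if $A\in({^\perp\cC})^\perp$ then, since $\sF B\in{^\perp\cC}$ by Step~2, $\hom(B,\sG A)\simeq\hom(\sF B,A)=0$ for every $B\in\cB$; taking $B=\sG A$ forces $\sG A=0$, so $A\in\cC$. For (d) in the abelian case, define $\rho\se\cA\to\cC$ by $\rho(A)=\cok\epsilon_A$; the exact sequence $\sF\sG A\to A\to\rho(A)\to 0$ together with $\hom(\sF\sG A,C)=0$ shows $\hom(\rho(A),C)\ito\hom(A,C)$ for all $C\in\cC$, i.e.\ $\rho$ is left adjoint to the inclusion. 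In the triangulated case, the triangle $\sF\sG A\to A\to K_A\to$ yields the same adjunction $\hom(K_A,C)\simeq\hom(A,C)$ and also gives the claimed equivalence $\cA/{^\perp\cC}\ito\cC$ by the general localization calculus applied to the thick subcategory ${^\perp\cC}=\im\sF$.

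\textbf{Main obstacle.} The substantive point is Step~2, and specifically the $\ext^1$ vanishing in the abelian setting: neither the adjunction nor exactness of $\sG$ by themselves give this, and one must exploit the triangle identity for the counit to produce an explicit splitting. Once this is in place, (c) and (d) are essentially formal, and the triangulated version only simplifies because cones replace two-step kernel/cokernel arguments.
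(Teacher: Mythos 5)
Your proof is correct in outline and takes a genuinely different route from the paper: the paper's proof of Theorem \ref{11} is essentially a list of citations (Gabriel's \emph{Des cat\'egories ab\'eliennes} for (1a), (1b); Neeman's book for (2b)--(2d); Geigle--Lenzing for (1c)), whereas you reconstruct the arguments from scratch. This is worth doing, since the hardest point --- the $\ext^1$-vanishing in (1b), where exactness of $\sG$ and the adjunction alone are not enough and one has to use the counit plus the triangle identity to produce an explicit splitting --- is exactly the step that is invisible if one only reads the citations. Your factorization argument for (1a)/(2a) via $\Pi_\cC\epsilon$ being invertible, and the one-line derivation of (1c)/(2c) from (1b)/(2b) by taking $B=\sG A$, are both clean and match what Gabriel and Neeman do under the hood.

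There is one place where you leave a real gap: in the reverse containment ${^\perp\cC}\sbe\im\sF$ you obtain a splitting of $0\to K\to\sF\sG A\to A\to 0$ and then assert that this ``forces $K=0$'' without saying why. The splitting only gives $\sF\sG A\simeq A\oplus K$; to kill $K$ you need an extra observation, for instance: $K$ is then a direct summand of $\sF\sG A$, and since ${^\perp\cC}$ is visibly closed under direct summands and you have already shown $\sF\sG A\in\im\sF\sbe{^\perp\cC}$, also $K\in{^\perp\cC}$; but $K\in\cC$ as well, so $\1_K\in\hom(K,K)=0$, i.e.\ $K=0$. (Equivalently, the retraction $\sF\sG A\to K$ lies in $\hom(\sF\sG A,K)=0$.) Notice that in your triangulated version you do spell out precisely this ``second application of orthogonality''; the abelian case needs the same sentence. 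With that sentence inserted, the proof is complete and faithfully reproduces the content of the cited results.
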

 \begin{proof}
 The statement (1a) is proved in \cite[Ch.~III,~Proposition~5]{gab} if $\sF$ is right adjoint of $\sG$. 
 The case of left adjoint is just a dualization. The proof of the statement (2a) is quite analogous.
 Therefore, from now on we can suppose that $\cB=\cA/\cC$. Then
 the statements (1b) and (2b) are just \cite[p.371, Lemma 2 et Corollaire]{gab} and 
 \cite[Theorem 9.1.16]{ne}. The statements (1c) and (2c) are \cite[Corollary 2.3]{gl2} and
 \cite[Corollary 9.1.14]{ne}. Thus the statement (2d) also follows from \cite[Theorem 9.1.16]{ne}.
 In the abelian case the left (respectively, right) adjoint $\sJ$ to the embedding $\cC\to\cA$ 
 is given by the rule $A\mps\cok\Psi(A)$ (respectively, $A\mps\ker\Psi(A)$\,), where $\Psi$ 
 is the natural morphism $\sF\cir\sG\to\1_\cA$ (respectively, $\1_\cA\to\sF\cir\sG$).
 \end{proof}
 
 \begin{remark}
 Note that in the abelian case the composition $\Pi_{^\perp\cC}\cir\sJ$ (respectively, 
 $\Pi_{\cC^\perp}\cir\sJ$) need not be an equivalence. The reason is that the subcategory 
 ${^\perp\cC}$ ($\cC^\perp$) need not be thick (see \citep{gl2}).  
 \end{remark}

 A thick subcategory $\cC$ of an abelian or triangulated category $\cA$ is said to be \emph{localizing} 
 (\emph{colocalizing}) if the canonical functor $\sG:\cA\to\cA/\cC$ has a right (respectively, left)
 adjoint $\sF$. Neeman \citep{ne} calls $\sF$ a \emph{Bousfield localization} (respectively, a 
 \emph{Bousfield colocalization}).\!%
 \footnote{\,Actually, Neeman uses this term for triangulated categories, but we will use it for 
 abelian categories too.}
  In this case the natural morphism $\sG\cir\sF\to\1_{\cA/\cC}$ 
 (respectively, $\1_{\cA/\cC}\to\sG\cir\sF$) is an isomorphism \cite[Ch.III,Proposition 3]{gab}, 
 \cite[Lemma 9.1.7]{ne}. If $\cC$ is both localizing and colocalizing, we call it 
 \emph{bilocalizing} and call the category $\cA/\cC$ (or any equivalent one) a 
 \emph{bilocalization} of $\cA$. We also say in this case that $\sG$ is a \emph{bilocalization functor}.
 In other words, an exact functor $\sG:\cA\to\cB$ is a bilocalization functor if it has both left adjoint
 $\sF$ and right adjoint $\sH$ and the natural morphisms $\1_\cB\to\sG\sF$ and $\sG\sH\to\1_\cB$ are 
 isomorphisms.
 
 \begin{corollary}\label{biloc} 
   Let $\sG:\cA\to\cB$ be an exact functor between abelian or triangulated categories which has both left
   adjoint $\sF$ and right adjoint $\sH$. In order that $\sG$ will be a bilocalization functor it is necessary
   and sufficient that one of the natural morphisms $\1_\cB\to\sG\cir\sF$ or $\sG\cir\sH\to\1_\cB$ be an isomorphism.
  \end{corollary} 
  \begin{proof}
   Let, for instance, the first of these morphisms be an isomorphism. Then there is an equivalence of
   categories $\osG:\cA/\ker\sG\ito\cB$ such that $\sG=\osG\Pi_\cC$, where $\cC=\ker\sG$. So we can suppose
   that $\cB=\cA/\cC$ and $\sG=\Pi_\cC$. Thus the morphism $\sG\sH\to\1_\cB$ is an isomorphism, since $
   \sH$ is right adjoint to $\sG$.
  \end{proof}
 
 \begin{corollary}\label{12} 
 Let $\cC$ be a localizing \lb colocalizing\rb\ thick subcategory of an abelian category $\cA$, 
 $\cD_\cC(\cA)$ be the full subcategory of $\cD(\cA)$ consisting of all complexes $C\sd$ such that 
 all cohomologies $H^i(C\sd)$ are in $\cC$. 
 Suppose that the Bousfield localization \lb{}respectively, colocalization\rb\  functor
 $\sF$ has right \lb{}respectively, left\rb\ derived functor.
 Then $\cD_\cC(\cA)$ is also a localizing \lb colocalizing\rb\ subcategory of $\cA$
 and $\cD(\cA/\cC)\simeq\cD(\cA)/\cD_\cC(\cA)$.
 \end{corollary} 
 \begin{proof}
  We consider the case of a localizing subcategory $\cC$, denote by $\sG$ the canonical functor $\cA\to\cA/\cC$
  and by $\sF$ its right adjoint. As $\sG$ is exact, it induces an exact functor $\cD(\cA)\to\cD(\cA/\cC)$
  acting on complexes componentwise. We denote it by $\DG$; it is both right and left derived of $\sG$.
  Obviously, $\ker\sD\sG=\cD_\cC(\cA)$. Since $\sG\cir\sF\to\1_{\cA/\cC}$ is an isomorphism,
  the morphism $\DG\cir\RF\to\1_{\cD(\cA/\cC)}$ is also an isomorphism, so we can apply 
  Theorem\ref{11}\,(2).
 \end{proof}
 
 \begin{remark}\label{13} 
 \begin{enumerate}
 \item   If $\cC$ is localizing and $\cA$ is a Grothendieck category, the right derived 
 functor $\RF$ exists \citep{ajs}, so $\cD(\cA/\cC)\simeq\cD_\cC(\cA)$.
   We do not know general conditions which ensure the existence 
 of the left derived functor $\LF$ in the case of colocalizing categories,
 though it exists when $\cA$ is a category of \qch\ modules over
 a quasi-compact separated \ncs\ and $\sF$ is tensor product or inverse image, see 
 Proposition~\ref{212}.
 \item  Miyatchi \citep{mi} proved that always
  $\cD^\si(\cA/\cC)\simeq\cD_\cC^\si(\cA)$, where $\si\in\set{+,-,b}$.
 \end{enumerate}
 \end{remark}

 We recall that a sequence $\row\cA m$ of triangulated subcategories of a triangulated category $\cA$ is said
 to be a \emph{semi-orthogonal decomposition} of $\cA$ if $\hom(A,A')=0$ for $A\in\cA_i,\,A'\in\cA_j$ and $i>j$,
 and for every object $A\in\cA$ there is a chain of morphisms
 \[
  0=A_m\xarr{f_m} A_{m-1}\xarr{f_{m-1}}\dots A_2\xarr{f_2} A_1\xarr{f_1} A_0=A
 \]
 such that $\con f_i\in\cA_i$ \citep{kl}. 

 \begin{corollary}\label{14} 
  Let $\sG:\cA\to\cB$ be an exact functor among triangulated categories, $\sF:\cB\to\cA$ be its right \lb left\rb\
  adjoint such that the natural morphism $\phi:\1_{\cB}\to\sG\sF$ \lb respectively, $\psi:\sG\sF\to\1_\cB$\rb\ is 
  an isomorphism. Then $(\im\sF,\ker\sG)$ \lb respectively, $(\ker\sG,\im\sF)$\rb\ is a semi-orthogonal
  decomposition of $\cA$.
 \end{corollary}
 \begin{proof}
  We consider the case of left adjoint. If $A=\sF B$ and $A'\in\ker\sG$, then 
  $\hom_\cA(A,A')\simeq\hom_\cB(\sG A,B)=0$. On the other hand, consider the natural morphism
  $f:\sF\sG A\to A$. Then $\sG f$ is an isomorhism, whence $\con f\in\ker\sG$. 
  So we can set $A_1=\sF\sG A$, $f_1=f$.
 \end{proof}

\section{Non-commutative schemes}
\label{s2}

\begin{definition}\label{21} 
\begin{enumerate}
\item  A \emph{\ncs} is a pair $(X,\kA)$, where $X$ is a scheme (called the \emph{commutative background}
 of the \ncs) and $\kA$ is a sheaf of $\kO_X$-algebras, which is \qch\ as a sheaf of $\kO_X$-modules. 
 Sometimes we say ``\ncs\ $\kA$'' not mentioning its commutative background $X$. We denote by $X\cl$
 the set of closed points of $X$.
 
\item  A \ncs\ $(X,\kA)$ is said to be \emph{affine} (\emph{separated, quasi-compact}) if so is its commutative 
 background $X$. It is said to be \emph{reduced} if $\kA$ has no nilpotent ideals.

\item A \emph{morphism} of \ncss\ $f:(Y,\kB)\to(X,\kA)$ is a pair $(f_X,f^\#)$, where $f_X:Y\to X$ is a
morphism of schemes and $f^\#$ is a morphism of $f_X^{-1}\kO_X$-algebras $f_X^{-1}\kA\to\kB$. In what
follows we usually write $f$ instead of $f_X$.

\item Given a \ncs\ $(X,\kA)$, we denote by $\kA\Md$ (respectively, by $\kA\md$) the category of \qch\ 
 (respectively, \cht) sheaves of $\kA$-modules. We call objects of this category just $\kA$-modules
 (respectively, \cht\ $\kA$-modules).
 
\item If $f:(Y,\kB)\to(X,\kA)$ is a morphism of \ncss, we denote by $f^*:\kA\Md\to\kB\Md$ the functor of
inverse image which maps an $\kA$-module $\kM$ to the $\kB$-module $\kB\*_{f^{-1}\kA}f^{-1}\kM$. If the map
$f_X$ is separated and quasi-compact, we denote by $f_*:\kB\Md\to\kA\Md$ the functor of direct image.
It follows from \cite[\S\,0.1 and \S\,1.9.2]{gro1} that these functors are well-defined. Moreover,
$f^*$ maps coherent modules to coherent ones.
\end{enumerate}

 In this paper we always suppose \ncss\ \emph{separated and quasi-compact}. 
 \end{definition} 
 
\begin{remark}\label{22}
 \begin{enumerate}
 \item  If $(X,\kA)$ is affine, i.e. $X=\spec\qR$ for some commutative ring $\qR$, then $\kA=\qA^\sim$
 is a sheafification of an $\qR$-algebra $\qA$. A \qch\ $\kA$-module is just a sheafification $M^\sim$ 
 of an $\qA$-module $M$, so $\kA\Md\simeq\qA\Md$ and we identify these categories. If, moreover, $\qA$ 
 is noetherian, then $\kA\md$ coincides with the category $\qA\md$ of finitely generated $\qA$-modules.
 
 \item  If $X$ is separated and quasi-compact, $\kA\Md$ is a \emph{Grothendieck category}. In particular,
 every \qch\ $\kA$-module has an injective envelope. We denote by $\kA\inj$ the full subcategory of $\kA\Md$
 consisting of injective modules.
 
 \item  The inverse image functor $f^*$ for a morphism of \ncss\ usually does not coincide with the inverse
 image functor $f_X^*$ with respect to the morphism of their commutative backgrounds. We can guarantee it
 if $\kB=f_X^*\kA$, for instance, if $Y$ is an open subset of $X$ and $\kB=\kA|_Y$.
 \end{enumerate}
\end{remark}
 
  \begin{definition}
  \begin{enumerate}
  \item   The \emph{center} of $\kA$ is the subsheaf $\cen\kA\sbe\kA$ such that 
 $$(\cen\kA)(U)=\setsuch{\al\in\kA(U)}{\al|_V\in\cen\kA(V) \text{ for all } V\sbe U},$$ 
 where $\cen\qA$ denotes the center of a ring $\qA$.   
 
 \item    We say that a \ncs\ $(X,\kA)$ is \emph{central}, if the natural homomorphism $\kO_X\to\kA$ maps $\kO_X$
 bijectively onto the center $\cen(\kA)$ of $\kA$.
  \end{enumerate}
 \end{definition}
 
 Note that if $(X,\kA)$ is affine, $X=\spec\qR$ and $\kA=\qA^\sim$, then $\cen\kA=(\cen\qA)^\sim$.
 
 \begin{proposition}\label{center} 
  $\End\1_{\kA\Md}\simeq\End\1_{\kA\inj}\simeq\Ga(X,\cen\kA)$.
 \end{proposition}
 \begin{proof}
  Let $\al\in\Ga(X,\cen\kA)$. Given any $\kM\in\kA\Md$, define $\al(\kM):\kM\to\kM$ by the rule:
  $\al(M)(U):\kM(U)\to\kM(U)$ is the multiplication by $\al|U$ for every open $U\sbe X$. 
  Obviously, it is a morphism of $\kA$-modules. Moreover, if $f\in\hom_\kA(\kM,\kN)$, one easily sees 
  that $f\al(M)=\al(N)f$, so $\al$ defines an element from $\End\1_{\kA\Md}$.
  
  Conversely, let $\la\in\End\1_{\kA\Md}$. Let $U\sbe X$ be an open subset, $j:U\to X$ be the embedding. 
  Then $\la(U)=\la(j_*j^*\kA)$ is an element from $\End_\kA(j_*j^*\kA)=\kA(U)$. Since $\la$ is an endomorphism of the identity functor,
  $\la(U)$ it is in $\cen\kA(U)$. Moreover, if $V\sbe U$ is another open subset, $j':V\to X$ is the embedding, the restriction
  homomorphism $r:j_*j^*\kA\to j'_*{j'}^*\kA$ gives the commutative diagram
  \[
   \xymatrix{ j_*j^*\kA \ar[r]^{\la(U)} \ar[d]^r & j_*j^*\kA \ar[d]^r \\
   			  j'_*{j'}^*\kA \ar[r]^{\la(V)} & j'_*{j'}^*\kA  }
  \]
  It implies that $\la(V)=\la(U)|V$. In particular, $\la(X)=\al$ is an element from $\Ga(X,\cen\kA)$ and
  $\la(U)$ coincides with the multiplication by $\al|U$. Thus we obtain an isomorphism 
  $\End\1_{\kA\Md}\simeq\Ga(X,\cen\kA)$.
  
  There is the restriction map $\End\1_{\kA\Md}\to\End\1_{\kA\inj}$. On the other hand, consider an injective
  copresentation of an $\kA$-module $\kM$, i.e. an exact sequence $0\to\kM\xarr{\al_\kM}\kI_\kM\to\kI'_\kM$ 
  with injective modules $\kI_\kM$ and $\kI'_\kM$. Let $\la\in\End\1_{\kA\inj}$. Then there is a unique homomorphism 
  $\la(\kM):\kM\to\kM$ such that $\la(\kI_\kM)\al_\kM=\al_\kM\la(\kM)$. Let $0\to\kN\xarr{\al_\kN}\kI_\kN\to\kI'_\kN$ 
  be an injective copresentation of another $\kA$-module $\kN$ and $f\in\hom_\kA(\kM,\kN)$. Extending $f$ to 
  injective copresentations, we obtain a commutative diagram
  \[
   \xymatrix{0 \ar[r] &\kM \ar[r]^{\al_\kM} \ar[d]^f & \kI_\kM \ar[r] \ar[d]^{f_0} & \kI'_\kM \ar[d]_{f_1} \\
   	0 \ar[r] &\kN \ar[r]^{\al_\kN}  & \kI_\kN \ar[r]  & \kI'_\kN  }
  \]
  It implies that
  \begin{multline*}
   \al_\kN\la(\kN)f=\la(\kI_\kN)\al_\kN f=\la(\kI_\kN)f_0\al_\kM=\\=f_0\la(\kI_\kM)\al_\kM
   		=f_0\al_\kM\la(\kM)=\al_\kN f\la(\kM),
  \end{multline*}
  whence $\la(\kN)f=f\la(\kM)$, so we have extended $\la$ to a unique endomorphism of $\1_{\kA\Md}$.
 \end{proof}
 
  \begin{proposition}\label{23} 
 Let $\kC=\cen(\kA)$, $X'=\spec\kC$ be the spectrum of the \lb commutative\rb\ $\kO_X$-algebra
 $\kC$, $\phi:X'\to X$ be the structural morphism, and $\kA'=\phi^{-1}\kA$. 
\begin{enumerate}
 \item $\kA'$ is an $\kO_{X'}$-algebra, so $(X',\kA')$ is a central \ncs. 
 
 \item For any $\kF\in\kA\Md$ the natural map $\kF\to\phi_*\phi^*\kF$ is an isomorphism.\!%
 \footnote{\,Note that in this situation $\phi^*=\phi^{-1}$.}

 \item For any $\kF'\in\kA'\Md$ the natural map $\phi^*\phi_*\kF'\to\kF'$ is an isomorphism.
 
 \item The functors $\phi^*$ and $\phi_*$ establish an equivalence of the
 categories $\kA\Md$ and $\kA'\Md$ as well as of $\kA\md$ and $\kA'\md$.
 \end{enumerate}   
 \emph{Thus, when necessary, we can suppose, without loss of generality, that our \ncss\ are central.}
 \end{proposition}
 \begin{proof}
  All claims are obviously local, so we can suppose that $X=\spec\qR$ and $X'=\spec\qR'$, where $\qR'$
  is the center of the $\qR$-algebra $\qA=\Ga(X,\kA)$. Then all claims are trivial.
 \end{proof}

 We call a \ncs\ $(X,\kA)$ \emph{noetherian} if the scheme $X$ is noetherian and $\kA$ is
 \cht\ as a sheaf of $\kO_X$-modules. Note that if $(X,\kA)$ is noetherian, the central 
 \ncs\ $(X',\kA')$ constructed in Proposition~\ref{23} is also noetherian. Indeed, if an 
 affine \ncs\ $(\spec\qR,\qA\til)$ is noetherian, then $\qA$ is a \emph{noetherian algebra}, i.e. 
 $\qC=\cen\qA$ is noetherian and $\qA$ is a finitely generated $\qC$-module. 
 
 \begin{definition}\label{24} 
  Let $(X,\kA)$ be noetherian.
\begin{enumerate}
  \item   We denote by $\lp\kA$ the full subcategory of $\kA\md$ consisting of
  \emph{\lop} modules $\kP$, i.e. such that $\kP_x$ is a projective $\kA_x$-module for every $x\in X$.
  
  \item We say that $\kA$ \emph{has enough \lop\ modules} if for every \cht\ $\kA$-module $\kM$ there is
  an epimorphism $\kP\to\kM$, where $\kP\in\lp\kA$. Since every \qch\ module is a sum of its \cht\
  submodules, then for every \qch\ $\kA$-module $\kM$ there is an epmorphism $\kP\to\kM$, where $\kP$
  is a coproduct of modules from $\lp\kA$.
\end{enumerate}    
\end{definition}

 An important example arises as follows. We say that a noetherian \ncs\ $(X,\kA)$ is \emph{quasi-projective}
 if there is an ample $\kO_X$-module $\kL$ \cite[Section~4.5]{gro2}. Note that in this case $X$ is indeed 
 a quasi-projective scheme over the ring $R=\bop_{n=0}^\8\Ga(X,\kL^{\*n}$).
 
 \begin{proposition}\label{25} 
  Every quasi-projective \ncs\ $(X,\kA)$ has enough \lop\ modules.
 \end{proposition}
  \begin{proof}
 Let $\kL$ be an ample $\kO_X$-module, $\kM$ be any \cht\ $\kA$-module. There is an epimorphism
 of $\kO_X$-modules $n\kO_X\to\kM\*_{\kO_X}\kL^{\*m}$ for some $m$, hence also an epimorphism
 $\kF=n\kL^{\*(-m)}\to\kM$. Since $\hom_\kA(\kA\*_{\kO_X}\kF,\kM)\simeq\hom_{\kO_X}(\kF,\kM)$, 
 it gives an epimorphism of $\kA$-modules $\kA\*_{\kO_X}\kF\to\kM$, where $\kA\*_{\kO_X}\kF\in\lp\kA$.
 \end{proof}
 
 We define an \emph{invertible $\kA$-module} as an $\kA$-module $\kI$ such that $\End_\kA\kI\simeq\kA\op$
 and the natural map $\Hom_\kA(\kI,\kA)\*_\kA\kI\to(\End_\kA\kI)\op\simeq\kA$ is an isomorphism.
 For instance, the modules constructed in the preceding proof are direct sums of invertible modules. On the contrary, one
 easily proves that, if $\kA$ is noetherian and $\cen\kA=\kO_X$, any invertible $\kA$-module
 $\kI$ is isomorphic to $\kA\*_{\kO_X}\kL$, where $\kL=\Hom_{\kA\mt\kA}(\kI,\kI)$ and $\kL$ is an
 invertible $\kO_X$-module. (We will not use this fact.)
 
\medskip
 We denote by $\cC\kA$ the category of complexes of $\kA$-modules, by $\cH\kA$ the category of complexes modulo homotopy
 and by $\cD\kA$ the derived category $\cD(\kA\Md)$. We also use the conventional notations $\cC^\si\kA$ $\cH^\si\kA$ 
 and $\cD^\si\kA$, where $\si\in\set{+,-,b}$. We denote by $\cD\sm\kA$ the full subcategory of \emph{compact} objects
 $\kC\sd$ from $\cD\kA$, i.e. such that the natural morphism 
 \[
 \cop_i\hom_{\cD\kA}(\kC\sd,\kF\sd_i)\to\hom_{\cD\kA}(\kC\sd,\cop_i\kF\sd_i)
 \]
 is bijective for any coproduct $\cop_i\kF\sd_i$. 

  Recall that a complex $\kI\sd$ is said to be \emph{K-injective} \citep{sp} if for every acyclic complex
 $\kC\sd$ the complex $\hom\sd(\kC\sd,\kI\sd)$ is acyclic too. We denote by $\kinj\kA$
 the full subcategory of $\cH\kA$ consisting of K-injective complexes and by $\kinj_0\kA$ its
 full subcategory consisting of acyclic K-injective complexes.
 
 \begin{proposition}\label{28} 
 Let $(X,\kA)$ be a \ncs\ \lb separated and quasi-compact\rb.
 \begin{enumerate}
 \item  For every complex $\kC\sd$ in $\cC\kA$ there is a \emph{K-injective resolution}, i.e. 
 a K-injective complex $\kI\sd\in\cC\kA$ together with a \qis\ $\kC\sd\to\kI\sd$.
 
 \item  $\cD\kA\simeq\kinj\kA/\kinj_0\kA$.
 \end{enumerate} 
 \end{proposition}
 \begin{proof}
   As the category $\kA\Md$ is a Grothendieck category, (1) follows immediately from 
 \cite[Theorem~5.4]{ajs} (see also \cite[Lemma 3.7 and Proposition 3.13]{sp}).
 Then (2) follows from \cite[Proposition~1.5]{sp}.
 \end{proof} 
 
 A complex $\kF\sd$ is said to be \emph{K-flat} \citep{sp} if for every acyclic complex $\kS\sd$ 
 of \emph{right} $\kA$-modules the complex $\kF\sd\*_\kA\kS\sd$ is acyclic. The next result is quite 
 analogous to \cite[Proposition~1.1]{ajl} and the proof just repeats that of the cited paper with no
 changes.
 
 \begin{proposition}\label{29} 
 Let $(X,\kA)$ be a \ncs. Then for every complex $\kC\sd$ in $\cC\kA$ there is a
 \emph{K-flat replica}, i.e. a K-flat complex $\kF\sd$ quasi-isomorphic to $\kC\sd$.
 \end{proposition}
 
 \begin{remark}\label{210} 
 If $(X,\kA)$ is noetherian and has enough \lop\ modules, every complex from $\cC^-\kA$ has 
 a \lop\ (hence flat) resolution. Then \cite[Theorem~3.4]{sp} implies that for every complex 
 $\kC$ from $\cC\kA$ there is an \emph{Lp-resolution}, i.e. a K-flat complex
 $\kF\sd$ consisting of \lop\ modules together with a \qis\ $\kF\sd\to\kC\sd$. 
 For instance, it is the case if $(X,\kA)$ is \emph{quasi-projective} (Proposition~\ref{25}).
 \end{remark}

\smallskip
 A complex $\kI\sd$ is said to be \emph{weakly K-injective} if for every acyclic K-flat complex
 $\kF\sd$ the complex $\hom\sd(\kF\sd,\kI\sd)$ is exact. 
 
 \begin{proposition}{\rm\citep[Propositions~5.4~and~5.15]{sp}}\label{211} \
 Let $f:(X,\kA)\to(Y,\kB)$ be a morphism of \ncs.
 \begin{enumerate}
  \item  If $\kF\sd\in\cC\kB$ is K-flat, then so is also
 $f^*\kF\sd$. If, moreover, $\kF\sd$ is K-flat and acyclic, then $f^*\kF\sd$ is acyclic too.
 
 \item  If $\kI\in\cC\kA$ is weakly K-injective, then $f_*\kI$ is weakly K-injective. If, moreover,
 $\kI$ is weakly K-injective and acyclic, then $f_*\kI$ is acyclic too.
  \end{enumerate} 
 \end{proposition}
 
 \begin{proposition}{\rm\citep[Section~6]{sp}}\label{212} \
 Let $(X,\kA)$ be a \ncs.
 \begin{enumerate}
 \item  The derived functors $\rhom_\kA\sd(\kF\sd,\kG\sd)$ 
 and $\rHom_\kA\sd(\kF\sd,\kG\sd)$ exist and can be calculated using a K-injective resolution
 of $\kG\sd$ or a weakly K-injective resolution of $\kG\sd$ and a K-flat replica of $\kF\sd$.
 
 \item The derived functor $\kF\sd\tl_\kA\kG\sd$, where $\kG\sd\in\cD\kA\op$,
 exists and can be calculated using a K-flat replica either of $\kF\sd$ or of $\kG\sd$.
 Moreover, if $\kG\sd$ is a complex of $\kA\mt\kB$-bimodules, where $\kB$ is another sheaf of 
 $\kO_X$-algebras, there are isomorphisms of functors
 \begin{align*}
  \rhom_\kB(\kF\sd\tl_\kA\kG\sd,\kM\sd)&\simeq \rhom_\kA(\kF\sd,\rHom_\kB(\kG\sd,\kM\sd))\\
  \rHom_\kB(\kF\sd\tl_\kA\kG\sd,\kM\sd)&\simeq \rHom_\kA(\kF\sd,\rHom_\kB(\kG\sd,\kM\sd)). 
 \end{align*}
 
 \item For every morphism $f:(X,\kA)\to(Y,\kB)$ the derived functors $\rL f^*:\cD\kB\to\cD\kA$ and
 $\rR f_*:\cD\kA\to\cD\kB$ exist. They can be calculated using, respectively,
 K-flat replicas in $\cC\kB$ and weakly K-injective resolutions in $\cC\kA$.
  Moreover, there are isomorphisms of functors
 \begin{align*}
   \rhom\sd_\kB(\kF\sd,\rR f_*\kG\sd)&\simeq\rhom\sd_\kA(\rL f^*\kF\sd,\kG\sd)\\
  \rHom\sd_\kB(\kF\sd,\rR f_*\kG\sd)&\simeq\rR f_*\rHom\sd_\kA(\rL f^*\kF\sd,\kG\sd).  
 \end{align*}
 
 \item If $g:(Y,\kB)\to(Z,\kC)$ is another morphism of \ncss, then 
 $\rL(gf)^*\simeq\rL f^*\cir\rL g^*$ and
 $\rR(gf)_*\simeq\rR g_*\cir\rR f_*$.
 \end{enumerate}
 If the considered \ncss\ have enough \lop\ modules \lb for instance, 
 are quasi-projective\rb, one can replace in these statements K-flat replicas by Lp-resolutions.
 \end{proposition}
 
 In particular, let $f:\qA\to\qB$ be a homomorphism of rings. We consider $\qB$ as an algebra over
 a subring $\qS$ (an arbitrary one) of its center and $\qA$ as an algebra over a subring 
 $\qR\sbe\cen\qA\cap f^{-1}(\qS)$. Then we can identify $f$ with its sheafification
 $f\til:(\spec\qS,\qB\til)\to(\spec\qR,\qA\til)$. In this context the functors $(f\til)^*$ and
 $(f\til)_*$ are just sheafifications, respectively, of the ``back-up'' functor ${_BM}\mps{_AM}$ 
 and the ``change-of-scalars'' functor ${_AN}\mps {_BB\*_AN}$. 
 
 \begin{definition}\label{perf-def} 
  A complex $\kC\sd$ in $\cC\kA$ is said to be \emph{perfect} if for every point $x\in X$ there is an open
  neighbourhood $U$ of $x$ such that $\kC|_U$ is quasi-isomorphic to a finite complex of \lop\ coherent modules.
  We denote by $\per\kA$ the full subcategory of $\cD\kA$ consisting of perfect complexes.
 \end{definition} 
 
 The following result is well-known in commutative and affine cases \citep{ne2,rou}. Though the proof in
 non-commutative situation is almost the same, we include it for the sake of completeness. Actually, we reproduce
 the proof of Rouquier with slight changes.
 
 \begin{theorem}\label{perf} 
 Let $(X,\kA)$ be a \ncs\ \lb quasi-compact and separated\rb. Then $\cD\kA$ is compactly generated and 
 $\cD\sm\kA=\per\kA$.
 \end{theorem}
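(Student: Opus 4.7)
The plan is to adapt the Rouquier--Neeman strategy~\cite{rou,ne2} to the non-commutative setting by induction on the cardinality $n$ of a finite affine open cover of $X$, which exists because $X$ is quasi-compact. In the base case $n=1$, i.e.\ $X=\spec\qR$ and $\kA=\qA\til$, Remark~\ref{22} gives $\kA\Md\simeq\qA\Md$, and the identification of \lop\ coherent modules with finitely generated projectives gives $\per\kA=\per\qA$. The compact generation of $\cD(\qA\Md)$ by $\qA$ and the equality $\cD\sm(\qA\Md)=\per\qA$ are classical.

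For the inductive step, write $X=U\cup V$ with $V$ affine and $U$ covered by $n-1$ affines, let $j\colon U\hookrightarrow X$ be the open immersion and $Z=X\=U\sbe V$ its closed complement. Proposition~\ref{212} provides the adjunction $\rL j^*\dashv\rR j_*$, and Corollary~\ref{12} identifies $\cD(\kA|_U)$ with $\cD\kA/\cD_Z\kA$, where $\cD_Z\kA:=\ker\rL j^*$. The crucial step is to construct a set of perfect compact generators of $\cD_Z\kA$. Since $V=\spec\qR$ and $Z=Z\cap V$ is closed in $V$, it is the vanishing locus of finitely many elements $\lst f r\in\qR$; the Koszul complex $K(\lst f r)\*_\qR\qA$ is a bounded complex of finitely generated projective $\qA$-modules, and its sheafification over $V$ can be glued with the zero complex over $U$ (the overlap receives zero from both sides since $Z\cap U=\0$ and $X$ is separated) to a perfect complex $\kK\sd\in\cD_Z\kA\cap\per\kA$. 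Tensoring with perfect objects of $\cD(\kA|_V)\simeq\cD(\qA\Md)$ produces enough such $\kK\sd$ to generate $\cD_Z\kA$. Combined with lifts along $\rR j_*$ of perfect compact generators of $\cD(\kA|_U)$ supplied by the inductive hypothesis, this yields a generating set of perfect compact objects of $\cD\kA$.

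It remains to verify $\per\kA=\cD\sm\kA$. The inclusion $\per\kA\sbe\cD\sm\kA$ is a local-to-global matter: on each affine patch $U_\al$ of a finite cover, $\kP\sd|_{U_\al}$ is a bounded complex of finitely generated projective $\kA|_{U_\al}$-modules, hence compact there, and a Mayer--Vietoris/\v{C}ech spectral sequence for $\rhom(\kP\sd,-)$ propagates local compactness to global compactness using the finiteness of the cover together with separatedness (all finite intersections being quasi-compact). The reverse inclusion $\cD\sm\kA\sbe\per\kA$ then follows from a standard argument of Neeman~\cite{ne}: in a compactly generated triangulated category, the compact objects coincide with the thick closure of any set of compact generators, and we have just exhibited one lying inside $\per\kA$. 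The main technical obstacle is the Koszul step---ensuring that the locally defined Koszul complexes on $V$ extend to genuine perfect objects of $\cD\kA$ and collectively generate $\cD_Z\kA$; this is where the separatedness hypothesis on $X$ is essential.
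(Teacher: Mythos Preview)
Your overall strategy---induction on the size of an affine cover, Koszul complexes to handle the closed complement, then combine with the open piece---is the Neeman/Bondal--Van den Bergh approach that Rouquier's cocovering theorem packages, and it is close in spirit to what the paper does. The paper, however, does \emph{not} run the induction by hand: it verifies that the subcategories $\cD_{\comp U_i}\kA$ form a cocovering in the sense of \cite[5.3.3]{rou}, checks the ``proper intersection'' condition via flat base change for the open immersions, proves a Koszul lemma showing each local piece $\cD_{U_i\setminus U_S}\kA_{U_i}$ is compactly generated, and then invokes \cite[Theorem~5.15]{rou} as a black box. That theorem already contains the delicate lifting arguments your induction would need.

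Your sketch has two genuine gaps. First, the Koszul complex on $V$ does \emph{not} restrict to the zero complex on $U\cap V$: it is a bounded complex of free $\qA$-modules there, merely acyclic because the $f_i$ generate the unit ideal away from $Z$. So ``the overlap receives zero from both sides'' is false and na\"ive gluing is impossible. What does work is $\rR i_*K$ for the open immersion $i\colon V\hookrightarrow X$; flat base change gives $j^*\rR i_*K\simeq\rR i'_*(K|_{U\cap V})\simeq 0$, so $\rR i_*K$ is locally perfect, hence perfect. Second, and more seriously, $\rR j_*$ does \emph{not} preserve compactness or perfection (already $j_*\kO_{\mA^1}$ is not coherent on $\mP^1$), so ``lifts along $\rR j_*$'' cannot furnish compact generators. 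The correct tool is Thomason's localization theorem: once $\cD_Z\kA$ is generated by an object compact in $\cD\kA$, each compact $C\in\cD(\kA|_U)$ satisfies $C\oplus C[1]\simeq j^*\tilde C$ for some compact $\tilde C\in\cD\kA$, and then $\{\rR i_*K\}\cup\{\tilde C\}$ generates. But Thomason's theorem as usually stated already assumes compact generation of the ambient category, so invoking it inside the induction requires care; this is precisely the subtlety that \cite[Theorem~5.15]{rou} is designed to absorb, and it is why the paper appeals to it rather than arguing directly.
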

 \begin{proof}
 Let $U\sbe X$ be an open affine subset of $X$, $\kA_U$ be the restriction of $\kA$ onto $U$, $\comp U=X\=U$, 
 $j=j_U:U\to X$ be the embedding. Then the inverse image functor $j^*:\kA\Md\to\kA_U\Md$ is exact and the natural 
 morphism $j^*j_*\to\1_{\kA_U\Md}$ is an isomorphism (actually, identity). Therefore $\ker j^*$ is a localizing 
 subcategory and $\kA_U\Md\simeq\kA\Md/\ker j^*$. Note that $\ker j_U^*$ consists of the $\kA$-modules $\kM$ such
 that $\supp\kM\sbe\comp U$. Then $\ker\rL j_*$ is a localizing subcategory of $\cD\kA$ and 
 $\cD\kA_U\simeq\cD\kA/\ker\rL j_*$. This kernel coincide with the full subcategory $\cD_{\comp U}\kA$ of
 $\cD\kA$ consisting of complexes whose cohomologies are supported on $\comp U$.
 
 If $W\sbe X$ is another open affine subset, then the subcategories $\cD_{\comp U}\kA$ and $\cD_{\comp W}\kA$ 
 \emph{intersect properly} in the sense of \cite[5.2.3]{rou}. Recall that it means that $j_W^*{j_U}_*j_U^*\kF=0$
 as soon as $j_W^*\kF=0$, what follows, for instance, from \cite[Corollaire~(1.5.2)]{gro2} applied to the cartesian 
 diagram of affine morphisms (open embeddings)
 \[
  \begin{CD}
   U\cap W @>{j'_W}>> U \\
   @V{j'_U}VV  @VV{j_U}V\\
   W @>>{j_W}> X
  \end{CD}
 \]
  Therefore, if
 $X=\bup_{i=1}^mU_i$ is an open affine covering of $X$, then $\set{\cD_{\comp U_i}\kA}$ is a cocovering of the 
 triangulated category $\cD\kA$ as defined in \cite[5.3.3]{rou}. If $S\sb\set{1,2,\dots,m}$ does not contain
 $i$, $U_S=\bup_{j\in S}U_j$, then $\bap_{j\in S}\cD_{\comp U_j}\kA=\cD_{\comp U_S}\kA$ and the image of
 $\cD_{\comp U_S}\kA$ in $\cD\kA_{U_i}$ coincides with $\cD_{U_i\=U_S}\kA_{U_i}$. There are sections 
 $\lst fk\in\qA=\Ga(U_i,\kO_X)$ such that $U_i\=U_S=V\row fk$ as a closed subset of $U_i$. The following lemma
 shows that the subcategory $\cD_{U_i\=U_S}\kA_{U_i}$ is compactly generated in $\cD\kA_{U_i}$.
 
 \begin{lemma}\label{koszul} 
  Let $\qA$ be an algebra over a commutative ring $\qO$ and $\qI=\row fk$ be a finitely generated ideal in
  $\qO$. Let $K\sd(\qI)$ be the corresponding Koszul complex. Denote by $\qA\Md_\qI$ the full subcategory 
  of $\qA\Md$ consisting of all modules $M$ such that for every element $a\in M$ there is $m$ such that 
  $\qI^ma=0$. Denote by $\cD_\qI\qA$ the full subcategory of $\cD\qA$ consisting of all complexes such that 
  their cohomologies belong to $\qA\Md_\qI$. Then $\cD_\qI\kA$ is generated by the complex 
  $K_\qA\sd(\qI)=\qA\*_\qO K\sd(\qI)$.
 \end{lemma}
 \begin{proof}
  Note that $\hom_{\cD\qA}(K_\qA\sd(\qI),C\sd)\simeq\hom_{\cD\qO}(K\sd(\qI),C\sd)$ for every $C\sd\in\cD\qA$. 
  If $C\sd\in\cD_\qI\qA$ is non-exact, then $\hom_{\cD\qO}(K\sd(\qI),C\sd[n])\ne0$ for some $n$ by 
  \cite[Proposition~6.6]{rou}. It proves the claim.
 \end{proof}
 
 Evidently, $K\sd_\qA(\qI)$ is compact in $\cD\qA$. So we can now use \cite[Theorem~5.15]{rou}. It implies 
 that $\cD\kA$ is compactly generated and a complex $\kC\sd$ in $\cD\kA$ is compact \iff $j_{U_i}^*\kC\sd$ 
 is compact in $\cD\kA_{U_i}$ for every $1\le i\le m$. As $U_i$ is affine, compact complexes in $\cD\kA_{U_i}$ 
 are just perfect complexes. Therefore, it is true for $\cD\kA$ too.
 \end{proof}

\section{Minors}
\label{s3} 

  \begin{definition}\label{31} 
  Let $(X,\kB)$ be a \ncs, $\kP$ be a \flop\ $\kB$-module, 
  $\kA=(\END_\kB\kP)\op$. The \ncs\ $(X,\kA)$ is called a \emph{minor} of the \ncs\ $(X,\kB)$.\!%
 \footnote{\,In the affine case this notion was introduced in \citep{minor}. Actually, the main results
 of this section are just global analogues of those from \citep{minor}.} 
  \end{definition} 
 
 In this situation we consider $\kP$ as $\kB\mt\kA$-bimodule (left over $\kB$, right over $\kA$).
 Let $\kP\ch=\Hom_\kB(\kP,\kB)$. It is an $\kA\mt\kB$-bimodule, \flop\
 over $\kB$. The following statements are evidently local, then they are well-known.

 \begin{proposition}\label{32} 
 The natural homomorphism $\kP\to\Hom_\kB(\kP\ch,\kB)$ is an isomorphism. Moreover, 
 $\kA\simeq\END_\kB\kP\ch\simeq \kP\ch\*_\kB\kP$.
 \end{proposition}

 We consider the following functors:
\begin{equation}\label{fgh} 
\begin{split}
 \sF=\kP\*_\kA\_&: \kA\Md\to\kB\Md,\\
 \sG=\Hom_\kB(\kP,\_)&: \kB\Md\to\kA\Md,\\
 \sH=\Hom_\kA(\kP\ch,\_)&: \kA\Md\to\kB\Md.
\end{split} 
\end{equation}
 Note that $\sG$ is exact and $\sG\simeq\kP\ch\*_\kB\_\,$, so both $(\sF,\sG)$ and $(\sG,\sH)$ 
 are adjoint pairs of functors. If the \ncs\ $(X,\kB)$ is noetherian, so is also $(X,\kA)$ 
 and these functors map \cht\ sheaves to \cht\ ones. 
 
 We set $\kI_\kP=\im\{\mu_\kP:\kP\*_\kA\kP\ch\to\kB\}$, where $\mu(p\*\ga)=\ga(p)$.
 
\begin{theorem}\label{FG} 
\begin{enumerate}
 \item  $\sG$ is a bilocalization functor, thus $\cC$ is a bilocalizing subcategory, $\kA\Md\simeq\kB\Md/\cC$, where
$\cC=\ker\sF=\kP^\perp$ and both $\sF$ and $\sH$ are full embeddings $\kA\Md\to\kB\Md$ (usually with different images).

 \item $\cC=\setsuch{\kM\in\kB\Md}{\kI_\kP\kM=0}\simeq(\kB/\kI_\kP)\Md.$
 
\item  $\im\sF={^\perp\cC}$ coincides with the full subcategory of $\kB\Md$ consisting 
 of all modules $\kM$ such that for every point $x\in X$ there is an exact sequence 
 $P_1\to P_0\to \kM_x\to 0$, where $P_0,P_1$ are multiples of $\kP_x$ \lb i.e. direct sums, maybe infinite, 
 of its copies\rb. We denote this subcategory by $\kP\Md$.

\item[3$'$.] $\im\sH=\cC^\perp$ coinsides with the full subcategory of $\kB\Md$ consisting of all modules $\kM$ such that
 there is an exact sequence $0\to\kM\to\kI_0\to\kI_1$, where $\kI_i\in\sH(\kA\inj)$.\!%
 \footnote{\,Note that all $\kB$-modules from $\sH(\kA\inj)$ are injective.} 
 We denote this subcategory by $\kP\Inj\Md$.
\end{enumerate}
\end{theorem}
\begin{proof}
 Theorem~\ref{11} and Corollary~\ref{biloc} show that, to prove claims \textit{1,3} and \textit{3$'$}, it is enough to prove 
 the following statements.
 
 \begin{proposition}\label{FG0} 
 \begin{enumerate}
 \item The natural morphism $\phi:\1_{\kA\Md}\to\sG\cir\sF$ is an isomorphism.
 \item $\im\sF=\kP\Md$.
 \item[2$'$.] $\im\sH=\kP\Inj\Md$.
 \end{enumerate}
 \end{proposition}
 \begin{proof}
 As the claims \textit{1} and \textit{2} are local, we can suppose that the \ncs\ $(X,\kB)$ is affine, so replace $\kB\Md$
 by $\qB\Md$, where $\qB=\Ga(X,\kB)$. Then $\kP=P\til$ for some finitely generated projective
 $\qB$-module and $\kA=\qA\til$, where $\boldsymbol A=(\End_{\boldsymbol B}P)\op$. Hence we can also replace 
 $\kA\Md$ by $\qA\Md$ and $\kP\Md$ by $P\Md$, the full subcategory of $\qB\Md$ consisting of all modules 
 $N$ such that there is an exact sequence $P_1\to P_0\to N\to0$, where $P_i$ are multiples of $P$. 
 
 Obviously, $\phi(\qA)$ is an isomorphism. Since $\sF$ and $\sG$ preserve
 arbitrary coproducts, $\phi(F)$ is an isomorphism for any free $\qA$-module $F$. 
 Let $M\in\kA\Md$. There is an exact sequence $F_1\to F_0\to M\to 0$, where $F_0,F_1$ are free modules,
 which gives rise to a commutative diagram with exact rows
 \[
 \xymatrix@C=1.5em{ F_1 \ar[r] \ar[d]_{\phi(F_1)} & F_0 \ar[r] \ar[d]_{\phi(F_0)} 
 & M \ar[r] \ar[d]_{\phi(M)}& 0 \\
   \sG\cir\sF(F_1) \ar[r] &  \sG\cir\sF(F_0) \ar[r] &  \sG\cir\sF(M) \ar[r] & 0
   }
 \]
 As the first two vertical arrows are isomorphisms, so is $\phi(M)$, which proves claim \textit{1}.
 Moreover, we get an exact sequence $\sF(F_1)\to\sF(F_0)\to\sF(M)\to0$, where $\sF(F_i)$ are
 multiples of $\sF(\qA)=P$. Therefore, $\sF(M)\in P\Md$. 
 
 Consider now the natural morphism $\psi:\sF\cir\sG\to\1_{B\Md}$. This time $\psi(P)$ is an isomorphism.
 Let now $N$ be a $\qB$-module such that there is an exact sequence $P_1\to P_0\to N\to0$, where $P_i$
 are multiples of $P$. Then there is a commutative diagram with exact rows
 \[
 \xymatrix@C=1.5em{ \sF\cir\sG(P_1) \ar[r] \ar[d]_{\psi(P_1)} 
 & \sF\cir\sG(P_0) \ar[r] \ar[d]_{\psi(P_0)} 
 & \sF\cir\sG(N) \ar[r] \ar[d]_{\psi(N)}& 0 \\
   P_1 \ar[r] &  P_0 \ar[r] & N \ar[r] & 0
   }
 \]
 The first two vertical arrows are isomorphisms, so $\psi(N)$ is also an isomorphism. 
 It proves claim \textit{3}.
 
 The proof of \textit{3$'$} is quite analogous to that of \textit{3}, so we omit it. 
 
 Note that the condition $\kM\in\kP\Inj\Md$ also turns out to be local, since it means that the natural map
 $\kM\to\sH\cir\sG(\kM)$ is an isomorphism.
 \end{proof}

 The statement \textit{2} is also local, so we only have to prove it for a ring $\qB$,
 a finitely generated projective $\qB$-module $P$ and the ideal $I_P=\im\mu_P$.
  It follows from \cite[Proposition VII.3.1]{ce} that $I_PP=P$. Therefore,
  if $f:P\to M$ is non-zero, then $I_P\im f=\im f\ne0$, hence $I_PM\ne0$. On the contrary,
  if $I_PM\ne0$, there is an element $u\in M$, elements $p_i\in P$ and homomorphisms
  $\ga_i:P\to B$ such that $\sum_i\ga_i(p_i)u\ne0$. 
  Let $\be:B\to M$ maps $1$ to $u$ and $\ga_i^u=\be\ga_i$. Then at least one of the
  homomorpisms $\ga^u_i$ is non-zero.
 \end{proof}
 
 The functor $\sG$ is exact, so it induces a functor $\DG:\cD\kB\to\cD\kA$ mapping a complex $\kF\sd$
 to $\sG\kF\sd$. It is both left and right derived functor of $\sG$. We can also consider the left
 derived functor $\LF$ of $\sF$ and the right derived functor $\RH$ of $\sH$, 
 both being functors $\cD\kA\to\cD\kB$.
 Obviously, $\DG$ maps $\cD^\si\kB$ to $\cD^\si\kA$, where $\si\in\set{+,-,b}$,
 $\LF$ maps $\cD^-\kA$ to $\cD^-\kB$ and $\RH$ maps
 $\cD^+\kA$ to $\cD^+\kB$.
 
 \begin{theorem}\label{DFG} 
 \begin{enumerate}
 \item  The functors $(\LF,\DG)$ and $(\DG,\RH)$ form adjoint pairs. 
 
 \item $\DG$ is a bilocalization functor, $\ker\DG=\cD_\cC\kB$, where $\cC=\ker\sG$ is a 
 bilocalizing subcategory, $\cD\kA\simeq\cD\kB/\cD_\cC\kB$ and both $\LF$ and $\RH$ are full embeddings $\cD\kA\to\cD\kB$ 
 (usually with different images).
 
 \item  The functor $\LF$ maps $\cD\sm\kA$ to $\cD\sm\kB$. 
 
 \item  $(\ker\DG,\im\LF)$ as well as $(\im\RH,\ker\DG)$ are semi-orthogonal decompositions of
 $\cD\kB$.
 
 \item  $\im\LF={^\perp(\cD_\cC\kB)}$ coincides with the full subcategory $\cD\ukP$ of $\cD\kB$ 
 consisting of complexes quasi-isomorphic to K-flat complexes $\kF\sd$ such that for 
 every $x\in X$ and every component $\kF^i$ the localization $\kF^i_x$ is a direct limit of
 modules from $\add\kP_x$. The same is true if we replace $\cD$ by $\cD^-$.
 
 \item[5p.]   If $\kA$ and $\kB$ have enough \lop\ modules \lb for instance, if $X$ is quasi-projective\rb, 
 $\im\LF$ coincides with the full subcategory $\cD\kP$ of $\cD\kB$ consisting of complexes 
 quasi-isomorphic to K-flat complexes $\kF\sd$ such that $\kF^i_x\in\Add\kP_x$ for every 
 $i\in\mZ$ and every point $x\in X$. The same is true if we replace $\cD$ by $\cD^-$.
 
 \item[5$'$.] $\im\RH=(\cD_\cC\kB)^\perp$ coincides with the full subcategory $\cD\kP\Inj$ of $\cD\kB$ consisting of
 complexes quasi-isomorphic to K-injective complexes consisting of modules from $\sH(\kA\inj)$. The same
 is true if we replace $\cD$ by $\cD^+$.
\end{enumerate} 
 \emph{Note that the condition in 5$'$ can also be verified locally at every point $x\in X$.}
 \end{theorem}
 \begin{proof}
  \textit{1.} Let $\kF\sd$ be a K-flat replica of $\kM\sd\in\cD\kA$ and $\kI\sd$ be an injective resolution
  of $\kN\sd\in\cD\kB$. Then $\LF\kM\sd=\sF\kF\sd$ and $\DG\kN\sd=\sG\kI\sd$. As $\kP\in\lp\kB$, the
  complex $\sF\kF\sd$ is K-flat and the complex $\sG\kI\sd$ is K-injective. 
  By Proposition~\ref{212}\,\textit{2},
 \begin{multline*}
  \rhom_\kB(\sF\kF\sd,\kI\sd)=\hom_\kB\sd(\sF\kF\sd,\kI\sd)\simeq\\ 
  \hom_\kA\sd(\kF\sd,\sG\kI\sd)=\rhom_\kA(\kF\sd,\sG\kI\sd)  .
 \end{multline*} Taking zero cohomologies, we obtain that
 \[
  \hom_\kB(\sF\kF\sd,\kI\sd)\simeq \hom_\kA(\kF\sd,\sG\kI\sd).
 \]
 %
  Choose now a K-flat replica $\kG\sd$ of $\kN\sd$ and a K-injective resolution $\kJ\sd$ of $\kM\sd$.
  Then $\DG\kN\sd=\sG\kG\sd$ and $\RH\kM\sd=\sH\kJ\sd$. By \cite[Proposition~5.14]{sp}, $\sH\kJ\sd$
  is weakly K-injective. By Proposition~\ref{212}\,\textit{2} and \cite[Proposition~6.1]{sp},
 \begin{multline*}
   \rhom_\kA(\sG\kG\sd,\kJ\sd)=\hom_\kA\sd(\sG\kG\sd,\kJ\sd)\simeq \\
   \hom_\kB\sd(\kG\sd,\sH\kJ\sd)= \rhom_\kB(\kG\sd,\sH\kJ\sd).
 \end{multline*} 
 Taking zero cohomologies, we obtain that
 \[
  \hom_\kA(\sG\kG\sd,\kJ\sd)\simeq \hom_\kB(\kG\sd,\sH\kJ\sd)
 \]
 
 \smallskip
  \textit{2} follows now from Theorem~\ref{FG} and Theorem~\ref{11}. The statement \textit{4}
  follows from Theorem~\ref{14}. 
  
  \smallskip
  \textit{3.} As the right adjoint $\DG$ of $\LF$ preserves arbitrary coproducts, $\LF$ 
  maps compact objects to compact ones. 
  
  \smallskip
  \textit{4\,} follows now from Corollary~\ref{14}.
  
  \smallskip
  \textit{5.} The construction of \cite[Proposition~1.1]{ajl} gives for any complex $\kM\sd\in\cD\cA$
  a quasi-isomorphic K-flat complex $\kF\sd$ such that all its components $\kF^i$ are flat. 
  Moreover, $\kF\sd$ is left bounded if so is $\kM\sd$.
  By \cite[Ch.\,X,\,\S\,1,\,Th\'eor\`eme~1]{boh}, $\kF^i_x\simeq\dlim\kL^i_n$, where $\kL^i_n$ 
  are projective finitely generated $\kA_x$-modules, hence belong to $\add\kA_x$. Then 
  $\LF\kM\sd\simeq\sF\kF\sd$. As $\sF$ preserves direct limits and $\sF\kA\simeq\kP$, 
  $\sF\kF^i_x\simeq\dlim\sF\kL^i_n$ and $\sF\kL^i_n\in\add\kP_x$. Hence $\kM\sd\in\cD\ukP$.
  
  On the contrary, let $\kN\sd\in\cD\ukP$. We can suppose that it is K-flat and for every 
  $i\in\mZ$ and every $x\in X$ we can present $\kN^i_x $ as $\dlim\kN^i_n$, where 
  $\kN^i_n\in\add\kP_x$. Then the complex $\sG\kN\sd$ is also K-flat \cite[Proposition~5.4]{sp},
  so $\LF\cir\DG(\kN\sd)\simeq\sF\sG(\kN\sd)$. As the natural map $\sF\sG(\kP)\to\kP$ is 
  an isomorphism, the same is true for all modules $\kN^i_n$, hence also for $\kN^i_x$. 
  Therefore, the natural map $\LF\cir\DG(\kN)\to\kN$ is an isomorphism. 

  The proof of \textit{5p} is quite analogous to the proof of \textit{5}, taking into account that in this
  situation every complex is quasi-isomorphic to a K-flat complex of \lop\ modules. The proof of \textit{5$'$}
  is also analogous to that of \textit{5}.
 \end{proof}
 
   Recall that $\cC=\ker\sG\simeq(\kB/\kI_\kP)\Md$.
 There is one special case when the category $\ker\DG$ can be described analogously.
 
 \begin{theorem}\label{qmod} 
  Suppose that the ideal $\kI_\kP$ is flat as a right $\kB$-module. Then $\ker\DG\simeq\cD(\kB/\kI_\kP)$.
 \end{theorem}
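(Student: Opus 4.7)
The plan is to realize the equivalence through the exact forgetful functor $i_*\colon(\kB/\kI_\kP)\Md\to\kB\Md$ coming from the quotient morphism of \ncss\ and its derived left adjoint $\rL i^*=(\kB/\kI_\kP)\tl_\kB\_$. Since $i_*$ is exact and fully faithful on the abelian level, it extends to a triangulated functor $i_*\colon\cD(\kB/\kI_\kP)\to\cD\kB$ whose image obviously lies in $\ker\DG=\cD_\cC\kB$, since every cohomology of $i_*M\sd$ is by construction a $\kB/\kI_\kP$-module. The task is to show that $i_*$ is fully faithful and essentially surjective onto $\ker\DG$.

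The key observation is the following consequence of right $\kB$-flatness of $\kI_\kP$. Tensoring the exact sequence
\[
0\to\kI_\kP\to\kB\to\kB/\kI_\kP\to 0
\]
on the right with any $\kB/\kI_\kP$-module $M$ yields a short exact sequence
\[
0\to\kI_\kP\*_\kB M\to M\to (\kB/\kI_\kP)\*_\kB M\to 0,
\]
in which the second map is the identity on $M$. Hence $\kI_\kP\*_\kB M=0$ for every $\kB/\kI_\kP$-module $M$. Moreover, since $\kI_\kP$ is right $\kB$-flat, the underived tensor product $\kI_\kP\*_\kB\kN\sd$ computes $\kI_\kP\tl_\kB\kN\sd$ for every $\kN\sd\in\cD\kB$, with cohomologies $\kI_\kP\*_\kB H^i(\kN\sd)$.

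From the distinguished triangle $\kI_\kP\to\kB\to\kB/\kI_\kP\to\kI_\kP[1]$ in the derived category of right $\kB$-modules I obtain, upon applying $\_\tl_\kB\kN\sd$, a natural distinguished triangle in $\cD\kB$
\[
\kI_\kP\tl_\kB\kN\sd\to\kN\sd\to i_*\rL i^*(\kN\sd)\to\kI_\kP\tl_\kB\kN\sd[1]
\]
whose middle arrow is the unit of the adjunction $(\rL i^*,i_*)$. For $\kN\sd\in\ker\DG$ every $H^i(\kN\sd)$ is a $\kB/\kI_\kP$-module, so by the observation above $H^i(\kI_\kP\*_\kB\kN\sd)=\kI_\kP\*_\kB H^i(\kN\sd)=0$. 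The leftmost term of the triangle is therefore acyclic, and the unit $\kN\sd\to i_*\rL i^*(\kN\sd)$ is a quasi-isomorphism. This gives essential surjectivity. Applying the same triangle with $\kN\sd=i_*M\sd$ for $M\sd\in\cD(\kB/\kI_\kP)$ shows that $\rL i^*\cir i_*\simeq\1_{\cD(\kB/\kI_\kP)}$, i.e.\ $i_*$ is fully faithful. Combined, $i_*$ yields the required equivalence $\cD(\kB/\kI_\kP)\ito\ker\DG$.

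The only real subtlety — which I expect to be the main point to justify carefully — is the identification of $\kI_\kP\tl_\kB\kN\sd$ with the naive tensor product $\kI_\kP\*_\kB\kN\sd$ and the verification that its cohomology is $\kI_\kP\*_\kB H^i(\kN\sd)$. Both rest on the hypothesis that $\kI_\kP$ is flat as a right $\kB$-module, and this is where the assumption of the theorem is used decisively; everything else is a formal consequence of Theorem~\ref{ker} and the triangle attached to the quotient.
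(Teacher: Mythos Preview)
Your argument is correct and is essentially the paper's proof recast in the language of the adjunction $(\rL i^*,i_*)$: both tensor the exact sequence $0\to\kI_\kP\to\kB\to\kB/\kI_\kP\to0$ against a complex with $\kB/\kI_\kP$-module cohomology and use right-flatness of $\kI_\kP$ to kill the first term. The only cosmetic difference is that the paper works directly with a K-flat replica and deduces $\kI_\kP\*_\kB M=0$ from the idempotence $\kI_\kP^2=\kI_\kP$, whereas you read it off from the tensored short exact sequence.
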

 \begin{proof}
  Let $\kI=\kI_\kP$, $\kQ=\kB/\kI$. One easily sees that $\kI^2=\kI$. We identify $\cD\kQ$ with the full 
  triangulated subcategory of $\cD\kB$, obviously contained in $\ker\DG$. Let $\kF\sd\in\ker\DG$, i.e. 
  its cohomologies are indeed $\kQ$-modules. We can suppose that $\kF\sd$ is K-flat. Tensoring it with the 
  exact sequence $0\to\kI\to\kB\to\kQ\to0$, we obtain an exact sequence of complexes
  $0\to\kI\*_\kB\kF\sd\to\kF\sd\to\kQ\*_\kB\kF\sd\to0$. Since $\kI$ is flat,
  $H\sd(\kI\*_\kB\kF\sd)\simeq\kI\*_\kB H\sd(\kF\sd)$. Note that 
  $\kI\*_\kB\kQ\simeq\kI/\kI^2=0$, whence $\kI\*_\kB\kM=0$ for any $\kQ$-module. Therefore,
  $H\sd(\kI\*_\kB\kF\sd)=0$, hence $\kF\sd$ is quasi-isomorphic to $\kQ\*_\kB\kF\sd$, which is in $\cD\kQ$.
 \end{proof}

 \begin{example}\label{endo} 
 An important special case of minors appears as the \emph{endomorphism construction}.
 Let $\kA$ be a \ncs, $\kF$ be a \cht\ $\kA$-module and $\kA_\kF=\END_\kA(\kA\+\kF)\op$. Then $\kA_\kF$
 is identified with the algebra of matrices
 \[
  \kA_\kF=\mtr{\kA&\kF\\\kF'&\kE}
 \]
 where $\kF'=\Hom_\kA(\kF,\kA)$ and $\kE=(\END_\kA\kF)\op$. If $\kP_\kF=\smtr{\kA\\\kF'}$ considered
 as $\kA_\kF$-module, then $\kA\simeq(\END_{\kA_\kF}\kP_\kF)\op$, so $\kA$ is a minor of $\kA_\kF$ 
 and the categories $\kA\Md$ and $\cD\kA$ are bilocalizations, respectively, of $\kA_\kF\Md$ and 
 $\cD\kA_\kF$. The corresponding functors are
 \begin{align*}
  \sF_\kF&=\kP_\kF\*_\kA\_\,,\\
  \sG_\kF&=\Hom_{\kA_\kF}(\kP_\kF,\_\,),\\
  \sH_\kF&=\Hom_{\kA_\kF}(\kP_\kF,\_\,).
 \end{align*}
 Note that $\kP_\kF\ch\simeq\mtr{\kA&\kF}$ as right $\kA_\kF$-module and, by the construction, 
 $\kP_\kF\simeq\Hom_\kA(\kP_\kF\ch,\kA)$. Theorem~\ref{FG}.\textit{2} then implies that the kernel $\cC$ of 
 the functor $\sG_\kF:\kA_\kF\Md\to\kA\Md$ is equivalent to $\kE/\kI_\kF\Md$, where 
 $\kI_\kF$ is the image of the natural map $\kF'\*_\kA\kF\to\kE$. This construction will be crucial in Section~\ref{konig}.
 \end{example}

\section{Heredity chains}
\label{3a} 

 We consider an application of minors to global dimensions and semi-orthogonal decompositions. Let $(X,\kB)$ be
 a \ncs, $\kM$ be a $\kB$-module. We call $\sup\setsuch{i}{\Ext^i_\kB(\kM,\_\,)\ne0}$ the \emph{local projective
 dimension} of the $\kB$-module $\kM$ and denote it by $\lpdim_\kB\kM$. If $(X,\kB)$ is noetherian and $\kM$ 
 is coherent, then $\lpdim_\kB\kM=\sup\setsuch{\pdim_{\kB_x}\kM_x}{x\in X}$.

\begin{lemma}\label{gdim} 
 Let $(X,\kB)$ be a \ncs, $\kP$ be a \flop\ $\kB$-module, $\kA=(\END_\kB\kP)\op$ and $\okB=\kB/\kI_\kP$. Suppose that
 $\kP$ is flat as right $\kA$-module,
 \begin{align*}
  \lpdim_\kB\kI_\kP&= d,\\
  \gdim \kA&= n,\\
  \gdim \okB&= m.
 \end{align*}
 Then $\gdim\kB\le\max\set{m+d+2,\,n}$.
 \end{lemma}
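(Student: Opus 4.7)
The plan is to bound $\Ext^i_\kB(\kM,\kN)$ for every pair of $\kB$-modules $\kM,\kN$, by splitting $\kM$ through the semi-orthogonal decomposition induced by the minor. Since everything may be localised (applying Proposition~\ref{center}-style localisation and the fact that global dimension is the supremum of stalkwise global dimensions), we may assume the situation is noetherian and local. The flatness of $\kP$ as a right $\kA$-module makes $\sF=\kP\*_\kA\_\,$ exact, so $\LF=\sF$; together with the local projectivity of $\kP\ch$ over $\kB$ this shows that $\kI_\kP\simeq\kP\*_\kA\kP\ch$ is flat as a right $\kB$-module, so Theorem~\ref{qmod} applies and $\ker\DG\simeq\cD(\okB\Md)$.

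Now fix a $\kB$-module $\kM$. The semi-orthogonal decomposition of Theorem~\ref{14} (applied to the adjoint pair $(\LF,\DG)$) gives the distinguished triangle
\[
\sF\sG\kM\xarr{\psi_\kM}\kM\to\kC_\kM\to\sF\sG\kM[1],
\]
where the first arrow is the counit, and $\kC_\kM\in\ker\DG$. Since $\sF\sG\kM$ and $\kM$ are concentrated in degree zero, $\kC_\kM$ has at most two non-zero cohomologies: $H^{-1}(\kC_\kM)=\ker\psi_\kM$ and $H^0(\kC_\kM)=\cok\psi_\kM$, both $\okB$-modules (being in $\ker\sG$). The associated long exact sequence of $\Ext$ groups reduces the problem to bounding $\Ext^i_\kB(\sF\sG\kM,\kN)$ and $\Ext^i_\kB(\kC_\kM,\kN)$ separately.

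For the first piece, the adjunction $(\LF,\DG)$ of Theorem~\ref{DFG}\,(1) together with $\DG=\sG$ and $\LF=\sF$ yields
\[
\Ext^i_\kB(\sF\sG\kM,\kN)\simeq\hom_{\cD\kA}(\sG\kM,\sG\kN[i])=\Ext^i_\kA(\sG\kM,\sG\kN),
\]
which vanishes for $i>n=\gdim\kA$. For the second piece, the short exact sequence $0\to\kI_\kP\to\kB\to\okB\to 0$ together with $\lpdim_\kB\kI_\kP=d$ gives $\Ext^j_\kB(\okB,\kN)=0$ for $j>d+1$, i.e.\ $\pdim_\kB\okB\le d+1$. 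For any $\okB$-module $L$, the Grothendieck spectral sequence
\[
E_2^{p,q}=\Ext^p_\okB(L,\Ext^q_\kB(\okB,\kN))\Arr\Ext^{p+q}_\kB(L,\kN)
\]
then collapses to give $\Ext^i_\kB(L,\kN)=0$ for $i>m+d+1$, since $\Ext^p_\okB(L,\_\,)=0$ for $p>m=\gdim\okB$.

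Splitting $\kC_\kM$ via its truncation triangle $H^{-1}(\kC_\kM)[1]\to\kC_\kM\to H^0(\kC_\kM)$ and applying the vanishing above to each cohomology (the shift by one on $H^{-1}$ costing one extra degree), one obtains $\Ext^i_\kB(\kC_\kM,\kN)=0$ for $i>m+d+2$. Feeding both vanishings back into the long exact sequence of the original triangle yields $\Ext^i_\kB(\kM,\kN)=0$ for $i>\max\{m+d+2,\,n\}$, which gives the claimed bound on $\gdim\kB$. The main technical subtlety is the degree bookkeeping for $\kC_\kM$ (verifying the extra $+1$ is genuinely needed and verifying that the flatness hypothesis on $\kP$ over $\kA$ really does propagate to flatness of $\kI_\kP$ over $\kB$ so that Theorem~\ref{qmod} is applicable); everything else is routine homological algebra.
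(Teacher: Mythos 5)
Your proof takes essentially the same route as the paper's: split $\kM$ through the counit $\sF\sG\kM\to\kM$, bound extensions from $\okB$-modules by $m+d+1$ via the Grothendieck spectral sequence (using $\pdim_\kB\okB\le d+1$), bound extensions from $\sF\sG\kM$ by $n$ via the adjunction $\Ext^i_\kB(\sF\,\_\,,\_\,)\simeq\Ext^i_\kA(\_\,,\sG\,\_\,)$, and combine. The paper works at the module level with the kernel and cokernel of $\alpha:\sF\sG\kM\to\kM$ directly, while you phrase the same degree-shift bookkeeping via the cone $\kC_\kM$ and its two-step truncation filtration; this is a cosmetic rather than a substantive difference, and the $+1$ for $H^{-1}(\kC_\kM)$ is precisely the paper's two-short-exact-sequence argument in disguise.

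Two detours in your write-up are worth flagging. First, the invocation of Theorem~\ref{qmod} is both unnecessary and unjustified: you assert that $\kI_\kP\simeq\kP\*_\kA\kP\ch$ and that this makes $\kI_\kP$ flat over $\kB$, but $\kI_\kP$ is by definition only the \emph{image} of $\mu_\kP:\kP\*_\kA\kP\ch\to\kB$, and $\mu_\kP$ need not be injective (the paper itself treats injectivity of this map as an extra hypothesis in Definition~\ref{relate}). Fortunately your argument never actually uses the identification $\ker\DG\simeq\cD(\okB\Md)$ --- what you need, and do correctly establish, is only that $H^{-1}(\kC_\kM)$ and $H^0(\kC_\kM)$ lie in $\ker\sG$, hence are $\okB$-modules, which follows from $\sG\sF\sG\simeq\sG$ alone. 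Second, the opening reduction to the noetherian local case is not needed: the statement and the paper's proof are phrased entirely in terms of the global $\Ext$ groups of $\kB\Md$, $\lpdim$, and the spectral sequence, none of which require localization. Dropping those two paragraphs leaves a clean, correct proof matching the paper's.
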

 \begin{proof}
  Let $\okB=\kB/\kI_\kP$. Then $\lpdim_\kB\okB=d+1$, and from the spectral sequence 
  $\ext^p_\okB(\kM,\Ext^q_\kB(\okB,\_\,))\Arr\ext^{p+q}_\kB(\kM,\_\,)$ it follows that 
  $\pdim_\kB\kM\le m+d+1$ for every $\okB$-module $\kM$. 
 Consider the functors $\sG=\hom_\kB(\kP,\_\,)$ and $\sF=\kP\*_\kA\!\_$. Since the morphism $\sG\sF\sG\to\sG$,
 arising from the adjunction, is an isomorphism, the kernel and the cokernel of the natural map
 $\al:\sF\sG \kM\to \kM$ are annihilated by $\sG$, so are actually $\okB$-modules. It implies that 
 $\ext^i_\kB(\kM,\kN)\simeq\ext^i_\kB(\sF\sG \kM,\kN)$ if $i>m+d+2$, so 
 $\pdim_\kB \kM\le\max\set{m+d+2,\,\pdim_\kB\sF\sG \kM}$. As both functors $\sF$ and $\sG$ are exact,
 $\ext^i_\kB(\sF\_\,,\_\,)\simeq\ext^i_\kA(\_\,,\sG\_\,)$, so $\pdim_\kB\sF\sG\kM\le n$. 
 \end{proof} 
 
 \begin{definition}\label{relate} 
 \begin{enumerate}
 \item  Let $(X,\kB)$ and $(X,\kA)$ be two \ncss. A \emph{relating chain} between $\kB$ and $\kA$ is a 
 sequence $(\kB_1,\kP_1$, $\kB_2,\kP_2$,$\dots,\kP_r,\kB_{r+1})$, where $\kB_1=\kB$, $\kB_{r+1}=\kA$, 
 every $\kP_i$ ($1\le i\le r$) is a \flop\ $\kB_i$-module which is also flat as right $\kA_i$-module, where 
 $\kA_i=(\END_{\kB_i}\kP_i)\op$, and $\kB_{i+1}=\kB_i/\kI_{\kP_i}$ for $1\le i\le r$. 
 
 \item  The relating chain is said to be \emph{flat} if, for every $1\le i\le r$, $\kI_{\kP_i}$ is flat as right
 $\kB_i$-module. Note that it is the case if the natural map $\kP_i\*_{\kA_i}\kP_i\ch\to\kB_i$ is a monomorphism.
 
 \item The relating chain is said to be \emph{pre-heredity} if, for every $1\le i\le r$, $\kI_{\kP_i}$ is \lop\ as
 left $\kB_i$-module. If it is both pre-heredity and flat, it is said to be \emph{heredity}.
 
 \item If the relating chain is heredity and all \ncss\ $\kA_i$ are hereditary, i.e. $\gdim\kA_i\le1$, we say that
 the \ncs\ $\kB$ is \emph{\qhr} of level $r$. (Thus \qhr\ of level $0$ means hereditary).
 \end{enumerate}
 \end{definition}

 We fix a relating chain $(\kB_1,\kP_1$, $\kB_2,\kP_2$,$\dots,\kP_r,\kB_{r+1})$ between $\kB$ and $\kA$
 and keep the notations of Definition~\ref{relate}\,\textit{1}. 
 
 \begin{corollary}\label{bound}
 Let $\gdim\kA_i\le n$ and $\lpdim_{\kB_i}\kI_{\kP_i}\le d$ for all $1\le i\le r$. Then
 $\gdim\kB\le r(d+2)+\max\set{\gdim\kA,\,n-d-2}$.  
 If this relating chain is pre-heredity, then $\gdim\kB\le\gdim\kA+2r$.
 \end{corollary} 
 
 Using Theorem~\ref{DFG}\,(4), Theorem~\ref{qmod} and induction, we also get the following result.
 
 \begin{corollary}\label{sorth} 
  If this relating chain is flat, there are semi-orthogonal decompositions
  $(\cT,\cT_r,\dots,\cT_1)$ and $(\lst{\cT'}{r},\cT)$ of $\cD\kB$ such that 
  $\cT_i\simeq\cT'_i\simeq\cD\kA_i$ $\,(1\le i\le r)$ and $\cT\simeq\cD\kA$. 
 \end{corollary}
 
 Note that, as a rule, $\kT_i\ne\kT_i'$.
 
  \begin{corollary}\label{qhr} 
  If a \ncs\ $\kB$ is \qhr\ of level $r$, then $\gdim\kB\le2r+1$ and there are semi-orthogonal decompositions
  $(\cT,\cT_r,\dots,\cT_1)$ and $(\lst{\cT'}{r},\cT)$ of $\cD\kB$ such that $\cT_i\simeq\cT'_i$ $\,(1\le i\le r)$,
  as well as $\cT$, is equivalent to the derived category of a hereditary \ncs.
  \end{corollary} 
 
  \begin{remark}\label{qhr-rem} 
 Suppose that $(X,\kB)$ is affine: $X=\spec\qR$ and $\kB=\qB^\sim$.
\begin{enumerate}
\item If $\qB$ is semiprimary, then $\kB$ is \qhr\ with respect to our definition \iff $\qB$ is \qhr\ in the
classical sense of \citep{cps,dr}.

\item If $\qR$ is a \dvr\ and $\qB$ is an $\qR$-order in a separable algebra, then $\kB$ is \qhr\ with respect 
to our definition \iff $\qB$ is \qhr\ in the sense of \citep{ko1}.
\end{enumerate}
 \end{remark}

 \begin{example}\label{qhr-ex} 
 Consider the endomorphism construction of Example~\ref{endo}. Suppose that $\kF$ is flat as right $\kE$-module, 
 $\kF'$ is \lop\ as left $\kE$-module and the natural map $\mu_\kF:\kF\*_\kE\kF'\to\kA$ is a monomorphism. Let 
 $\tkP=\smtr{\kF\\\kE}$ and $\okA=\kA/\im\mu_\kF$. Then one can easily verify that $(\kA_\kF,\tkP,\okA)$ 
 is a heredity relating chain. Therefore, if both $\kE$ and $\okA$ are \qhr, so is $\kA_\kF$. These conditions hold,
 for instance, if $\kA$ is noetherian and reduced, $\kF$ is coherent torsion free and  $\kE$ is hereditary (the situation
 which will be explored in Section~\ref{konig}).
 \end{example}

\section{Strongly Gorenstein schemes}
\label{s4} 

 In this section we only consider \emph{noetherian} \ncss.

 \begin{definition}\label{41} 
 Let $(X,\kA)$ be a noetherian \ncs. We call it \emph{\gor} if $X$ is equidimensional, $\kA$ is \CM\
 as $\kO_X$-module and $\idim_\kA\kA=\dim X$.\!%
 \footnote{\,We do not know whether the last condition implies the \CM\ property, as it is in the
 commutative case.}
 \end{definition}
 
 Recall that an $\kA$-module $\kM$ is injective \iff $\kA_x$-modules $\kM_x$ are injective for all
 $x\in X\cl$ (the proof from \cite[Proposition~7.17]{ha} remains valid in non-commutative situation too).
 We need some basic facts about injective dimension for non-commutative rings. Now $\qR$ denotes a
 noetherian commutative local ring with the maximal ideal $\gM$ and the residue field $\aK=\qR/\gM$, 
 $\qA$ denotes an $\qR$-algebra finitely generated as $\qR$-module. Let also $\gR=\rad\qA$ and 
 $\oqA=\qA/\gR$. As usually, for every ideal $I\sbe\qR$ we denote by $V(I)$ the set of prime ideals 
 containing $I$.
 
 \begin{theorem}\label{42} 
  $\idim M=\sup\setsuch{i}{\ext^i_{\qA}(\oqA,M)\ne0}$.
 \end{theorem}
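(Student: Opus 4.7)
Set $n=\sup\set{i:\ext^i_\qA(\oqA,M)\ne0}$. The inequality $\idim_\qA M\ge n$ is immediate from the definition of injective dimension. For the reverse inequality, assume $n<\infty$; I plan to show $\ext^{n+1}_\qA(N,M)=0$ for every finitely generated $\qA$-module $N$, which by Baer's criterion (together with noetherianity of $\qA$) gives $\idim_\qA M\le n$.

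\emph{Step 1 (finite length case).} Since $\qA$ is module-finite over local $\qR$, we have $\gM\qA\sbe\gR$, and the finite-dimensional $\aK$-algebra $\qA/\gM\qA$ has nilpotent radical, so $\gR^s\sbe\gM\qA$ for some $s$. Hence $\oqA$ is a semisimple $\aK$-algebra, and every finitely generated $\oqA$-module $N_0$ is a direct summand of a finite sum of copies of $\oqA$; consequently $\ext^{n+1}_\qA(N_0,M)=0$. For any f.g.\ $N$ with $\gR^kN=0$, induct on $k$ via the short exact sequence $0\to\gR N\to N\to N/\gR N\to 0$ and the long exact Ext sequence. This settles the claim for all f.g.\ $N$ of finite length over $\qA$.

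\emph{Step 2 (general f.g.\ case).} Argue by noetherian induction on $\ann_\qR N$. Suppose for contradiction that $\ext^{n+1}_\qA(N,M)\ne0$ for some f.g.\ $N$, chosen with $\ann_\qR N$ maximal among the annihilators of such modules. Step 1 rules out $\sqrt{\ann_\qR N}=\gM$, so every associated $\qR$-prime of $N$ sits strictly below $\gM$; by prime avoidance there exists a central $x\in\gM$ that is a nonzerodivisor on $N$. The long Ext sequence of $0\to N\xarr{x}N\to N/xN\to0$, together with $\ext^{n+1}_\qA(N/xN,M)=0$ (from maximality, since $\ann_\qR(N/xN)\supsetneq\ann_\qR N$), shows multiplication by $x$ is surjective on $E:=\ext^{n+1}_\qA(N,M)$, whence $E=\bap_k x^kE$. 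Applied to the f.g.\ $\qR$-module $E$, Krull's intersection theorem forces $E=0$, contradicting the choice of $N$.

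\emph{Main obstacle.} The delicate part is the Nakayama finish in Step~2, which needs $E=\ext^{n+1}_\qA(N,M)$ to carry enough finiteness as an $\qR$-module. When $M$ is a finitely generated $\qA$-module this is automatic from finite generation of $N$ and noetherianity of $\qA$; for arbitrary $M$ one first reduces to the $\gM$-adic completion $\qR\to\hat\qR$ (which is faithfully flat, preserves $\ext$ of f.g.\ modules, and preserves $\idim$) and then uses Matlis duality on the complete side to convert the injective-dimension question into a projective-dimension question about a f.g.\ module, where the Nakayama/Krull-intersection argument applies directly.
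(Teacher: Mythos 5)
Your approach is essentially the paper's: reduce to a nonzerodivisor argument on $\gM$ and finish with Nakayama (the paper organizes this via Lemma~\ref{43}, one descent step through the prime ideals of $\qR$, while you do a direct noetherian induction on $\ann_\qR N$; your Step~1 makes explicit the finite-length base case, which the paper leaves implicit in the reference to \cite{bh}). However, Step~2 contains a genuine gap. The inference ``$\sqrt{\ann_\qR N}\ne\gM$, so every associated $\qR$-prime of $N$ sits strictly below $\gM$'' is simply false: $\gM$ may be an \emph{embedded} associated prime of $N$ even when $\sqrt{\ann_\qR N}\ne\gM$ (e.g.\ $N=\qA\oplus\oqA$ with $\qA$ torsion-free over $\qR$, so $\ann_\qR N=0$ but $\gM\in\ass_\qR N$). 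To run prime avoidance you need $\gM\notin\ass_\qR N$, and this requires a separate argument. The standard repair is to first pass to $N/\Gamma_\gM(N)$, where $\Gamma_\gM(N)=\setsuch{n\in N}{\gM^k n=0\text{ for some }k}$: this is a finite-length $\qA$-submodule (so Step~1 gives $\ext^{n+1}_\qA(\Gamma_\gM(N),M)=0$), hence by the long exact sequence $N/\Gamma_\gM(N)$ is still a counterexample; its annihilator contains $\ann_\qR N$ and therefore equals it by your maximality, and now $\gM\notin\ass_\qR(N/\Gamma_\gM(N))$. This peeling-off step is precisely what the paper performs at the start of the proof of Lemma~\ref{43} (``If $\gQ\in\ass N$ and $\gQ\ne\gP$, there is a submodule $N'\sbe N$ \dots Thus we can suppose $\ass N=\{\gP\}$''); omitting it breaks the argument.

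On the ``main obstacle'': your worry is understandable given that the theorem statement does not restrict $M$, but the paper's Lemma~\ref{43} explicitly takes $M$ to be a noetherian module, and that is the scope in which the theorem is used (Corollary~\ref{44} applies it to coherent $\kM$). In that case $\ext^{n+1}_\qA(N,M)$ is automatically a finitely generated $\qR$-module and Nakayama applies directly, so no completion or Matlis-duality detour is needed. For genuinely arbitrary $M$ the formula is actually false (e.g.\ over $\qR=\qA=\Mk[[x,y]]$ the module $M=\qR_{(x)}$ has $\ext^i_\qR(\Mk,M)=0$ for all $i$ while $\idim_\qR M>0$), so the sketch in that paragraph could not succeed and should be dropped.
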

 
 Just as in \cite[Proposition~3.1.14]{bh}, this theorem is an immediate consequence of the following lemma.
   
 \begin{lemma}\label{43} 
 Let $\gP\ne\gM$ be a prime ideal of $\qR$, $M$ be a noetherian $\qR$-module. Suppose that 
 $\ext^i_{\qA}(N,M)=0$ for any noetherian $\qA$-module $N$ such that $V(\ann_{\qR} N)\sb V(\gP)$ 
 and $i>m$. Then also $\ext^i_{\qA}(N,M)=0$ for any noetherian $\qA$-module $N$ such that 
 $V(\ann_{\qR} N)= V(\gP)$ and $i>m$.
 \end{lemma}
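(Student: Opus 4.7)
The plan is to reduce to the single new case $V(\ann_\qR N)=V(\gP)$, since any noetherian $N$ with $V(\ann_\qR N)\sbe V(\gP)$ satisfies either the strict inclusion (covered by hypothesis) or equality. So from now on assume $V(\ann_\qR N)=V(\gP)$. Because $\gP\ne\gM$, pick $x\in\gM\setminus\gP$ and let $K=\ker(N\xarr{x}N)$, producing two short exact sequences
\[
0\to K\to N\to xN\to 0,\qquad 0\to xN\to N\to N/xN\to 0.
\]
Both $K$ and $N/xN$ are killed by $x$, so $V(\ann_\qR K)$ and $V(\ann_\qR (N/xN))$ lie in $V(\ann_\qR N)\cap V(x)$. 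Since $x\notin\gP$, the point $\gP$ is in $V(\ann_\qR N)=V(\gP)$ but not in $V(x)$, making $V(\ann_\qR N)\cap V(x)$ a strict subset of $V(\gP)$. The hypothesis then gives $\ext^i_\qA(K,M)=\ext^i_\qA(N/xN,M)=0$ for all $i>m$.

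Next I apply $\hom_\qA(-,M)$ to each sequence. The long exact sequence of the second, combined with the vanishing of $\ext^i_\qA(N/xN,M)$ in degrees $i>m$ and $i+1>m$, produces isomorphisms $\ext^i_\qA(N,M)\ito\ext^i_\qA(xN,M)$ for $i>m$. The long exact sequence of the first, combined with $\ext^i_\qA(K,M)=0$ for $i>m$, produces surjections $\ext^i_\qA(xN,M)\twoheadrightarrow\ext^i_\qA(N,M)$ for $i>m$. The composition of these two maps is exactly multiplication by $x$ on $\ext^i_\qA(N,M)$, so I obtain that multiplication by $x$ is surjective on $\ext^i_\qA(N,M)$ for every $i>m$.

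To finish by Nakayama, I need $\ext^i_\qA(N,M)$ to be finitely generated over $\qR$. Since $\qA$ is finitely generated as a module over the noetherian ring $\qR$, it is itself a noetherian ring, so the noetherian $\qA$-module $N$ admits a resolution by finitely generated projective $\qA$-modules $P_\bu$; each $P_i$ is a direct summand of some $\qA^{n_i}$, hence $\hom_\qA(P_i,M)$ is a direct summand of $M^{n_i}$, which is noetherian over $\qR$ because $M$ is. Thus $\ext^i_\qA(N,M)$ is finitely generated over $\qR$, and since $x\in\gM=\rad\qR$, Nakayama's lemma forces $\ext^i_\qA(N,M)=0$ for all $i>m$.

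The one spot requiring care is the bookkeeping in the two long exact sequences: I must extract \emph{surjectivity} of multiplication by $x$ (the correct ingredient for Nakayama), which comes from the fortunate combination of an isomorphism out of one sequence and a surjection out of the other—neither sequence alone would suffice.
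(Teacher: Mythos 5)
Your proof is correct, and it takes a genuinely different route than the paper's. The paper first reduces to the case $\ass N=\{\gP\}$ by repeatedly peeling off a submodule $N'$ annihilated by some associated prime $\gQ\ne\gP$ (so $V(\gQ)\subsetneq V(\gP)$) and passing to $N/N'$; this reduction implicitly rests on a Noetherian-induction argument that the paper does not spell out. Once $\ass N=\{\gP\}$, an element $a\in\gM\setminus\gP$ is automatically a non-zero-divisor, and the single exact sequence $0\to N\xarr{a}N\to N/aN\to0$ yields surjectivity of $a$ on $\ext^i_\qA(N,M)$. You sidestep the reduction entirely by allowing $x$ to be a zero-divisor and factoring multiplication by $x$ through $xN$ via two short exact sequences; the isomorphism extracted from one and the surjection from the other compose to $x^*$, giving the needed surjectivity. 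This is a genuine simplification of the argument. You also make explicit the finite generation of $\ext^i_\qA(N,M)$ over $\qR$ needed to invoke Nakayama, a point the paper leaves tacit. One small remark: your initial reduction to $V(\ann_\qR N)=V(\gP)$ is harmless but not required — one already has $V(\ann_\qR K)$ and $V(\ann_\qR(N/xN))$ contained in $V(x)\cap V(\ann_\qR N)\sbe V(x)\cap V(\gP)\subsetneq V(\gP)$ without it, since $\gP\in V(\gP)\setminus V(x)$.
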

 \begin{proof}
  Suppose that the condition is satisfied and let $V(\ann_{\qR}N)= V(\gP)$. If $\gQ\in\ass N$ and 
  $\gQ\ne \gP$, there is a submodule $N'\sbe N$ such that $\gQ N'=0$. Therefore, $\ext^i_{\qA}(N',M)=0$
  for $i>m$ and we only have to prove that $\ext^i_{\qA}(N/N',M)=0$ for $i>m$. Thus we can suppose that
  $\ass N=\{\gP\}$. Let $a\in\gM\=\gP$. Then $a$ is non-zero-divisor on $N$, i.e. we have the exact 
  sequence $0\to N\xarr a N\to N/aN\to0$. It gives and exact sequence
  \[
   \ext^i_{\qA}(N,M)\xarr a \ext^i_{\qA}(N,M) \to \ext^{i+1}_{\qA}(N/aN,M).
  \]
  Obviously, $\ann_{\qR}N/aN\sp \gP$, so the last term is $0$ if $i>m$. Therefore, 
  $a\ext^i_{\qA}(N,M)=\ext^i_{\qA}(N,M)$ and $\ext^i_{\qA}(N,M)=0$ by Nakayama's Lemma.
 \end{proof}
 
 \begin{corollary}\label{44} 
  Let $\kM$ be a \cht\ $\kA$-module. Then
\begin{align*}
    \idim_\kA\kM&=\sup\setsuch{i}{\ext^i_\kA(\kA(x),\kM)\ne0 \text{\emph{ for some }} x\in X\cl}=\\
    			&=\sup\setsuch{\idim_{\kA_x}\kM_x}{x\in X\cl}.
\end{align*}
  Here $\kA(x)$ denotes $\kA\*_{\kO_X}\aK(x)$.
 \end{corollary}
 
 \begin{corollary}\label{44a} 
 \begin{align*}
  \gdim\kA&=\sup\setsuch{\pdim_\kA\kA(x)}{x\in X\cl}=\\
  		  &=\sup\setsuch{i}{\ext^i_\kA(\kA(x),\kA(x))\ne0 \text{\emph{ for some }}x\in X\cl}=\\
  		  &=\sup\setsuch{\gdim\kA_x}{x\in X\cl}.
 \end{align*}
 \end{corollary}
 
 \begin{lemma}\label{45} 
  Let $M$ be a noetherian $\qA$-module. If an element $a\in\qR$ is non-zero-divisor both on $\qA$
  and on $M$, then $\idim_{\qA}M=\idim_{\qA/a\qA}M/aM$.
 \end{lemma}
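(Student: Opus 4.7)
The plan is to encode both injective dimensions as suprema of indices of non-vanishing Ext groups via Theorem~\ref{42}, and then transport one side to the other via a Rees-type change-of-rings identity. Write $\bar\qA=\qA/a\qA$ and $\bar M=M/aM$. Since $a$ cannot be a unit (otherwise the statement is trivial), $a$ lies in $\gM$, so $a\qA\sbe\gM\qA\sbe\rad\qA$ by Nakayama, using that $\qA$ is finitely generated over the local ring $\qR$. Hence $\rad\bar\qA=(\rad\qA)/a\qA$ and $\bar\qA/\rad\bar\qA\simeq\oqA$, so Theorem~\ref{42} applied over both rings reads
\[
\idim_\qA M=\sup\setsuch{i}{\ext^i_\qA(\oqA,M)\ne0},\quad \idim_{\bar\qA}\bar M=\sup\setsuch{i}{\ext^i_{\bar\qA}(\oqA,\bar M)\ne0}.
\]

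The heart of the argument is a Rees-type change-of-rings isomorphism
\[
\ext^i_{\bar\qA}(N,\bar M)\simeq\ext^{i+1}_\qA(N,M),\quad i\ge 0,
\]
for every $\bar\qA$-module $N$. I would establish it from the short exact sequence $0\to\qA\xarr{a}\qA\to\bar\qA\to 0$, which is a projective $\qA$-resolution of $\bar\qA$ precisely because $a$ is regular on $\qA$. Applying $\hom_\qA(-,M)$ and invoking the regularity of $a$ on $M$ gives $\ext^q_\qA(\bar\qA,M)=0$ for $q\ne 1$ and $\ext^1_\qA(\bar\qA,M)\simeq\bar M$, so the Cartan--Eilenberg change-of-rings spectral sequence $\ext^p_{\bar\qA}(N,\ext^q_\qA(\bar\qA,M))\Arr\ext^{p+q}_\qA(N,M)$ collapses to the desired isomorphism. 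A more hands-on alternative, perhaps cleaner in the non-commutative setting, is to lift a projective $\bar\qA$-resolution of $N$ to a projective $\qA$-resolution by splicing in $0\to\qA\xarr{a}\qA\to\bar\qA\to 0$ componentwise, and compare the resulting $\hom$-complexes directly.

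Finally, specializing to $N=\oqA$ and combining with the vanishing $\hom_\qA(\oqA,M)=0$---valid because any $\qA$-linear map $\oqA\to M$ has image killed by $a$ and hence is zero---transports the supremum defining $\idim_{\bar\qA}\bar M$ onto the supremum defining $\idim_\qA M$, yielding the asserted equality. The main technical hurdle I anticipate is justifying the Cartan--Eilenberg collapse in the present non-commutative setting: one must handle the $\qA$-bimodule structure on $\bar\qA$ carefully, and verify that the short resolution of $\bar\qA$ can indeed be used in the change-of-rings formalism, which it can because $a$ lies in the centre $\qR$. For this reason I would favour the direct lifted-resolution argument over the spectral-sequence version when writing the proof out in full.
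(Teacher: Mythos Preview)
Your approach is exactly what the paper intends: its proof consists of the single sentence ``It just repeats that of \cite[Corollary~3.1.15]{bh}'', and what you have written is precisely the non-commutative transcription of that argument---Theorem~\ref{42} in place of \cite[Proposition~3.1.14]{bh}, followed by the Rees change-of-rings isomorphism.

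There is, however, an arithmetic slip in your final paragraph. Your isomorphism $\ext^i_{\bar\qA}(\oqA,\bar M)\simeq\ext^{i+1}_\qA(\oqA,M)$, together with $\hom_\qA(\oqA,M)=0$, yields
\[
\idim_{\bar\qA}\bar M \;=\; \sup\setsuch{i}{\ext^{i+1}_\qA(\oqA,M)\ne0} \;=\; \idim_\qA M-1,
\]
so what your argument actually proves is $\idim_\qA M=\idim_{\bar\qA}\bar M+1$, not the literal equality stated in Lemma~\ref{45}. This shifted version is exactly \cite[Corollary~3.1.15]{bh}, so the discrepancy is a misprint in the paper's lemma rather than a flaw in your method; and it is the shifted version that Corollary~\ref{46} actually needs, since passing modulo an $\qA$-regular element of $\gM$ lowers both the Krull dimension and the injective dimension by one.
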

 \begin{proof}
  It just repeats that of \cite[Corollary~3.1.15]{bh}. 
 \end{proof}
 
 \begin{corollary}\label{46} 
  Let $\fA=\row am$ be an $\qA$-sequence in $\gM$. Then $\qA$ is \gor\ \iff so is $\qA/\fA\qA$.
 \end{corollary}
 
 \begin{corollary}\label{47} 
  $\kA$ is \gor\ \iff so is $\kA\op$.
 \end{corollary}
 \begin{proof}
  The claim is local, so we can replace $\kA$ by $\qA$.
  Corollary~\ref{46} reduces the proof to the case when $\kdim\qR=0$, i.e. $\qA$ is just an artinian
  algebra. Then it is well-known \cite[Proposition~IV.3.1]{ars}. 
 \end{proof}
 
 For a noetherian \ncs\ $(X,\kA)$ we denote by $\cm\kA$ the full subcategory of $\kA\md$ consisting
 of such modules $\kM$ that $\kM_x$ is a maximal \CM\ module over $\kO_{X,x}$ for every point $x\in X$. 
 The following results can be proved just as in the commutative case (see \cite[Section~3.3]{bh}).
 
 \begin{theorem}\label{48} 
  Let $(X,\kA)$ be a \gor\ \ncs\ and $\kM\in\cm\kA$. 
  \begin{enumerate}
  \item  $\Ext^i_\kA(\kM,\kA)=0$ for $i\ne0$.
  
  \item The natural map $\kM\to\Hom_\kA(\Hom_\kA(\kM,\kA),\kA)$ is an isomorphism.
  \end{enumerate}
  \smallskip
  Thus the functor $^*:\kM\mps\kM^*=\Hom_\kA(\kM,\kA)$ gives an exact duality between the categories
  $\cm\kA$ and $\cm\kA\op$.
 \end{theorem}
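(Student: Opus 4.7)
Both statements are local in the sense of Corollary \ref{44}, so the plan is to reduce to the local affine case and then induct on $d=\dim X$. More precisely, it suffices to prove: if $R$ is a local Noetherian ring of dimension $d$ with maximal ideal $\gM$, $\qA$ is a finitely generated $R$-algebra that is Cohen-Macaulay over $R$ with $\idim_\qA \qA = d$, and $M$ is a finitely generated MCM $\qA$-module, then $\ext^i_\qA(M,\qA)=0$ for $i>0$ and $M\to M^{**}$ is an isomorphism, where $M^*=\hom_\qA(M,\qA)$.

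For the base case $d=0$ the algebra $\qA$ is Artinian with $\idim_\qA \qA = 0$. By Corollary \ref{47}, $\qA\op$ is also strongly Gorenstein, so $\qA$ is self-injective on both sides; being also Noetherian, it is quasi-Frobenius. Then $\hom_\qA(-,\qA)$ is exact on $\qA\md$ and $\qA\op\md$, and the standard QF duality $N\simeq N^{**}$ gives both claims at once.

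For the inductive step, use prime avoidance to choose $a\in\gM$ that is a non-zero-divisor on $R$, on $\qA$ and on $M$ simultaneously; this is possible because the Cohen-Macaulay and MCM hypotheses force $\gM\notin\ass_R \qA\cup\ass_R M$. By Corollary \ref{46}, $\bar\qA=\qA/a\qA$ is strongly Gorenstein of dimension $d-1$, and $\bar M=M/aM$ is MCM over $\bar\qA$, so the induction hypothesis applies. A projective resolution $P_\bu\to M$ over $\qA$ becomes, after reduction modulo $a$, a projective resolution $\bar P_\bu\to \bar M$ over $\bar\qA$ (because $a$ is $M$-regular and $\qA$-regular), whence $\ext^i_\qA(M,\bar\qA)\simeq\ext^i_{\bar\qA}(\bar M,\bar\qA)=0$ for $i>0$. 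The long exact sequence arising from $0\to\qA\xarr{a}\qA\to\bar\qA\to0$ then shows that multiplication by $a$ is surjective on $\ext^i_\qA(M,\qA)$ for every $i\ge 1$; since this is a finitely generated $R$-module, Nakayama's lemma yields (1).

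For (2), the same short exact sequence combined with the vanishing just proved gives $M^*/aM^*\simeq\hom_\qA(M,\bar\qA)\simeq\hom_{\bar\qA}(\bar M,\bar\qA)=\bar M^*$, and the argument applied to $\bar M^*$ over $\bar\qA\op$ shows that $M^*$ is MCM over $\qA\op$ and that $a$ is $M^*$-regular. Iterating, $M^{**}/aM^{**}\simeq\bar M^{**}\simeq\bar M=M/aM$, so the natural map $\phi:M\to M^{**}$ is an isomorphism modulo $a$. Since $a$ is regular on both $M$ and $M^{**}$, the long exact $\tor$ sequence forces the kernel and cokernel of $\phi$ to vanish modulo $a$, and they vanish outright by Nakayama. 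The main technical obstacle is the simultaneous control of regular elements and the change-of-rings identifications in the non-commutative setting; once these are in place everything mirrors the commutative argument of \cite[Section 3.3]{bh}.
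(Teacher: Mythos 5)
Your proposal is correct and follows the same route the paper implicitly prescribes: the paper offers no proof of Theorem~\ref{48} beyond the remark that the result ``can be proved just as in the commutative case (see \cite[Section~3.3]{bh})'', and your argument is exactly that Bruns--Herzog induction on dimension (base case via two-sided self-injectivity, i.e.\ quasi-Frobenius duality; inductive step via reduction by a regular element and Nakayama), with Corollaries~\ref{44}, \ref{46}, \ref{47} playing the roles of their commutative counterparts. One small cosmetic point: you ask for $a$ to be simultaneously regular on $R$, $\qA$, and $M$, but your prime-avoidance justification only rules out $\gM$ from $\ass_R\qA\cup\ass_R M$; since $R$-regularity of $a$ is never actually used in the argument (Lemma~\ref{45} and Corollary~\ref{46} need only $\qA$- and $M$-regularity, and the change-of-rings and Nakayama steps likewise), you can safely drop that extra requirement.
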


 Let now $(X,\kA)$ be a \gor\ \ncs, $\kF\in\cm\kA$. Consider the endomorphism construction described
 in Example~\ref{endo}. Theorem~\ref{48} implies that the natural map $\phi(\kM):\sF_\kF\kM\to\sH_\kF\kM$ 
 is an isomorphism for $\kM=\kA$, hence an isomorphism for any $\kM\in\lp\kA$.
  
 \begin{theorem}\label{49} 
 Let $(X,\kA)$ be \gor\ and contain enough \lop\ modules, $\kF\in\cm\kA$. Then the restrictions of the  
 functors $\LF_\kF$ and $\RH_\kF$ onto the subcategory $\cD\sm\kA$ are isomorphic. Thus the restriction
 of $\LF_\kF$ onto $\cD\sm\kA$ is both left and right adjoint to the bilocalization functor 
 $\DG_\kF$.
 \end{theorem}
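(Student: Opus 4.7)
The strategy is to compare $\LF_\kF$ and $\RH_\kF$ on $\cD\sm\kA$ by resolving any perfect complex by a \emph{bounded} complex of \lop\ modules, on each component of which the preceding paragraph supplies a genuine isomorphism $\phi\se\sF_\kF\to\sH_\kF$, and then checking that both derived functors are computed without further resolution on such complexes.

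Given $\kM\sd\in\cD\sm\kA=\per\kA$, I would first construct a bounded complex $\tkP\sd$ of \lop\ $\kA$-modules equipped with a quasi-isomorphism $\tkP\sd\to\kM\sd$. Since $\kA$ has enough \lop\ modules, Remark~\ref{210} yields an Lp-resolution $\kP\sd\to\kM\sd$ consisting of \lop\ modules. Perfectness of $\kM\sd$ together with the quasi-compactness of $X$ produces a uniform global bound $n$ on the projective dimension of $\kM\sd$; the good truncation $\tau^{\ge k}\kP\sd$ for $k$ sufficiently negative then has all components \lop, since the syzygy appearing at the cutoff lies above the projective-dimension bound. Call this truncation $\tkP\sd$.

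Next, I would verify that the underived functors $\sF_\kF$ and $\sH_\kF$ applied componentwise to $\tkP\sd$ already compute $\LF_\kF\kM\sd$ and $\RH_\kF\kM\sd$ respectively. The first is immediate: a bounded complex of \lop\ (hence flat) modules is K-flat, so $\LF_\kF\kM\sd\simeq\sF_\kF\tkP\sd$. For the second, the Gorenstein hypothesis enters. The matrix description of Example~\ref{endo} gives $\kP_\kF\ch\simeq\kA\+\kF$ as a left $\kA$-module; both summands lie in $\cm\kA$, so Theorem~\ref{48}\,(1) yields $\Ext^i_\kA(\kP_\kF\ch,\kA)=0$ for $i>0$, and localising together with passing to local summands of free modules extends this to $\Ext^i_\kA(\kP_\kF\ch,\kN)=0$ for every $\kN\in\lp\kA$ and $i>0$. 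Thus every \lop\ module is $\sH_\kF$-acyclic and a bounded complex of such modules computes $\RH_\kF$ without further resolution, giving $\RH_\kF\kM\sd\simeq\sH_\kF\tkP\sd$. By the preceding paragraph, $\phi(\tkP^i)$ is an isomorphism for each $i$, so $\phi(\tkP\sd)$ is a componentwise, hence a quasi-, isomorphism, and chaining these yields
\[
\LF_\kF\kM\sd\simeq\sF_\kF\tkP\sd\xarr{\phi(\tkP\sd)}\sH_\kF\tkP\sd\simeq\RH_\kF\kM\sd
\]
naturally in $\kM\sd\in\cD\sm\kA$.

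The final assertion is a formal consequence of Theorem~\ref{DFG}\,(1): the restriction of $\LF_\kF$ to $\cD\sm\kA$ remains left adjoint to $\DG_\kF$, and being naturally isomorphic to the restriction of $\RH_\kF$, it is simultaneously a right adjoint of $\DG_\kF$ when the target lies in $\cD\sm\kA$. The main technical obstacle I anticipate is the bounded resolution in the first step: local boundedness is built into the definition of a perfect complex, but promoting it to a globally bounded Lp-resolution needs the quasi-compactness of $X$ to produce a uniform projective-dimension bound, together with the fact that syzygies above that bound remain \lop.
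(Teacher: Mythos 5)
Your proof is correct and follows essentially the same route as the paper's: replace the compact object by a bounded complex of locally projective modules, observe that $\sF_\kF$ computes $\LF_\kF$ on such a complex by K-flatness, use Theorem~\ref{48} to show the components are $\sH_\kF$-acyclic so that $\sH_\kF$ computes $\RH_\kF$, and conclude via the componentwise isomorphism $\phi\colon\sF_\kF\to\sH_\kF$ on $\lp\kA$. You supply more detail than the paper (in particular the identification $\kP_\kF\ch\simeq\kA\+\kF$ and the truncation argument giving a globally bounded Lp-complex), which the paper simply asserts.
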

 \begin{proof}
   As $\kA$ has enough \lop\ modules, any complex from $\cD\sm\kA$ is quasi-isomorphic to a finite complex
   $\kC\sd$ such that all $\kC^i$ are from $\lp\kA$. Then $\LF_\kF\kC\sd=\sF_\kF\kC\sd$. On the other hand,
   by Theorem~\ref{48}, $\sR^k\sH_\kF\kC^i=\Ext^k_\kA(\kP_\kF,\kC^i)=0$ for $k\ne0$. Therefore,
   $\RH_\kF\kC\sd=\sH_\kF\kC\sd\simeq\sF_\kF\kC\sd$. 
 \end{proof}

\section{Non-commutative curves}
\label{s5}

\subsection{Generalities}
\label{s51} 

 \begin{definition}\label{51} 
  A \emph{\ncc} is a reduced \ncs\ $(X,\kA)$ such that $X$ is an excellent curve (equidimensional reduced 
  noetherian scheme of dimension $1$) and $\kA$ is coherent and torsion free as $\kO_X$-module. 
  \end{definition} 

 As $X$ is excellent, then $\hat\kA_x$, the $\gM_x$-adic completion of $\kA_x$, is also reduced (has
 no nilpotent ideals). Therefore, for the local study of \nccs\ we can use the usual results from the books
 \citep{cr,rei}. We denote by $\kK=\kK(X)$ the sheaf of full rings of fractions of $\kO_X$ and write 
 $\kK\kM$ instead of $\kK\*_{\kO_X}\kM$ for any $\kO_X$-module $\kM$. In particular, $\kK\kA$ is a $\kK$-algebra. 
 The sheaves $\kK\kM$ are locally constant; the stalks of $\kK$ and $\kK\kA$ are semi-simple rings.  
 The \emph{torsion part} $\trs\kM$ of $\kM$ is defined as the kernel of the natural map $\kM\to\kK\kM$.
 We say that a \cht\ $\kA$-module $\kM$ is \emph{torsion free} if $\trs\kM=0$, and we say that $\kM$ is 
 \emph{torsion} if $\kK\kM=0$. Note that $\trs\kM$ is torsion and $\kM/\trs\kM$ is torsion free.
 We denote by $\tors\kA$ and $\tf\kA$ respectively the full subcategories of $\kA\md$ consisting of 
 torsion and of torsion free modules. 
 We always consider a torsion free module $\kM$ as a submodule of $\kK\kM$.
 In particular, we identify $\kM_x$ with its natural image in $\kK\kM_x$. 
 Note that for every submodule $\kN\sbe\kK\kM$ there is a natural embedding
 $\kK\kN\to\kK\kM$ and we identify $\kK\kN$ with the image of this embedding. 
  A \ncc\ $(X,\kA')$ is said to be an \emph{over-ring} of a \ncc\ $(X,\kA)$ if $\kA\sbe\kA'\sb\kK\kA$.
  Then $\kA'$ is naturally considered as a coherent $\kA$-module. The \ncc\ $(X,\kA)$ is said to be
  \emph{normal} if it has no proper over-rings. Since $X$ is excellent and $\kA$ is reduced, the set
  $\setsuch{x\in X}{\kA_x \text{ is not normal}}$ is finite. Then it follows from \citep{dm} that the set 
  of over-rings of $\kA$ satisfies the maximality condition: there are no infinite strictly ascending 
  chains of over-rings of $\kA$.
 
 Coherent torsion free $\kA$-modules, in particular, over-rings of $\kA$ can be constructed locally.
 
 \begin{lemma}\label{52} 
 Let $\kM$ be a torsion free coherent $\kA$-module.
 \begin{enumerate}
 \item If $\kN$ is a coherent $\kA$-submodule of $\kK\kM$ such that $\kK\kN=\kK\kM$, then $\kN_x=\kM_x$ for 
 almost all $x\in X$.
 
 \item Let $S\sb X\cl$ be a finite set and for every $x\in S$ a finitely generated $\kA_x$-submodule 
 $N_x\sb\kK\kM_x$ is given such that $\kK N_x=\kK\kM_x$. Then there is a unique $\kA$-submodule 
 $\kN\sb\kK\kM$ such that $\kN_x=N_x$ for every $x\in S$ and $\kN_x=\kM_x$ for all $x\notin S$. 
 
 \item If $\kM=\kA$ and all $N_x$ in the preceding item are rings, then $\kN$ is a subalgebra of $\kK\kA$,
 so $(X,\kN)$ is also a \ncc\ and if $N_x\spe\kA_x$ for all $x\in S$, $(X,\kN)$ is an over-ring of $(X,\kA)$.
 \end{enumerate}
 \end{lemma}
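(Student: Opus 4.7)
The plan is to handle the three items in sequence; part (1) provides the finite-support control that feeds into parts (2) and (3). For part (1), I would consider the coherent $\kA$-submodule $\kN+\kM\sbe\kK\kM$; since $\kK\kN=\kK\kM$, both $(\kN+\kM)/\kN$ and $(\kN+\kM)/\kM$ vanish after tensoring with $\kK$, so they are coherent torsion $\kA$-modules. A coherent torsion sheaf on the excellent curve $X$ is supported at finitely many closed points, and outside that support $\kN_x=(\kN+\kM)_x=\kM_x$.

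For part (2), uniqueness is automatic because a coherent subsheaf of the quasi-coherent $\kK\kM$ is determined by its stalks: two such subsheaves with equal stalks have coherent intersection that maps into each one by a stalk-wise isomorphism. For existence, the idea is to build $\kN$ locally on an affine cover $\{U_i\}$ with $|U_i\cap S|\le 1$ and then glue via uniqueness. On $U_i$ disjoint from $S$, set $\kN|_{U_i}=\kM|_{U_i}$. On an affine $U=\spec\qR$ with $U\cap S=\{x\}$, set $\qA=\Ga(U,\kA)$, $M=\Ga(U,\kM)$, $V=\Ga(U,\kK\kM)$ and pick generators $v_1,\dots,v_k\in V$ of $N_x$ over $\qA_x$; form the finitely generated $\qA$-submodule $M^+=M+\sum_j\qA v_j\sb V$. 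By part (1) the torsion quotient $M^+/M$ has finite support, so shrinking $U$ to exclude the finitely many closed points $\ne x$ where $M^+$ differs from $M$ gives $M^+_y=M_y$ for $y\in U\=\{x\}$ together with $M^+_x\spe N_x$. Let $T$ be the skyscraper sheaf at $x$ with value $M^+_x/N_x$, a coherent $\kA$-module of finite length, and define $M^-$ as the kernel of the natural map $M^+\to T$ sending a section to the class of its germ at $x$. Then $M^-$ is coherent with $M^-_x=N_x$ and $M^-_y=M^+_y=M_y$ for $y\ne x$, and these local pieces glue into the desired $\kN$.

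For part (3), the subring condition for $\kN$ is stalk-local: given local sections $s,t\in\kN(U)$, the germ $(st)_y$ lies in $N_y\cdot N_y\sbe N_y$ for $y\in S\cap U$ and in $\kA_y\cdot\kA_y\sbe\kA_y$ for $y\in U\=S$, so $st\in\kN(U)$. Torsion-freeness and reducedness pass from $\kK\kA$ to $\kN$, so $(X,\kN)$ is a \ncc; the hypothesis $N_x\spe\kA_x$ at each $x\in S$ yields $\kN\spe\kA$ stalk-wise and hence globally, producing the over-ring. The principal obstacle is the local existence step in part (2), namely producing a coherent submodule of $\kK\kM$ with a single prescribed non-trivial stalk; everything else is bookkeeping around part (1) and the coherence of skyscraper sheaves at closed points.
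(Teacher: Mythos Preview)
Your argument is correct and is exactly the standard lattice argument the paper is invoking: the paper's own proof reduces to the affine case and then simply cites Bourbaki, \emph{Alg\`ebre commutative}, Ch.~VII, \S3, Th\'eor\`eme~3, whose proof proceeds just as you outline (compare stalks via the torsion quotient, enlarge then cut down by a finite-length skyscraper, glue by uniqueness). One small point in part~(3): the phrase ``reducedness passes from $\kK\kA$ to $\kN$'' deserves a sentence, since subrings of semi-simple rings need not be reduced in general; the correct argument is that a nilpotent ideal $\kI\sbe\kN$ gives a nilpotent ideal $\kK\kI$ of the semi-simple ring $\kK\kN=\kK\kA$, hence $\kK\kI=0$, hence $\kI\sbe\trs\kN=0$.
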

 \begin{proof}
  We can suppose that $X$ is affine. Then the proof just repeats that of 
  \cite[Ch.\,VII,\,\S\,3,\,Theorem 3]{bou} with slight and obvious changes.
 \end{proof}
 
 \begin{lemma}\label{53} 
 Any \ncc\ $(X,\kA)$ has enough invertible modules. Namely, the set 
 \[
  \bL_\kA=\setsuch{\kA\*_{\kO_X}\kL}{\kL \text{\emph{ is an invertible $\kO_X$-module}}}
 \]
 generates $\qoh\kA$ \lb hence, generates $\cD\kA$\rb.
 \end{lemma}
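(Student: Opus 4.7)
The first observation is the tensor-hom adjunction
\[
\hom_\kA(\kA\*_{\kO_X}\kL,\kM)\simeq\hom_{\kO_X}(\kL,\kM),
\]
which reduces the assertion ``$\bL_\kA$ generates $\qoh\kA$'' to the $\kO_X$-module statement: for every non-zero quasi-coherent $\kA$-module $\kM$ there is an invertible $\kO_X$-module $\kL$ admitting a non-zero $\kO_X$-linear morphism $\kL\to\kM$. Since $X$ is noetherian and $\kA$ is coherent over $\kO_X$, every quasi-coherent $\kA$-module is the directed union of its coherent submodules, so one may assume $\kM$ is coherent, and then split the argument according to the torsion part $\trs\kM$.

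If $\trs\kM\ne0$, the sheaf $\trs\kM$ is a coherent torsion $\kO_X$-module on the one-dimensional $X$, hence supported on finitely many closed points; thus $\Gamma(X,\trs\kM)\ne0$, and one takes $\kL=\kO_X$ and a non-zero global section to obtain the required morphism $\kO_X\to\trs\kM\hookrightarrow\kM$. If $\kM$ is torsion-free, embed $\kM\hookrightarrow\kK\kM$, pick any non-zero rational section $s\in\Gamma(X,\kK\kM)$, and form the coherent fractional-ideal sheaf $\kI\sbe\kK$ with stalks
\[
\kI_x=\setsuch{a\in\kK_x}{a\cdot s\in\kM_x}.
\]
Multiplication by $s$ yields a non-zero morphism $\kI\to\kM$. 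By Lemma \ref{52}(1) applied to $\kI$ and $\kO_X$ regarded as coherent submodules of $\kK=\kK\kO_X$, we have $\kI_x=\kO_{X,x}$ for all but finitely many closed points. At each exceptional $x$ I would select a non-zero-divisor $a_x\in\kI_x$ of $\kK_x$ and set $\kL_x=a_x\kO_{X,x}\sb\kI_x$, which is free of rank one over $\kO_{X,x}$; elsewhere put $\kL_x=\kO_{X,x}$. Lemma \ref{52}(2) then assembles a coherent $\kO_X$-submodule $\kL\sb\kK$ which is locally free of rank one, hence invertible, and the composition $\kL\hookrightarrow\kI\xarr{\cdot s}\kM$ is the required non-zero morphism.

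The main technical obstacle lies in the torsion-free case, specifically in producing, at every closed (possibly singular) point $x$ of $X$, an element $a_x\in\kI_x$ which is a non-zero-divisor in $\kK_x$, so that $a_x\kO_{X,x}$ is a free rank-one submodule. This relies on the fact that $X$ is reduced and excellent, so $\kK_x$ is a finite product of fields, combined with a common-denominator clearing showing that $\kI_x$ generates $\kK_x$ over itself and therefore contains regular elements. Once this is settled, the statement that $\bL_\kA$ also generates $\cD\kA$ is a direct consequence of the standard principle that a generating family of the Grothendieck category $\kA\Md$ generates the unbounded derived category $\cD\kA$ as a localising subcategory, an observation that already appeared implicitly in the proof of Theorem \ref{perf}.
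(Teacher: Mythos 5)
Your reduction via adjunction to $\kO_X$ and to coherent $\kM$ is fine, and your local constructions (skyscraper torsion part, clearing denominators of a rational section) are close to what the paper does. However, there is a genuine gap at the very first step: you replace ``$\bL_\kA$ generates $\qoh\kA$'' by the condition \emph{``every non-zero $\kM$ receives a non-zero morphism from some $\kL$''}. That is strictly weaker than generation. A family $S$ generates an abelian category if the functors $\hom(G,-)$, $G\in S$, are jointly faithful; equivalently, for every \emph{proper subobject} $\kM'\subsetneq\kM$ there must exist $G\in S$ and $f:G\to\kM$ with $\im f\not\sbe\kM'$. Your condition is the special case $\kM'=0$, and since the $\kA\*_{\kO_X}\kL$ are not projective, it does not imply the general case: a map $G\to\kM/\kM'$ need not lift to $\kM$. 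A quick counterexample to the implication you are tacitly using: for $\La=k[\eps]/(\eps^2)$ and $S=\{k\}$, every non-zero $\La$-module has non-zero socle, so receives a non-zero map from $k$, yet $k$ is not a generator of $\La\Md$ (the surjection $\La\twoheadrightarrow k$ dies under $\hom_\La(k,-)$).

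The paper's proof works from the outset with a proper submodule $\kM'\sb\kM$. It splits on whether $\trs\kM\sbe\kM'$, not on whether $\trs\kM=0$: if $\trs\kM\not\sbe\kM'$, it lifts an element of $\trs\kM_x\setminus\kM'_x$ to a global section (same skyscraper argument you use, but applied at the right place); if $\trs\kM\sbe\kM'$, it uses $\ext^1_{\kO_X}(\kL,\trs\kM)=0$ for invertible $\kL$ to pass to $\kM/\trs\kM$ and then chooses the rational section $\vi(1)=u_y$ with $u_y\in\kM_y\setminus\kM'_y$ at a specific point $y$, so that after clearing denominators at the remaining finitely many bad points (where $y$ is not among them), the image of the resulting $\kL\to\kM$ is visibly not contained in $\kM'$ at $y$. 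Your ``main technical obstacle'' (finding non-zero-divisors in $\kI_x$) is handled the same way in both proofs and is not the real difficulty; what is missing is tracking $\kM'$ through the construction. Incorporating $\kM'$ as above would repair the argument.
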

 \begin{proof}
  We must show that if $\kM'\sb\kM$ is a proper submodule, there is a homomorphism $f:\kL\to\kM$ such that
  $\im f\not\sbe\kM'$. As $\hom_\kA(\kA\*_{\kO_X}\kL,\kM)\simeq\hom_{\kO_X}(\kL,\kM)$, we can suppose that
  $\kA=\kO_X$. Moreover, as every $\kA$-module is a direct limit of its coherent submodules, we can suppose
  that $\kM$ is coherent. Let first $\kM'\not\spe\trs\kM$. Choose $x\in X\cl$ such that 
  $\trs\kM_x\not\sbe\kM'_x$ and let $u_x\in\trs\kM_x\=\kM'_x$. There is a global section 
  $u\in\Ga(X,\trs\kM)\sbe\Ga(X,\kM)$ such that $u_x$ is its image in $\kM_x$. Then there is a homomorphism 
  $f:\kO_X\to\kM$ such that $f\textit{1}=u$, so $\im f\not\sbe\kM'$. 
  
  Let now $\kM'\spe\trs\kM$. Since $\ext^1_{\kO_X}(\kL,\trs\kM)=0$ for any \lop\ module $\kL$, 
  the map $\hom_{\kO_X}(\kL,\kM)\to\hom_{\kO_X}(\kL,\kM/\trs\kM)$ is surjective. Hence, we can suppose that
  $\kM$ is torsion free. Let $\kM_y\ne\kM'_y$ for some $y\in X\cl$ and $u_y\in\kM_y\=\kM'_y$. There is a
  homomorphism $\vi:\kK\to\kK\kM$ such that $\vi\textit{1}=u_y$. Let $\kN=\vi(\kO_X)$. The set 
  $S=\setsuch{x\in X\cl}{\kN_x\not\sbe\kM_x}$ is finite; moreover, $y\notin S$. For every $x\in S$ there is
  an ideal $L_x\sbe\kO_{X,x}$ such that $L_x\simeq\kO_{X,x}$ and $\vi(L_x)\sbe\kM_x$. By Lemma~\ref{52} there is 
  an ideal $\kL\sbe\kO_X$ such that $\kL_x=L_x$ for $x\in S$ and $\kL_x=\kO_{X,x}$ otherwise. It is an
  invertible ideal, $\vi(\kL)\sbe\kM$ and $\vi(\kL)\not\sbe\kM'$.
 \end{proof}
 
  We will use the duality for left and right \cht\ torsion free $\kA$-modules established 
 in the following theorem.
 
\begin{theorem}\label{canon} 
\begin{enumerate}
\item   There is a \emph{canonical $\kA$-module}, i.e. such a module $\om_\kA\in\tf\kA$ that 
 $\idim_{\kA}\om_\kA=1$ and $\END_\kA\om_\kA\simeq\kA\op$ \lb so $\om_\kA$ can be considered as an 
 \emph{$\kA$-bimodule}\rb. Moreover, $\om_\kA$ is isomorphic as a bimodule to an ideal of $\kA$.
 
 \smallskip\noindent
 \emph{We denote by $\kM^*$, where $\kM\in\kA\Md$ (or $\kM\in\kA\op\Md$) the $\kA\op$-module
 (respectively, $\kA$-module) $\Hom_\kA(\kM,\om_\kA)$ (respectively, $\Hom_{\kA\op}(\kM,\om_\kA)$).}
 
 \smallskip
\item The natural map $\kM\to\kM^{**}$ is an isomorphism for every $\kM\in\tf\kA$ \lb or
 $\kM\in\tf\kA\op$\rb\ and the functors $\kM\mps\kM^*$ establish an exact duality of the 
 categories $\tf\kA$ and $\tf\kA\op$. Moreover, if $\kM\in\kA\md$, then 
 $\kM^{**}\simeq\kM/\trs\kM$.
\end{enumerate}
\end{theorem}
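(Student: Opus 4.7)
\emph{Overall plan.} The plan is to construct $\om_\kA$ explicitly as an $\kO_X$-dual of $\kA$ twisted by a dualizing sheaf, and then to verify all properties by localizing at closed points, where everything reduces to classical results on canonical modules of orders over one-dimensional local rings. Since part~(1) eventually requires embedding $\om_\kA$ as a two-sided ideal of $\kA$, I would first invoke Proposition~\ref{23} to reduce to the case where $(X,\kA)$ is central, so $\cen\kA = \kO_X$; this reduction is essential at the last step. Because $X$ is an excellent reduced one-dimensional scheme it is Cohen--Macaulay, hence admits a dualizing sheaf $\om_X$ (concentrated in degree $0$) with $\idim_{\kO_X}\om_X = 1$. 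I then propose to set
\[
 \om_\kA \;=\; \Hom_{\kO_X}(\kA, \om_X),
\]
equipped with the $\kA$-bimodule structure inherited from the two-sided $\kA$-module structure on $\kA$ in the inner $\Hom$.

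\emph{Properties of $\om_\kA$ for part~(1).} Torsion-freeness is immediate. The claims $\idim_\kA\om_\kA = 1$ and $\END_\kA\om_\kA\simeq\kA\op$ are local (Corollary~\ref{44}); at a closed point $x$, the adjunction
\[
 \Hom_{\kA_x}\bigl(-,\,\Hom_{\kO_{X,x}}(\kA_x,\om_{X,x})\bigr)\simeq\Hom_{\kO_{X,x}}(-,\om_{X,x})
\]
combined with $\idim_{\kO_{X,x}}\om_{X,x}=1$ gives the injective dimension, and right multiplication identifies $\kA_x\op$ with $\End_{\kA_x}\om_{\kA,x}$ --- these are standard facts about the canonical module of an order over a one-dimensional Gorenstein local ring. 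For the bimodule embedding into $\kA$, I would pass to generic fibres: at each generic point $\eta$ of $X$, $\kK(\eta)\kA$ is central (by the reduction above) and semisimple (because $\kA$ is reduced), hence separable and therefore Frobenius, giving a bimodule isomorphism $\kK(\eta)\om_\kA\simeq\kK(\eta)\kA$. Gluing over the generic points yields $\kK\om_\kA\simeq\kK\kA$ as $\kK\kA$-bimodules, so $\om_\kA$ sits inside $\kK\kA$ as a coherent sub-bimodule. By Lemma~\ref{52} there is a central non-zero-divisor $s\in\Ga(X,\kO_X)$ with $s\om_\kA\sbe\kA$; the image $s\om_\kA$ is the desired two-sided ideal of $\kA$, isomorphic as a bimodule to $\om_\kA$.

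\emph{The duality in part~(2).} Once part~(1) is in place, biduality on $\tf\kA$ reduces to a local statement. At each $x\in X\cl$ a torsion-free $\kA_x$-module $\kM_x$ is maximal Cohen--Macaulay over $\kO_{X,x}$, and the properties of $\om_\kA$ together with $\idim\om_\kA=1$ give the vanishing $\Ext^i_{\kA_x}(\kM_x,\om_{\kA,x})=0$ for $i>0$ and the isomorphism $\kM_x\ito\kM_x^{**}$; this is the classical Cohen--Macaulay reflexivity over an order. Exactness of $(-)^*$ on $\tf\kA$ is then a formal consequence of this $\Ext$-vanishing. For general coherent $\kM$, apply $\Hom_\kA(-,\om_\kA)$ to the sequence $0\to\trs\kM\to\kM\to\kM/\trs\kM\to 0$ and note that $\Hom_\kA(\trs\kM,\om_\kA)=0$ since $\om_\kA$ is torsion-free; hence $\kM^* = (\kM/\trs\kM)^*$, and $\kM^{**}\simeq(\kM/\trs\kM)^{**}\simeq\kM/\trs\kM$ by the torsion-free case.

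\emph{Main obstacle.} The step that is not a routine local calculation is the bimodule realisation $\om_\kA\hookrightarrow\kA$. It hinges on the Frobenius property of the generic fibres $\kK(\eta)\kA$, which can fail for a semisimple algebra over a field with inseparable centre; this is precisely why the reduction to the central case is unavoidable, because there the centre of $\kK(\eta)\kA$ equals $\kK(\eta)$ and the algebra is automatically separable. All other ingredients --- the injective dimension, the endomorphism ring, and the Cohen--Macaulay reflexivity for torsion-free modules --- are standard local computations for orders over one-dimensional excellent Gorenstein local rings, transferred to the sheaf setting via Corollary~\ref{44}.
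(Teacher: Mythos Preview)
Your approach is essentially the paper's: set $\om_\kA=\Hom_{\kO_X}(\kA,\om_X)$ and reduce everything to the commutative dualizing sheaf via the adjunction $\Hom_\kA(-,\om_\kA)\simeq\Hom_{\kO_X}(-,\om_X)$. The paper differs only in that it \emph{constructs} $\om_X$ explicitly, by choosing at each closed point a canonical ideal $\om_x\sbe\kO_{X,x}$ (using excellence and \cite[Korollar~2.12]{hk}) and then invoking Lemma~\ref{52} to glue these into a global ideal $\om_X\sbe\kO_X$; you simply cite existence of a dualizing sheaf. Conversely, the paper is terse on the claim that $\om_\kA$ is a two-sided ideal of $\kA$, while you spell out the mechanism via the generic fibre.

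Two small corrections to your version. First, your ``main obstacle'' is not one: every finite-dimensional semisimple algebra over a field is \emph{symmetric} (not merely Frobenius), without any separability hypothesis --- for a simple factor $\Mat(n,\qD)$ with centre $L$, take any nonzero $K$-linear functional $\mu:L\to K$ and use $\mu\circ\mathrm{Trd}\circ\mathrm{tr}$, which is symmetric and nondegenerate regardless of whether $L/K$ is separable. So the reduction to the central case via Proposition~\ref{23} is not needed for the bimodule isomorphism $\kK\om_\kA\simeq\kK\kA$. Second, once $\om_\kA$ is embedded as a full-rank sub-bimodule of $\kK\kA$, there is in general no global central non-zero-divisor $s\in\Ga(X,\kO_X)$ with $s\om_\kA\sbe\kA$ (think of $X=\mP^1$, where $\Ga(X,\kO_X)=\aK$); Lemma~\ref{52} does not produce such an $s$. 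The fix is immediate: by Lemma~\ref{52}(1) the embedded $\om_\kA$ agrees with $\kA$ at almost all closed points, and at the finitely many exceptions you clear denominators locally and package the local multipliers into an invertible ideal sheaf $\kJ\sbe\kO_X$ via Lemma~\ref{52}(2). Then $\kJ\cdot\om_\kA\sbe\kA$ is a two-sided ideal, and since $\kJ\om_X$ is again a canonical $\kO_X$-module you may absorb $\kJ$ into the choice of $\om_X$.
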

\begin{proof}
 Each local ring $\kO_x=\kO_{X,x}$ is excellent, so its integral closure in $\kK_x$ is finitely
 generated and its completion $\hat\kO_x$ is reduced. Therefore $\kO_x$ has a canonical module 
 $\om_x$ which can be considered as an ideal in $\kO_x$ \cite[Korollar~2.12]{hk}. Moreover, $\kO_x$ 
 is normal for almost all $x\in X\cl$ and in this case we can take $\om_x=\kO_{X,x}$. By Lemma~\ref{52}, 
 there is an ideal $\om_X\sbe\kO_X$ such that $\om_{X,x}=\om_x$ for each $x\in X$. Then 
 $\idim_{\kO_X}\om_X=\sup\set{\idim_{\kO_{X,x}}\om_x}=1$. As the natural map
 $\kO_{X,x}\to\End_{\kO_{X,x}}\om_x$ is an isomorphism for each $x\in X$,
 the natural map $\kO_X\to\END_{\kO_X}\om_X$ is an isomorphism too.
 Therefore, $\om_X$ is a canonical $\kO_X$-module. Then it is known that 
 the functor $\kM\mps\kM^*=\Hom_{\kO_X}(\kM,\om_X)$
 is an exact self-duality of $\tf\kO_X$ and the natural map $\kM\to\kM^{**}$ is an isomorphism.
 Set now $\om_\kA=\Hom_{\kO_X}(\kA,\om_X)$. Then $\Hom_\kA(\kM,\om_\kA)\simeq\Hom_{\kO_X}(\kM,\om_X)$
 for any $\kA$-module $\kM$, whence all statements of the theorem follow.
\end{proof}

  As usually, we say that two \ncss\ $(X,\kA)$ and $(Y,\kB)$ are \emph{Morita equivalent} if their categories
 of \qch\ modules are equivalent. A coherent \lop\ $\kA$-module $\kP$ is said to be a \emph{local progenerator} 
 if $\kP_x$ is a progenerator for $\kA_x$ for all $x\in X$, that is $\kP_x$ is projective over $\kA_x$ and there is a surjection
 $r\kP_X\to\kA_x$ for some $r$. It follows from Theorem~\ref{FG} that then $(X,\kA)$ 
 is Morita equivalent to $(X,\kE)$, where $\kE=(\END_\kA\kP)\op$.
 
 \begin{theorem}\label{55} 
 \begin{enumerate}
 \item   Let $(X,\kA)$ and $(X,\kB)$ are two \nccs\ such that $\kA_x$ is Morita equivalent to $\kB_x$ for every
  $x\in X\cl$. Then $(X,\kA)$ and $(X,\kB)$ are Morita equivalent.
  
  \item  Let now $(X,\kA)$ and $(Y,\kB)$ be two central \nccs\ finite over a field. If they are Morita equivalent,
  there is an isomorpism $\tau:X\ito Y$ such that, for every points $x\in X$ and $y=\tau(x)$, the rings 
  $(\tau^*\kB)_x$ and $\kA_x$ are Morita equivalent.
 \end{enumerate}

 \end{theorem}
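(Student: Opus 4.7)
For part (1), the plan is to construct a global local progenerator $\kP$ over $\kA$ with $(\END_\kA\kP)^\op\simeq\kB$; by Theorem~\ref{FG} such a $\kP$ (whose kernel $\kP^\perp=0$, since $\kP$ is a stalkwise progenerator) realizes the Morita equivalence $\kA\Md\simeq\kB\Md$. First I would invoke the hypothesis to select, for each $x\in X\cl$, a progenerator $P_x$ of $\kA_x$ with $(\End_{\kA_x}P_x)^\op\simeq\kB_x$. To synchronize these into a coherent sheaf, pass to the generic fiber: the stalks of $\kK\kA$ at the generic points of $X$ are semisimple since $X$ is reduced, and stalkwise Morita equivalence forces $\kK\kA$ and $\kK\kB$ to be Morita equivalent. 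Fix a $\kK\kA$-progenerator $V$ with $\End_{\kK\kA}(V)^\op\simeq\kK\kB$. Then each $P_x$ is isomorphic to an $\kA_x$-full lattice in $V$, and at all but finitely many $x$ we may take the standard choice $P_x=V\cap(\kA_x)^{\oplus n}$. Lemma~\ref{52}(2) assembles these local lattices into a unique coherent $\kA$-submodule $\kP\sb V$ with $\kP_x=P_x$ for every $x$. Local projectivity and the generator property are stalkwise, so $\kP$ is indeed a local progenerator. Both $\kB$ and $(\END_\kA\kP)^\op$ are coherent $\kO_X$-subalgebras of $\kK\kB$ with identical stalks, hence they coincide as subsheaves.

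For part (2), given a Morita equivalence $\Phi\se\kA\Md\ito\kB\Md$, the plan is to recover the scheme isomorphism $\tau\se X\ito Y$ first, and then reduce to a stalkwise assertion. Proposition~\ref{center} together with the centrality hypothesis yields a ring isomorphism $\Ga(X,\kO_X)\simeq\Ga(Y,\kO_Y)$ from the induced isomorphism on centers of the module categories. Simple objects of $\kA\md$ are supported at individual closed points, corresponding to simple modules over the fiber algebras $\kA(x)$; since $\Phi$ is compatible with the central action, it maps simples to simples and induces a bijection $X\cl\to Y\cl$ under which the central elements act with matching eigenvalues. The finite-type-over-a-field hypothesis then allows this combinatorial datum, together with the global center isomorphism, to be packaged into local isomorphisms $\kO_{X,x}\simeq\kO_{Y,\tau(x)}$ that glue to a scheme isomorphism $\tau\se X\ito Y$. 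Transporting $\kB$ to $\tau^*\kB$ on $X$ and localizing $\Phi$ at each closed point $x$ through the induced $\kO_{X,x}$-action deliver the asserted stalkwise Morita equivalences $\kA_x\Md\simeq(\tau^*\kB)_x\Md$.

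The main technical obstacle in part (1) is coordinating the local isomorphisms $\End_{\kA_x}(P_x)^\op\simeq\kB_x$ so that they assemble into a single global one; this is handled by pinning every identification down inside the common ambient algebra $\End_{\kK\kA}(V)^\op=\kK\kB$, so that the endomorphism sheaf $(\END_\kA\kP)^\op$ is automatically realized as the correct subsheaf of $\kK\kB$. The main obstacle in part (2) is promoting the global center isomorphism and the closed-point bijection to a genuine scheme isomorphism $\tau$; this is where the finiteness hypothesis is essential, as it guarantees that $X$ is determined by its closed points together with their local rings and hence that the combinatorial and central data are enough to recover $\tau$ uniquely.
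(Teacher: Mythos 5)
Your argument for part (1) is essentially the same as the paper's: reduce to constructing a global local progenerator $\kP\sb\kV$ by first fixing a canonical choice of lattice at almost all points, then modifying at the finitely many bad points via Lemma~\ref{52}(2). The paper's way of getting the canonical choice is slightly different — it takes a normal over-ring $\kA'\supseteq\kA$ and a coherent $\kA'$-lattice $\kM\sb\kV$, so that $\kM$ is automatically a local progenerator for $\kA'$ (maximal orders are hereditary) — whereas your formula $P_x=V\cap(\kA_x)^{\oplus n}$ implicitly presupposes an embedding $V\hookrightarrow(\kK\kA)^{\oplus n}$ and does not obviously produce a projective lattice at the non-normal points; it works only after restricting to the normal locus, which is exactly what the paper's $\kA'$ trick packages cleanly. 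You are right that the real subtlety is coordinating the stalkwise isomorphisms $\kB_x\simeq(\End_{\kA_x}P_x)^\op$ inside a fixed identification $\kK\kB\simeq\END_{\kK\kA}(\kV)^\op$; the paper glosses over this too (its definition of the finite set $S$ via ``$\kB_x\ne\kB'_x$'' already presumes it). To actually close this gap one needs to adjust each $P_x$ by a unit of $\END_{\kK\kA}(\kV)^\op$ — a Skolem--Noether-type argument — so that $(\End_{\kA_x}P_x)^\op$ lands on the image of $\kB_x$ as a subalgebra, not merely something abstractly isomorphic to it. You flag this issue but do not actually carry out the adjustment.

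For part (2) there is a more substantive divergence. The paper disposes of it with a citation to Section~6 of Artin--Zhang. Your sketch starts from Proposition~\ref{center}, giving $\End\1_{\kA\Md}\simeq\Ga(X,\cen\kA)=\Ga(X,\kO_X)$, and tries to build $\tau$ from the induced isomorphism $\Ga(X,\kO_X)\simeq\Ga(Y,\kO_Y)$ together with the bijection on closed points coming from simples. This does not work as stated in the non-affine case: if $X$ is a projective curve over an algebraically closed field, $\Ga(X,\kO_X)$ is just the base field and carries no information about $X$. To recover the scheme you would need to extract the \emph{sheaf} $\kO_X$, or equivalently the local rings $\kO_{X,x}$, from the category $\kA\Md$ — for instance via the center of the category localized along the torsion theory supported at $x$, or via the center of $\kA\md/\kA\md_{\text{supp}\ne x}$ — and then show these glue to a scheme isomorphism. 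That is precisely the content of the Artin--Zhang reconstruction the paper invokes; your proposal names the right ingredients (central action, support of simples, finite-type hypothesis) but does not supply the mechanism that actually produces the local rings, so there is a genuine gap here.
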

 \begin{proof}
  \textit{1} If $\kA_x$ and $\kB_x$ are Morita equivalent, there is a progenerator $P_x$ for $\kA_x$ such that
  $\kB_x\simeq(\End_{\kA_x}P_x)\op$. There is a $\kK\kA$-module $\kV$ such that $\kV\simeq\kK P_x$ for
  all $x\in X\cl$. Choose a normal over-ring $\kA'$ of $\kA$ and a coherent $\kA'$-submodule $\kM\sb\kV$
  such that $\kK\kM=\kV$. Then $\kM$ is a local progenerator for $\kA'$. Set $\kB'=(\END_{\kA'}\kM)\op$
  and $S=\setsuch{x\in X\cl}{\kA_x\ne\kA'_x \text{ or } \kB_x\ne\kB_x'}$. The set $S$ is finite, so there is an
  $\kA$-submodule $\kP\sb\kV$ such that $\kP_x=P_x$ for $x\in S$ and $\kP_x=\kM_x$ for $x\notin S$.
  Then $\kP$ is a local progenerator for $\kA$ and $\kB\simeq(\END_\kA\kP)\op$.
  
  \smallskip
  \textit{2} follows from \cite[Section~6]{az}.
 \end{proof}
 
\subsection{Hereditary \nccs}
\label{s52}   
 
 We call a noetherian \ncs\ $(X,\kA)$ \emph{hereditary} if all localizations $\kA_x$ are hereditary rings, 
 i.e. $\gdim\kA_x=1$. Then $\gdim\kA=1$ too, so $\ext^2_\kA(\kM,\kN)=0$ for all $\kA$-modules $\kM,\kN$. 
 Suppose that $(X,\kA)$ is a hereditary \ncc. Then any torsion free coherent $\kA$-module $\kM$ is \lop, 
 so $\Ext^1_\kA(\kM,\kN)=0$ for any $\kA$-module $\kN$. If $\kN$ is \cht\ and torsion, it implies that 
 $\ext^1_\kA(\kM,\kN)=0$. Therefore, every \cht\ $\kA$-modules $\kM$ splits as
 $\kM=\trs\kM\+\kM'$, where $\kM'$ is torsion free, hence \lop. If a central \ncc\ $(X,\kH)$ is hereditary, 
 then $X$ is smooth. There is an effective description of hereditary \nccs\ up to Morita equivalence. 
 
 First consider the case when $X=\spec\qO$, where $\qO$ is a complete \dvr\ with the field of fractions
 $\qK$, the maximal ideal $\gM$ and the residue field $\aK=\qO/\gM$. Let $\qH$ be a hereditary reduced
 $\qO$-algebra which is torsion free as $\qO$-module. Then $\qK\qH\simeq\Mat(n,\qD)$, where $\qD$ is a finite 
 dimensional division algebra over $\qK$. There is a unique maximal $\qO$-order $\De\sb\qD$ \cite[Theorem~12.8]{rei}.
 It contains a unique maximal ideal $\dM$, which is both left and right principal ideal. Let $n=\sum_{i=1}^kn_i$
 for some positive integers $n_i$, $\fN=\row nk$ and $\qH(\fN,\qD)$ be the subring of $\Mat(n,\De)$ consisting
 of $k\xx k$ block matrices $(A_{ij})$ such that $A_{ij}$ is of size $n_i\xx n_j$ and if $j>i$ all coefficients
 of $A_{ij}$ are from $\dM$. Let also $L=\De^n$ considered as $\qH(\fN,\qD)$-module and $L_i$ be the submodule 
 in $L$ consisting of such vectors $\row\al n$ that $\al_j\in\dM$ for $j\le\sum_{q=1}^in_q$. In particular, 
 $L_0=L$ and $L_k=\dM^n\simeq L$. If necessary, we denote $L_i=L_i(\qH)$.
 
 \begin{theorem}{\rm\cite[Theorem~39.14]{rei}}\label{56} \
 Let $\qO$ be a complete \dvr.
 \begin{enumerate}
 \item  Every connected hereditary $\qO$-order is isomorphic to $\qH(\fN,\qD)$ for some tuple
 $\fN=\row nk$, which is uniquely determined up to a cyclic permutation.
 
 \item  Hereditary orders $\qH(\fN,\qD)$ and $\qH(\fN',\qD')$ are \meq\ \iff $\qD\simeq\qD'$ and $\fN$ and $\fN'$
 are of the same length.
 
 \item $L_i\ (0\le i<k)$ are all indecomposable projective $\qH(\fN,\qD)$-modules and $U_i=L_i/L_{i+1}$ are all simple 
 $\qH(\fN,\qD)$-modules \lb up to isomorphism\rb.
 \end{enumerate}
 \end{theorem}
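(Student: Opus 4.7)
My plan is to follow the classical strategy underlying Reiner's proof, organized around the local structure of indecomposable projectives. First I would reduce to the case where $\qK\qH$ is a simple $\qK$-algebra. Since $\qH$ is a hereditary $\qO$-order and $\qK\qH$ is semisimple, a standard argument (any central idempotent of $\qK\qH$ lifts to a central idempotent of $\qH$ because $\qH$ is integrally closed in $\qK\qH$ componentwise) shows that $\qH$ decomposes as a product according to the Wedderburn decomposition of $\qK\qH$. Connectedness then forces $\qK\qH \simeq \Mat(n,\qD)$ for a single finite dimensional division algebra $\qD$ over $\qK$. The unique maximal $\qO$-order $\De\sb\qD$ (existing because $\qO$ is complete, by \cite[Theorem~12.8]{rei}) has a principal maximal ideal $\dM$.

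Next I would analyze the projective modules. Since $\qO$ is complete, $\qH$ is semiperfect, so by Krull--Schmidt the regular representation decomposes as $\qH=\bop_i P_i^{d_i}$ into indecomposable projectives. Tensoring with $\qK$, each $P_i$ becomes a multiple of the unique simple $\qK\qH$-module $L=\qD^n$; combined with hereditary (every submodule of a projective is projective), this lets us identify each $P_i$ with a full $\De$-sublattice of $L$, and up to isomorphism these lattices are totally ordered by inclusion. The main step is then to show that after reordering, the indecomposables form a cyclic chain
\[
L_0\spe L_1\spe\dots\spe L_{k-1}\spe L_k=\dM L_0,
\]
with each quotient $U_i=L_i/L_{i+1}$ simple. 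The key input is heredity: any proper $\qH$-submodule of $L_i$ is itself projective, and a minimality argument among the finitely many isomorphism types of indecomposables produces $L_{i+1}$. Multiplication by a uniformizer of $\dM$ gives a periodicity cycling $L_0\mapsto L_k$, which provides both the cyclic structure and the ambiguity in how one starts the cycle.

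From the chain I would read off the structure constants: setting $n_i=\dim_{\De/\dM}U_i$, a direct computation (expressing the endomorphism ring of $\bop_i L_i$ in matrix block form with respect to the basis of $L$ adapted to the filtration) identifies $\qH\simeq\qH(\fN,\qD)$, and cyclic permutations arise precisely from the choice of starting point $L_0$. For part (2), Morita equivalence preserves the number of isomorphism classes of simples (which is $k$) and the endomorphism division ring of each simple (which is $\De/\dM$, and hence recovers $\qD$ up to isomorphism by Hensel's lemma lifting properties of complete DVRs); conversely, $\qH(\fN,\qD)$ and $\qH(\fN',\qD)$ with $|\fN|=|\fN'|=k$ have Morita-equivalent basic subalgebras built from $\De$ and $\dM$ alone, independent of the $n_i$. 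Part (3) is then immediate from the chain description.

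The main obstacle is the cyclic chain statement in the middle paragraph. Heredity alone gives a partial order on indecomposable projectives via inclusion of their embeddings in $L$; to upgrade this to a cyclic total order requires exploiting both the complete local structure of $\De$ (so that $\dM$ is principal and torsionfree quotients of lattices behave well) and the fact that no two distinct $P_i$ can have the same image in $L/\dM L$. Once one is willing to invoke this (or just cite \cite[Theorem~39.14]{rei}), the rest is essentially bookkeeping.
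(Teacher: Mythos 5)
The paper does not prove Theorem~\ref{56}; it cites the result verbatim from \cite[Theorem~39.14]{rei}, so there is no in-paper argument to compare against. Your outline is a reasonable reconstruction of the classical proof, and you correctly identify the cyclic chain $L_0\supset L_1\supset\dots\supset L_k=\dM L_0$ as the point where the real work lies.

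There is, however, a genuine gap in your treatment of part (2). You assert that the Morita-invariant datum ``endomorphism division ring of each simple'' equals $\De/\dM$ and that this ``recovers $\qD$ up to isomorphism by Hensel's lemma lifting properties of complete DVRs.'' That inference is false: the residue algebra $\De/\dM$ does not determine $\qD$. For instance, when $\qK$ is a non-archimedean local field with residue field of order $q$, there are several pairwise non-isomorphic central division algebras of index $d$ over $\qK$ (one for each Hasse invariant $r/d$ with $r$ prime to $d$), and every one of them has maximal order with residue field $\mF_{q^d}$. The correct way to deduce $\qD\simeq\qD'$ from a Morita equivalence between $\qH(\fN,\qD)$ and $\qH(\fN',\qD')$ is to extend scalars: the progenerator bimodule is finitely generated projective on each side, so the equivalence preserves $\qO$-torsion-freeness and, after tensoring with $\qK$, induces a Morita equivalence between $\Mat(n,\qD)$ and $\Mat(n',\qD')$; Artin--Wedderburn then forces $\qD\simeq\qD'$. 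With that repair, and modulo the cyclic-chain step you already flag as needing the full local analysis, the rest of the outline is sound bookkeeping.
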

  
 Let now $(X,\kH)$ be a connected central hereditary \ncc. Then $\kK\kH$ is a central simple $\kK$-algebra:
 $\kK\kH=\Mat(n,\kD)$, where $\kD$ is a central division algebra. For every closed point $x\in X$ the completion
 $\hat\kD_x$ is isomorphic to $\Mat(m_x,\qD_x)$ for some central division algebra $\qD_x$ over $\hat\kK_x$
 and some integer $m_x=m_x(\kD)$.
 Therefore, for every closed point $x\in X$, $\hat\kH_x$ is isomorphic to $\qH(\fN,\qD_x)$ for some
 $\fN=\row nk$, where $\sum_{i=1}^kn_i=m_xn$. Thus Theorems~\ref{55} and \ref{56} give the following result.

 \begin{theorem}\label{57} 
  A central hereditary \ncc\ $(X,\kH)$ is determined up to \meqc\ by a central division $\kK$-algebra 
 $\kD$ and a function $\ka:X\cl\to\mN$ such that $\ka(x)=1$ for almost all $x\in X\cl$.
 
 \medskip
 \emph{Actually, $\ka(x)$ is the number of non-isomorphic simple $\kH$-modules $\kU$ such that  $\supp\kU=\{x\}$.}
\end{theorem}   
 
 \begin{remark}\label{58} 
 Representatives of a class given by $\kD$ and $\ka$ can be obtained as follows. Choose an integer $n$ such that
 $\ka(x)\le nm_x(\kD)$ for all $x\in X\cl$. There is an $\kO_x$-order $\qH_x$ in $\Mat(n,\kD)$ such that
 $\hqH_x=\qH(\fN_x,\qD_x)$ for some $\fN_x=(n_{1,x},n_{2,x},\dots,n_{\ka(x),x})$. Fix a normal \ncc\ $(X,\bde)$ such
 that $\kK\bde=\kD$. Then we can define $\kH=\kH(\fN,\kD)$ as the \ncc\ such that $\kK\kH=\Mat(n,\kD)$, 
 $\kH_x=\Mat(n,\bde_x)$ if $\ka(x)=1$ and $\kH_x=\qH_x$ if $\ka(x)>1$.   
 \end{remark}

 Let $S=\setsuch{x\in X}{\ka(x)>1}$, $\kL=\bde^n$ considered as $\kH$-module. Consider the submodules $\kL_{x,i}$ 
 ($0\le i\le\ka(x)$) such that $(\widehat{\kL_{x,i}})_x=L_i(\hqH_x)$ and $(\kL_{x,i})_y=\kL_y$ if $y\ne x$.
 Let also $U_{x,i}=\kL_{x,i}/\kL_{x,i+1}$ ($0\le i<\ka(x)$). Then $U_{x,i}$ are all simple $\kH$-modules (up to
 isomorphism). Note that $\kL_{x,0}=\kL$ for every point $x$.
 
 \begin{theorem}\label{59} 
 Let $\kH=\kH(\fN,\kD)$.
 \begin{enumerate}
 \item   The set 
  $$\mL_\kH=\{\kL\}\cup\setsuch{\kL_{x,i}}{x\in S,\,1\le k\le\ka(x)}$$
  classically generates $\cD\sm\kH$, hence generates $\cD\kH$ \lb see \cite[Theorem~2.2]{lun}\rb.
  
  \item  $\cD\kH\simeq\cD\mA$, where $\mA$ denotes the DG-category with the set of objects $\mL_\kA$  
  and $\mA(\kL',\kL'')=\rhom_\kA(\kL',\kL'')$.
 \end{enumerate} 
 \end{theorem}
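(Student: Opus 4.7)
The plan is to prove (1) by showing that the thick subcategory $\cT=\langle\mL_\kH\rangle\sbe\cD\sm\kH$ contains every coherent $\kH$-module, and hence coincides with $\per\kH=\cD\sm\kH$. Since any perfect complex has bounded coherent cohomology, $\per\kH$ sits inside the thick subcategory of $\cD\kH$ generated by $\kH\md$ (built up from cohomologies by truncation triangles); conversely $\mL_\kH\sbe\per\kH$ by construction, so the two inclusions combine to give the desired equality once every coherent module is placed in $\cT$. Any coherent $\kH$-module $\kM$ sits in the short exact sequence $0\to\trs\kM\to\kM\to\kM/\trs\kM\to 0$, presenting $\kM$ as the cone (up to shift) of a morphism between a torsion module and a torsion-free one, so it suffices to handle coherent torsion and torsion-free modules separately.

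For the torsion case, a coherent torsion $\kH$-module has a finite composition series whose factors are simple; by Theorem~\ref{56}\,(3) these simples are precisely $U_{x,i}=\kL_{x,i}/\kL_{x,i+1}$ (with the convention $\kL_{x,\ka(x)}=\kL$). Each $U_{x,i}$ is the cone of the inclusion $\kL_{x,i+1}\hookrightarrow\kL_{x,i}$ between two objects of $\mL_\kH$, hence belongs to $\cT$, and iterated extensions place all coherent torsion modules in $\cT$.

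The main step is the torsion-free case, which I handle by presenting a given torsion-free coherent $\kM$ as a modification of $\kL^r$ by torsion at finitely many points. Since $\kK\kH\simeq\Mat(n,\kD)$ is simple, $\kK\kM\simeq(\kK\kL)^r$ for some $r$, so I would fix an embedding $\kM\hookrightarrow(\kK\kL)^r$ in which $\kL^r$ also sits naturally. By Lemma~\ref{52}\,(1) there is a finite set $T\sb X\cl$ outside of which $\kM_y=\kL^r_y$. Setting $\kN=\kM\cap\kL^r$ inside $(\kK\kL)^r$ produces a coherent $\kH$-submodule whose localization at $y\notin T$ equals $\kL^r_y=\kM_y$ and whose localization at $x\in T$ is the intersection of two lattices in $(\kK\kL)^r_x$, hence of finite colength in each. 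Therefore both quotients $\kL^r/\kN$ and $\kM/\kN$ are coherent torsion modules, which lie in $\cT$ by the previous step. The sequence $0\to\kN\to\kL^r\to\kL^r/\kN\to 0$ then forces $\kN\in\cT$ (using $\kL\in\mL_\kH$), after which $0\to\kN\to\kM\to\kM/\kN\to 0$ places $\kM\in\cT$.

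This completes the classical generation of $\cD\sm\kH=\per\kH$ by $\mL_\kH$. Since Theorem~\ref{perf} already guarantees that $\cD\kH$ is compactly generated, Neeman's theorem then implies that $\mL_\kH$ also generates $\cD\kH$ in the triangulated sense, establishing (1). For (2), with $\mL_\kH$ known to be a compact generating set of $\cD\kH$, the standard DG-enhancement theorem invoked as \cite[Theorem~2.2]{lun} supplies the equivalence $\cD\kH\simeq\cD\mA$. The main obstacle is the torsion-free case, where the construction $\kN=\kM\cap\kL^r$ and the verification that both quotients are indeed torsion relies on combining Lemma~\ref{52}\,(2) with the local classification Theorem~\ref{56} of indecomposable projectives over complete hereditary orders.
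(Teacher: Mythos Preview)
Your argument is correct and its overall architecture matches the paper's: reduce to showing every coherent $\kH$-module lies in $\cT=\langle\mL_\kH\rangle$, split into torsion and torsion-free parts, and obtain all torsion modules from the simples $U_{x,i}$ realized as cones of the inclusions $\kL_{x,i+1}\hookrightarrow\kL_{x,i}$.

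Where you diverge is the torsion-free case. The paper proceeds by induction on the rank: once all coherent $\kH$-submodules of $\kK\kL$ are known to lie in $\cT$ (which follows from the torsion case, since any such lattice differs from $\kL$ by torsion in both directions), an arbitrary torsion-free $\kM$ contains a rank-one submodule $\kN$ with $\kM/\kN$ again torsion-free, and one descends. Your argument instead embeds $\kM$ and $\kL^r$ simultaneously into $(\kK\kL)^r$ and uses the single intersection $\kN=\kM\cap\kL^r$ to produce two torsion quotients at once. Both are valid; yours avoids the induction, while the paper's route isolates the useful intermediate statement that every rank-one lattice already lies in $\cT$. Your closing remark that Theorem~\ref{56} is needed to check the quotients are torsion is overkill: this follows immediately from $\kN$, $\kM$ and $\kL^r$ sharing the same $\kK$-span, without any local classification.

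One small correction: your ``convention $\kL_{x,\ka(x)}=\kL$'' is wrong. The chain is set up so that $\kL_{x,0}=\kL$, whereas $\kL_{x,\ka(x)}$ is a proper submodule of $\kL$ (only \emph{locally} isomorphic to it). This does not damage the argument, because for $x\in S$ the sheaf $\kL_{x,\ka(x)}$ is already listed in $\mL_\kH$ (the index range in the theorem is $1\le i\le\ka(x)$), so no convention is required to place the relevant cones inside $\cT$.
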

 \begin{proof}
  \textit{1.} Obviously, $\gnr{\mL_\kH}_\8$ contains all simple $\kH$-modules. Therefore, it contains all
  torsion \cht\ $\kH$-modules, as well as all \cht\ $\kH$-submodules of $\kK\kL$. If $\kM$ is a \cht\ \TF\ 
  $\kH$-module, it contains a submodule $\kN$ isomorphic to a submodule of $\kK\kL$ such that $\kM/\kN$
  is also \TF. It implies that $\gnr{\mL_\kH}_\8$ contains all \cht\ $\kH$-modules, hence coincides with
  $\cD\sm\kH$.
  
  \textit{2}\, follows now from \cite[Proposition~2.6]{lun}.
 \end{proof}
 
  \begin{corollary}\label{510} 
  Let $\aK$ be an algebraically closed field. 
  \begin{enumerate}
  \item A connected hereditary algebraic \ncc\ over $\aK$ is defined up to \meqc\ by a pair $(X,\ka)$, where $X$ is a
  smooth connected algebraic curve over $\aK$ and $\ka:X\cl\to\mN$ is a function such that $\ka(x)=1$
  for almost all $x$. Representatives of the Morita class given by such a pair are $\kH(\fN,\kK)$ as described 
  in Remark~\ref{58}.
  
  \item Two connected hereditary \nccs\ given by the pairs $(X,\ka)$ and $(X',\ka')$ are \meq\ \iff there is
  an isomorphism $\tau:X\to X'$ such that $\ka'(\tau(x))=\ka(x)$ for all $x\in X\cl$.
  \end{enumerate}
  
  \emph{In this case we write $\kH(\fN,X)$ instead of $\kH(\fN,\kK)$.}
  \end{corollary}   
  \begin{proof}
   The Brauer group of $\kK$ is trivial \cite[Theorem~17]{lang}. Therefore, the algebra $\kD$ in 
   Theorem~\ref{57} coincides with $\kK$.
  \end{proof}

 We say that a central \ncc\ $(X,\kA)$ is \emph{rational} (over a field $\aK$) if all simple components of the algebra 
 $\kK\kA$ are of the form $\Mat(n,\kK)$. Then the curve $X$ is also rational over $\aK$.
 
 \begin{theorem}\label{511} 
 Let $(X,\kH)$ be a connected rational hereditary \ncc\ over a field $\aK$ and $\ka:X\cl\to\mN$ 
 be the corresponding function. Let $S=\setsuch{x\in X\cl}{\ka(x)>1}$, $o\in X\cl$ be an arbitrary point.
 \begin{enumerate}
 \item  The set
\[ 
 \omL_\kH=\set{\kL,\kL(-o)}\cup\setsuch{\kL_{x,i}}{x\in S,\,1\le i<\ka(x)}
\]
 classically generates $\cD\sm\kH$, hence generates $\cD\kH$.
 
 \item If $\kL',\kL''\in\omL_\kH$, then $\ext^k_{\kH}(\kL',\kL'')=0$ for all $k>0$, while
 \[
  \dim\hom_{\kH}(\kL',\kL'')=\begin{cases}
   1 & \text{ \emph{if} } \kL'=\kL'', \\
   	 & \text{ \emph{or} } \kL'=\kL(-o),\,\kL''=\kL_{x,i},\\
   	 & \text{ \emph{or} } \kL'=\kL_{x,i},\,\kL''=\kL,\\
   	 & \text{ \emph{or} } \kL'=\kL_{x,j},\,\kL''=\kL_{x,i},\,j>i,\\
   2 & \text{ \emph{if} } \kL'=\kL(-o),\,\kL''=\kL,\\ 
   0 &  \text{ \emph{in all other cases.} }  	
   \end{cases}
 \]
  In particular, $\omL_\kH$ is a tilting set for the category $\cD\kH$.
 
 \item If $\th_{x,i}$ are generators of the spaces $\hom_\kH(\kL_{x,i},\kL_{x,i-1})$ $(1\le i\le \ka(x))$, 
 then the products $\th_x=\th_{x,1}\th_{x,2}\dots\th_{x,\ka(x)}$ are non-zero and any two of them generate 
 $\hom_\kH(\kL(-o),\kL)$. 
 \end{enumerate}
 \end{theorem}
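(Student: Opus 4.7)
\emph{Plan of proof.}
The plan is to exploit the rational structure of $X$ to identify, for every $x\in S$, the module $\kL_{x,\ka(x)}$ with $\kL(-o)$, and then to reduce all cohomological questions to the cohomology of line bundles on a rational curve. In the rational case $\kD=\kK$, so the normal over-ring $\bde$ of Remark~\ref{58} equals $\kO_X$ and $\dM_x=\gM_x$. The prescription $L_{\ka(x)}=\dM^n$ from Theorem~\ref{56} then shows that $\kL_{x,\ka(x)}$ has stalk $\gM_x\kL_x$ at $x$ and $\kL_y$ for $y\ne x$, matching the stalks of $\kL\*_{\kO_X}\kO(-x)=\kL(-x)$. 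Since any two invertible $\kO_X$-modules of degree $-1$ on the rational $X$ are isomorphic, $\kL_{x,\ka(x)}\simeq\kL(-x)\simeq\kL(-o)$, so $\omL_\kH$ contains $\mL_\kH$ up to isomorphism and claim~(1) follows from Theorem~\ref{59}(1).

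For (2), each object of $\omL_\kH$ is coherent and \TF, hence \lop\ as $\kH$ is hereditary, so the sheaves $\Ext^k_\kH(\kL',\kL'')$ vanish for $k\ge1$. By the local-to-global spectral sequence,
\[
\ext^k_\kH(\kL',\kL'')\simeq H^k\bigl(X,\Hom_\kH(\kL',\kL'')\bigr),
\]
and this is zero for $k\ge2$ since $\dim X=1$. Away from $o$ and the points of $S$, $\Hom_\kH(\kL',\kL'')$ is locally isomorphic to $\kO_X$, the centralizer of $\kH$ in $\End_{\kO_X}(\bde^n)$. At a point $x\in S$, an explicit computation using the block structure of $\hqH_x=\qH(\fN_x,\hat\kK_x)$ identifies $\Hom_{\hqH_x}(L_i,L_j)$ as a fractional $\hat\kO_x$-ideal: it equals $\hat\kO_x$ if $i\ge j$ except in the single case $(i,j)=(\ka(x),0)$, where it is $\gM_x^{-1}$, and it equals $\gM_x$ if $i<j$; a symmetric twist by $\gM_o$ or $\gM_o^{-1}$ appears at $o$ when $\kL(-o)$ is involved. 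Assembling these local data, each $\Hom_\kH(\kL',\kL'')$ is an invertible $\kO_X$-module $\kO(D)$ with $\deg D\in\{-1,0,1\}$. On the rational curve, $\dim H^0\bigl(X,\kO(d\cdot o)\bigr)=\max\{0,d+1\}$ and $H^1$ vanishes in each case that arises, which yields exactly the Hom dimensions of the statement and the vanishing of all $\ext^1$.

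For (3), each $\hom_\kH(\kL_{x,i},\kL_{x,i-1})$ is one-dimensional by (2), so $\th_{x,i}$ is determined up to scalar; taking it as a nonzero scalar multiple of the canonical inclusion $\kL_{x,i}\hookrightarrow\kL_{x,i-1}$, the composite $\th_x$ is the inclusion $\kL_{x,\ka(x)}\hookrightarrow\kL$, which is manifestly nonzero. Under the identification $\kL_{x,\ka(x)}\simeq\kL(-o)$ from the first step, $\th_x$ corresponds to a global section of $\kO(o)\simeq\Hom_\kH(\kL(-o),\kL)$ whose zero divisor is exactly $x$; two such sections coming from distinct points $x\ne x'$ of $S$ have distinct simple zeros and are therefore linearly independent in the two-dimensional space $H^0(X,\kO(o))$, hence span it.

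The main obstacle will be the careful local computation at each $x\in S$ of $\Hom_{\hqH_x}(L_i,L_j)$ as a fractional $\hat\kO_x$-ideal inside $\kK\End_{\hqH_x}(\kK L)\simeq\hat\kK_x$; once this is in hand, the global line-bundle identification and the cohomology computation on the rational $X$ are routine.
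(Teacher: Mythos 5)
Your argument follows essentially the same route as the paper's proof: identify $\kL_{x,\ka(x)}\simeq\kL(-x)\simeq\kL(-o)$ on $\mP^1$ and reduce part (1) to Theorem~\ref{59}, compute the sheaves $\Hom_\kH(\kL',\kL'')$ as line bundles of degree $-1$, $0$ or $1$ and read off $\hom$ and $\ext$ from their cohomology on $\mP^1$, and for part (3) interpret $\th_x$ as a section of $\kO(o)\simeq\Hom_\kH(\kL(-o),\kL)$ with zero divisor $x$. The paper's proof is terser --- for (3) it identifies $\th_x$, up to scalar, with multiplication by $t-\xi$ in the affine coordinate $t$ --- but the substance is identical, and your coordinate-free zero-divisor argument is a correct (and slightly cleaner) rendering of the same idea.
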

 \begin{proof}
 \textit{1.} If $X\simeq\mP^1$, then all sheaves $\kO(-x)$, hence all sheaves $\kL(-x)$ are isomorphic. Moreover, in this case 
 $\kL_{x,\ka(x)}\simeq\kL(-x)$ for any $x\in X\cl$, so we can apply Theorem~\ref{59}.
 
 \textit{2.} From the definition of $\kL$ and $\kL_{x,i}$ it immediately follows that
 \[
  \Hom_{\kH}(\kL',\kL'')\simeq\begin{cases}
  \kO & \text{ {if} } \kL'=\kL'', \\
   	 & \text{ {or} } \kL'=\kL_{x,i},\,\kL''=\kL,\\
   	 & \text{ {or} } \kL'=\kL_{x,j},\,\kL''=\kL_{x,i},\,j>i,\\
  \kO(o-x) & \text{ {if} } \kL'=\kL(-o),\,\kL''=\kL_{x,i},\\
  \kO(o) & \text{ {if} } \kL'=\kL(-o),\,\kL''=\kL,\\  
  \kO(-o) &  \text{ {in all other cases.} }  	
   \end{cases}
 \]
 Since $\ext^i_\kH(\kL',\kL'')=H^i(\Hom_\kH(\kL',\kL''))$, it implies the statement.
 
 \textit{3.} One easily sees that, if $x=(1:\xi)$ as the point of $\mP^1$, then $\th_x$, 
 up to a scalar, is the multiplication by $t-\xi$, where $t$ is the affine coordinate 
 on the affine chart $\mA^1_0$. Now the statement is obvious.
 \end{proof}
 
 Recall that a \emph{canonical algebra} \cite[3.7]{ri} is given by a sequence of integers
 $\row kr$, where $r\ge2$ and all $k_i\ge2$ if $r>2$, and a sequence $(\la_3,\la_4,\dots,\la_r)$ 
 of different non-zero elements from $\aK$ (if $r=2$, this sequence is empty). Namely, this algebra, which
 we denote by $\qR(\lst kr;\la_3,\dots,\la_r)$, is given by the quiver
 \begin{equation}\label{can} 
 \xymatrix@R=1ex@C=3em{ & \bu \ar[r]^{\al_{21}} & \bu\ \dots\ \bu \ar[r]^{\al_{k_1-1,1}} &
 	\bu \ar[dr]^{\al_{k_11}} \\
 \bu \ar[ur]^{\al_{11}} \ar[r]_{\al_{12}} \ar[ddr]_{\al_{1r}} 
 & \bu \ar[r]_{\al_{22}} & \bu\ \dots\ \bu \ar[r]_{\al_{k_2-1,2}} &
 	\bu \ar[r]_{\al_{k_22}} &\bu \\
 	&& \vdots \\
  & \bu \ar[r]_{\al_{2r}} & \bu\ \dots\ \bu \ar[r]_{\al_{k_r-1,r}} &
 	\bu \ar[uur]_{\al_{k_rr}} 
 		} 
 \end{equation}
 with relations $\al_j=\al_1+\la_j\al_2$ for $3\le j\le r$, where $\al_j=\al_{k_jj}\dots\al_{2j}\al_{1j}$.
 Certainly, if $r=2$, it is the path algebra of a quiver of type $\ti\rA_{k_1+k_2}$. In particular, if
 $r=2,\,k_1=k_2=1$, it is the Kronecker algebra.
 
 \begin{corollary}\label{512} 
 Let $(X,\kH)$ be a rational projective hereditary \ncc, $\ka:X\cl\to\mN$ be the corresponding function.
 Let $\kT=\bop_{\kF\in\omL_\kH}\kF$ and $\La=(\End_\kH\kT)\op$. If $S=\set{\lst xr}$ with $r\ge2$, we set 
 $k_i=\ka(x_i)$. If $S=\set{x}$, we set $r=2$, $k_2=1$ and $k_1=\ka(x)$. If $S=\0$, we set $r=2,\ k_1=k_2=1$.
\begin{enumerate}
 \item  $\kT$ is a tilting $\kH$-module, i.e. $\ext^i_\kH(\kT,\kT)=0$ for $i\ne0$ and $\kT$ is a local progenerator for $\kH$.

 \item  $\La\simeq\qR(\lst kr;\la_3,\dots,\la_r)$ for some $\la_3,\dots,\la_r$.
  
 \item  The functor $\hom_\kH(\kT,\_\,)$ induces an equivalence $\cD\kH\simeq\cD\La$.
 \end{enumerate} 
 \end{corollary}

 Actually, the preceding considerations also show that a rational projective hereditary \ncc\ is
 Morita equivalent to a \emph{weighted projective line} \citep{gl}. It can also be
 deduced from the description of hereditary \nccs\ and the remark on page~271 of \citep{gl}.
 
 \section{K\"onig resolution and tilting}
 \label{konig}  
 
 \subsection{K\"onig resolution}
 \label{sk1}  
 
 For a \ncc\ $(X,\kA)$ we denote by $\kJ=\kJ(\kA)$ its ideal defined by the localizations as follows:
 \[
  \kJ_x=
  \begin{cases}
   \kA &\text{if $\kA$ is hereditary},\\
   \rad\kA &\text{otherwise}.
  \end{cases}
 \]
 We also denote by $\kA\sh$ the \ncc\ $\END_{\kA\op}\kJ$ (the endomorphism algebra of $\kJ$ as of \emph{right} $\kA$-module).
 It can and will be identified with an over-ring of $\kA$. The following result is proved in \cite[Theorem~39.14]{rei}.
 
 \begin{proposition}
  $\kA=\kA\sh$ \iff $\kA\,$ is hereditary.
 \end{proposition}
 
 Thus we can construct a chain of over-rings
 \[
  \kA=\kA_1\sb\kA_2\sb\kA_3\sb\dots\sb\kA_{n+1}=\kH,
 \]
 where $\kA_{i+1}=\kA_i\sh\ (1\le i\le n)$  and $\kH$ is hereditary. We call $n$ the \emph{level} of $\kA$. The \ncc\
 $\tkA=(\END_\kA\kA_\+)\op$, where $\kA_\+=\bop_{i=1}^{n+1}\kA_i$ is called the \emph{K\"onig resolution} of the \ncc\ $\kA$.
 It is identified with the algebra of matrices
 \[
  \tkA=\mtr{\kA_{11} & \kA_{12} & \kA_{13} &\dots& \kA_{1,n+1} \\
				\kA_{21} & \kA_{22} & \kA_{23} &\dots& \kA_{2,n+1} \\
				\kA_{31} & \kA_{32} & \kA_{33} &\dots& \kA_{3,n+1} \\
				\hdotsfor5\\
				\kA_{n+1,1} & \kA_{n+1,2} & \kA_{n+1,3} &\dots& \kA_{n+1,,n+1} }
 \] 
 where $\kA_{ij}=\Hom_\kA(\kA_i,\kA_j)$. Note that $\kA_{ij}=\kA_j$ if $i\le j$, while $\kA_{i+1,i}\spe\kJ(\kA_i)$. Let $e_i\ (1\le i\le n+1)$ 
 be the diagonal idempotents of $\tkA$, $\kP=\tkA e_1$ and $\tkP=\tkA e_{n+1}$. Then $(\END_{\tkA}\kP)\op\simeq\kA$ and
 $(\END_{\tkA}\tkP)\op\simeq\kH$, so both $\kA$ and $\kH$ are minors of $\tkA$ and the categories $\kA\Md$ and $\kH\Md$ 
 ($\cD\kA$ and $\cD\kH$) are bilocalizations of $\tkA\Md$ (respectively, of $\cD\tkA$) with respect to bilocalization functors
 $\sG=\Hom_{\tkA}(\kP,\_\,)$ and $\tsG=\Hom_{\tkA}(\tkP,\_\,)$.
 
 We also denote $\eps_k=\sum_{j=k+1}^{n+1}e_k$, $\kI_k=\tkA \eps_k\tkA$, $\kQ_k=\tkA/\kI_k$ and $\kP_k=\kQ_ke_k$. The next
 result justifies the term ``resolution'' in the name of $\tkA$.
 
 \begin{theorem}\label{k1} 
 $\bC=(\tkA,\tkP,\kQ_n,\kP_n,\kQ_{n-1},\kP_{n-1},\dots,\kQ_2,\kP_2,\kQ_1)$ is a heredity relating chain between $\tkA$ and 
 $\kQ_1=\kA/\kA_{21}$. Moreover, $(\END_{\kQ_i}\kP_i)\op=\kA_i/\kA_{i+1,i}$ is semi-simple, so $\tkA$ is a quasi-hereditary
 \ncc\ of level $n$ and $\gdim\tkA\le2n$.
 \end{theorem}
 \begin{proof}
  One easily verifies that $\kI_i$ is the ideal of the matrices
  \[
   \kI_i=\mtr{\kA_{i1} & \kA_{i2} &\dots& \kA_{i,i-1} &\kA_{ii} & \kA_{i+1,i} & \dots & \kA_{i,n+1}\\
				 \kA_{i1} & \kA_{i2} &\dots& \kA_{i,i-1} &\kA_{ii} & \kA_{i+1,i} & \dots & \kA_{i,n+1}\\
				 \hdotsfor8 \\
				 \kA_{i1} & \kA_{i2} &\dots& \kA_{i,i-1} &\kA_{ii} & \kA_{i+1,i} & \dots & \kA_{i,n+1}\\
				 \kA_{i+1,1} & \kA_{i+1,2} &\dots& \kA_{i+1,i-1} &\kA_{i+1,i} & \kA_{i+1,i+1} & \dots & \kA_{i+1,n+1}\\
				 \hdotsfor8\\
				  \kA_{n+1,1} & \kA_{n+1,2} &\dots& \kA_{n+1,i-1} &\kA_{n+1,i} & \kA_{n+1,i} & \dots & \kA_{n+1,n+1} }
  \]
  Therefore, $\kQ_i$ is identified with the algebra of $i\xx i$ matrices $(a_{kl})$, where $a_{kl}\in\kA_{kl}/\kA_{i+1,l}$. Thus 
  $a_{ii}\in\kA_i/\kA_{i+1,i}$ and the latter algebra is semi-simple. Hence $\END_{\kQ_i}\kP_i=e_i\kQ_ie_i=\kA_i/\kA_{i,i+1}$
  is semi-simple. On the other hand, $\kI_{\kP_i}=\kQ_ie_i\kQ_i=\kI_{i+1}/\kI_i$, whence $\kQ_{i-1}=\kQ_i/\kI_{\kP_i}$, so $\bC$
  is a relating chain. Moreover, $\kI_i$ is projective as a right $\tkA$-module, hence $\kI_i/\kI_{i+1}$ is projective as a right
  $\kQ_i$-module, so this relating chain is heredity. As $\kH=(\END_{\tkA}\tkP)\op$ is hereditary, $\tkA$ is quasi-hereditary, and
  as all $\END_{\kQ_i}\kP_i$ are semi-simple, $\gdim\tkA\le2n$ by Corollary~\ref{bound}. 
 \end{proof}
 
 It means that the functor $\DG:\cD\tkA\to\cD\kA$ defines a categorical resolution of $\cD\kA$ in the sense of \citep{lun}. If
 $\kA$ is strongly Gorenstein, Theorem~\ref{49} shows that this resolution is \emph{weakly crepant}, i.e. its left and right adjoints
 coincide on perfect complexes (small objects in $\cD\kA$).
 
 We denote by $\bar\kA_i$ the  semi-simple algebra $\kA_i/\kA_{i+1,i}\simeq\END_{\kQ_i}\kP_i$.
  
  \begin{corollary}\label{k2} 
   The derived category $\cD\tkA$ has two semi-orthogonal decompositions: $\cD\tkA=\gnr{\cT_1,\cT_2,\dots,\cT_n,\cT}$ and 
   $\cD\kR=\gnr{\cT',\cT'_n,\dots,\cT'_2,\cT'_1}$, where $\cT\simeq\cT'\simeq\cD\kH$ and $\cT_i\simeq\cT'_i\simeq\cD\bar\kA_i$.
   
   \smallskip\noindent
   \emph{Note that usually $\cT\ne\cT'$ as well as $\cT_i\ne\cT'_i$ for $i>1$, though $\cT_1=\cT'_1=\cD(\tkA/\kI_2)$ 
   naturally embedded into $\cD\tkA$.}
  \end{corollary}

 Let $\sF$ and $\tsF$ be, respecrively, the left adjoints of $\sG$ and $\tsG$; $\sH$ and $\tsH$ be, respectively, the right adjoints
 of $\sG$ and $\tsG$. Then we have the diagram of bilocalizations
 \begin{equation}\label{dia} 
   \xymatrix{ \kH\Md \ar@/^1em/[rr]^{\tsF} \ar@/_1em/[rr]_{\tsH} && \tkA\Md \ar[ll]|{\,\tsG\,} \ar[rr]|{\,\sG\,} &&
    \kA\Md \ar@/_1em/[ll]_{\sF} \ar@/^1em/[ll]^{\sH}
   }
 \end{equation}
 As $\kH$ is an over-ring of $\kA$, there is a morphism $\nu:(X,\kH)\to(X,\kA)$. In the case of commutative curves, it is just the
 normalization of $X$. This morphism induces the direct image functor $\nu_*:\kH\Md\to\kA\Md$ and its left and right adjoints,
 respectively, $\nu^*$ (the inverse image functor) and $\nu^!$. We show that they coincide (in some cases up to twist) with
 the compositions of the functors from the diagram \eqref{dia}.
 
 \begin{theorem}\label{k3} 
 \begin{enumerate}
 \item  $\nu_*\simeq\sG\tsF$ and $\nu^!\simeq\tsG\sH$.
 
 \item  $\tsG\sF\simeq\kC\*_\kH\nu^*$ and $\sG\tsH\simeq\nu_*(\kC'\*_\kH\_)$, where $\kC=\Hom_\kA(\kH,\kA)$ is the conductor
 of $\kH$ in $\kA$ and $\kC'=\Hom_\kH(\kC,\kH)$ is its dual $\kH$-module.
 \end{enumerate}
 \end{theorem}
 \begin{proof}
  We prove the equalities \emph{1}, the equalities \emph{2} are proved analogously.
  
  As $e_1\tkP=\kH$ as an $\kA\mt\kH$-bimodule, 
  \[
   \sG\tsF(\kM)=\Hom_{\tkA}(\kP,\tkP\*_\kH\kM)\simeq e_1\tkP\*_\kH\kM=\kM
   \]
  considered as an $\kA$-module, and it is just $\nu_*\kM$. On the other hand,
  \begin{align*}
  \tsG\sH(\kN)&=\Hom_{\tkA}(\tkP,\Hom_\kA(\kP\ch,\kN))\simeq e_{n+1}\Hom_\kA(\kP\ch,\kN)\\
  					&\simeq \Hom_\kA(\kP\ch e_{n+1},\kN)\simeq\Hom_\kA(\kH,\kN)=\nu^!\kN.
  \end{align*}
 \end{proof}
  
 \subsection{Rational case: Tilting}
 \label{sk2} 
 
 Now we suppose that the \ncc\ $(X,\kA)$ is \emph{rational over an algebraically closed field} $\aK$. 
 We keep the notations of the preceding subsection. According to Corollary~\ref{512}, the hereditary algebra $\kH$
 has a tilting module $\kT$ such that $(\End_\kH\kT)\op=\La$ is a Ringel canonical algebra. Set $\kT'=\tsF(\kT)$, $\kQ=\kQ_n$.
 
 \begin{theorem}\label{tilt} 
 \begin{enumerate}
 \item   $\tkT=\kQ[-1]\+\kT'$ is a \emph{tilting complex} for $\tkA$, i.e. $\tkT\in\cD\sm\tkA$, $\hom_{\cD\tkA}(\tkT,\tkT[m])=0$
 for $m\ne0$ and $\tkT$ generates $\cD\tkA$. Therefore, the functor $\rhom_{\cD\tkA}(\tkT,\_\,)$ establishes an equivalence
 $\cD\tkA\simeq\cD\tLa$, where $\tLa=(\End_{\cD\tkA}\tkT)\op$.
 
 \item  The algebra $\tLa$ is quasi-hereditary.
 
 \item  $\gdim\tLa\le 2n+1$.
 \end{enumerate}
 \end{theorem}
 \begin{proof}
  \textit{1.}\, As $\tkP$ is a projective right $\kH$-module, $\widetilde{\LF}$ is an exact full embedding. It also maps perfect complexes 
  to perfect complexes by Theorem~\ref{DFG}.\textit{3}. Therefore, $\kT'\in\cD\sm\tkA$, $\hom_{\cD\tkA}(\kT',\kT'[m])=0$ for $m\ne0$ 
  and $\kT'$ generates $\im\widetilde{\LF}$. On the other hand, $\kQ$ generates $\ker\tsG\simeq\kQ\Md$. Since 
  $\gnr{\ker\widetilde{DG},\im\widetilde{LF}}$ is a semi-orthogonal decomposition of $\cD\tkA$, $\tkT$ generates $\cD\tkA$
  and $\hom_{\cD\tkA}(\kT',\kQ[m])=0$ for all $m$.
  As $\dim\supp\kQ=0$, $\ext^k_{\tkA}(\kQ,\kM)=H^0(X,\Ext^k_{\tkA}(\kQ,\kM))$ for every quasi-coherent $\tkA$-module $\kM$. A locally
  projective resolution of $\kQ$ is $0\to\kI_{\tkP}\to\tkA\to\kQ\to0$, hence $\kQ\in\cD\sm\tkA$ and $\ext^k_{\tkA}(\kQ,\kM)=0$
  for $k>1$. Moreover, if $\kM$ is a $\kQ$-module, that is $\kI_{\tkP}\kM=0$, then $\Hom_{\tkA}(\kI_{\tkP},\kM)=0$, since
  $\kI_{\tkP}=\kI_{\tkP}^2$. Hence $\ext^1_{\tkA}(\kQ,\kM)=0$. Evidently, $\hom_{\tkA}(\kQ,\kT')=0$, whence 
  $\hom_{\cD\tkA}(\tkT,\tkT[m])=0$ for $m\ne0$, which accomplishes the proof.
  
  \smallskip
  \textit{2.}\, The algebra $\tLa$ can be considered as the algebra of triangular matrices
  \[
   \tLa=\mtr{Q&E\\0&\La},
  \]
  where $Q=H^0(X,\kQ),\,\La=\End_{\kH}\kT$ and $E=\ext^1_{\tkA}(\kQ,\kT')$. We have already seen that there is a heredity
  relating chain of length $n-1$ between $\kQ$ and $\kQ_1$. On the other hand, the algebra $\La$ is \emph{triangular}, i.e. contains
  a set of orthogonal idempotents $\lst fs$ such that $f_i\La f_i$ is semi-simple (in our case equals $\aK$), while $f_i\La f_j=0$
  if $j>i$. One easily sees that if an algebra $A$ can be presented as a matrix algebra of the form
  \[
   A=\mtr{A_1&B\\0&A_2},
  \] 
  where $A_2$ is semi-simple and $A_1$ is quasi-hereditary, then $A$ is also quasi-hereditary. Therefore, $\tLa$ is quasi-hereditary.
  
  \smallskip
  \textit{3.}\, One easily sees that $\gdim\La\le2$. On the other hand, $\gdim Q\le 2n-2$ by Corollary~\ref{bound}. Then the inequality
  $\gdim\tLa\le2n+1$ follows from \citep[ p.~407,~Corollary~4$'$]{pr}.
 \end{proof}
 
  Thus, every rational \ncc\ over an algebraically closed field has a categorical resolution by a finite dimensional quasi-hereditary 
  algebra.
  
  \medskip
   Recall that, for a triangulated category $\cT$, its \emph{Rouquier dimension} $\dim\cT$ is defined as the smallest $d$ such that
   $\gnr{T}_{d+1}=\cT$ for some object $T$ \citep{rou}. Here $\gnr{T}_1$ consists of direct summands of direct sums of shifts of $T$ 
   and $\gnr{T}_{k+1}$ consists of direct summands of the objects $A$ such that there is an exact triangle $B\to A\to C\to B[1]$,
   where $B\in\gnr{T}_k$ and $C\in\gnr{T}_1$.
  
  \begin{corollary}\label{ti2} 
   $\dim\cD\sm\kA\le2n+1$, where $\dim$ means the dimension of Rouquier of a triangulated category \citep{rou}. Namely, 
   $\gnr{\kG}_{2n+2}=\cD\sm\kA$, where $\kG=\kT\+\bop_{i=1}^n\kA_i/\kA_{n+1.i}[-1]$.
  \end{corollary}
  \begin{proof}
   Indeed, $\gnr{\tLa}_{2n+2}=\cD\sm\tLa$ by \cite[Proposition~7.4]{rou}. As the equivalence $\cD\tLa\simeq\cD\tkA$ maps
    $\tLa$ to $\tkT$, $\gnr{\tkT}_{2n+2}=\cD\sm\tkA$. Then $\gnr{\DG\tkT}_{2n+2}=\cD\sm\kA$. Note that $\sG\kM=e_1\kM$
   for any $\tkA$-module $\kM$. Therefore, $\sG\kT'=\kT$ and $\sG\kQ=\bop_{i=1}^n\kA_i/\kA_{n+1,i}$. It accomplishes the proof.
  \end{proof}
  
  If the curve $\kA$ is commutative, the hereditary curve $\kH$ is regular and the algebra $\La$ is hereditary (just a product of 
  Kronecker algebras).  In this case the estimate in Corollary~\ref{ti2} is $2n$ instead of $2n+1$. It generalizes the result of 
  \citep{bd}, where the curves of level $1$ were considered.

 \section*{Acknowledgements}
 
 The results of this paper were mainly obtained during the stay of the second author at the
Max-Plank-Institut f\"ur Mathematik. Its final version was prepared during the visit of the second and the third
author to the Institute of Mathematics of the K\"oln University.



\begin{thebibliography}{99}
 \bibitem[Alonso et al.(1997)]{ajl}
 Alonso~Tarr\'io,~L., Jerem\'ias~L\'opez,~A. and Lipman~J.:
 Local homology and cohomology on schemes.
 Ann. Sci. Ecole Norm. Sup. 30,  1--39 (1997)

 \bibitem[Alonso et al.(2000)]{ajs}
  Alonso~Tarr\'io,~L., Jerem\'ias~L\'opez,~A. and Souto~Salorio,~M.~J.:
  Localization in categories of complexes and unbounded resolutions.
  Canadian Math.~J. 52,  225--247 (2000)


\bibitem[Artin and Zhang(1994)]{az}
 Artin,~M., Zhang,~J.~J.:
 Noncommutative projective schemes.
 Adv. Math. 109, 228--287 (1994)

\bibitem[Auslander et al.(1997)]{ars}
 Auslander,~M., Reiten,~I., Smal\o~S.:
 Representation Theory of Artin Algebras.
 Cambridge Univ. Press (1997) 

\bibitem[Bourbaki(1975)]{bou}
 Bourbaki,~N.:
 Alg\`ebre commutatif, Chapitres 5 \`a 7. Hermann, Paris (1975)

\bibitem[Bourbaki(1980)]{boh}
 Bourbaki,~N.:
 Alg\`ebre. Chapitre~X. Alg\`ebre homologique.
 Springer-Verlag (1980)

\bibitem[Bruns and Herzog(1993)]{bh}
 Bruns,~W.,  Herzog,~J.:
 Cohen-Macaulay Rings.
 Cambridge University Press (1993)

\bibitem[Burban and Drozd(2011)]{bd}
 Burban,~I., Drozd,~Yu.:
 Tilting on non-commutative rational projective curves.
 Math. Ann. 351, 665--709 (2011)
 
  \bibitem[Burban et al.(2016)]{bdg}
  Burban,~I., Drozd,~Yu., Gavran,~V.:
  Singular curves and quasi-hereditary algebras. 
  arXiv:1503.04565 [math.AG]

\bibitem[Cartan and Eilenberg(1956)]{ce}
 Cartan,~H., Eilenberg,~S.:
 Homological Algebra.
 Princeton University Press (1956)

\bibitem[Cline et al.(1988)]{cps}
 Cline,~E., Parshall,~B., Scott,~L.:
 Finite dimensional algebras and highest weight categories.
 J. reine angew. Math. 391, 85--99 (1988)

\bibitem[Curtis and Reiner(1981)]{cr}
 Curtis,~C.~W., Reiner,~I.:
 Methods of Representation Theory, vol.~I.
 Wiley-Interscience (1981)

\bibitem[Dlab and Ringel(1989)]{dr}
 Dlab,~V., Ringel.~C.~M.:
 Quasi-hereditary algebras.
 Illinois J. Math. 33, 280--291 (1989)

\bibitem[Drozd(1973)]{minor}
 Drozd,~Y.~A.:
 Minors and theorems of reduction.
 Coll. Math. Soc. J.~Bolyai 6, 173--176 (1973)

  \bibitem[Drozd(1985)]{dm}
 Drozd,~Yu.~A.:
 On existence of maximal orders.
 Mat. Zametki 37, 313--315 (1985)
 
 \bibitem[Drozd and Greuel(2001)]{dg}
 Drozd,~Yu.~A., Greuel,~G.-M.:
 Tame and wild projective curves and classification of vector bundles. 
 J. Algebra 246, 1–-54 (2001)

\bibitem[Gabriel(1962)]{gab}
 Gabriel,~P.:
 Des cat\'egories ab\'eliennes.
 Bull. Soc. Math. France 90, 323--448 (1962)

\bibitem[Geigle and Lenzing(1987)]{gl}
 Geigle,~W., Lenzing,~H.:
 A class of weighted projective curves arising in representation theory
 of finite dimensional algebras.
 Singularities, Representation of Algebras, and Vector Bundles.
 Lecture Notes Math. 1273, 265--297, Springer-Verlag, (1987)

\bibitem[Geigle and Lenzing(1991)]{gl2}
 Geigle,~W., Lenzing,~H.:
 Perpendicular categories with applications to representations and sheaves.
 J.~Algebra 14, 273--343 (1991)

 \bibitem[Grothen\-dieck(1960)]{gro1}
 Grothendieck,~A.:
 \'El\'ements de g\'eom\'etrie alg\'ebrique: I.
 Publ. Math. I.H.\'E.S. 4 (1960)

\bibitem[Grothen\-dieck(1961)]{gro2}
 Grothendieck,~A.:
 \'El\'ements de g\'eom\'etrie alg\'ebrique: II.
 Publ. Math. I.H.\'E.S. 8 (1961)

\bibitem[Hartshorne(1966)]{ha}
 Hartshorne,~R.:
 Resudues and Duality.
 Lecture Notes in Math. 20. Springer-Verlag (1966)

\bibitem[Herzog and Kunz(1971)]{hk}
 Herzog,~J., Kunz,~E. (ed.):
 Der kanonische Modul eines Cohen-Macaulay-Rings.
 Lecture Notes Math. 238. Springer-Verlag (1971)

\bibitem[K\"onig(1990)]{ko1}
 K\"onig,~S.:
 Quasi-hereditary orders.
 Manus. Math. 68, 417--433 (1990)
 
 \bibitem[K\"onig(1991)]{ko2}
 K\"onig,~S.:
 Every order is the endomorphism ring of a projective module over a
 quasi–hereditary order.
 Commun. Algebra \textbf{19}, 2395--2401 (1991)

 \bibitem[Kuznetsov and Lunts(2012)]{kl}
 Kuznetsov,~A., Lunts,~V.~A.:
 Categorical resolutions of irrational singularities.
 arXiv:1212.6170\,[math.AG].

\bibitem[Lang(1952)]{lang}
 Lang,~S.:
 On quasi-algebraic closure.
 Ann. Math. 55, 373--390 (1952)

\bibitem[Lunts(2010)]{lun}
 Lunts~V.~A., Categorical resolution of singularities,
 J.~Algebra, 323, 2977--3003 (2010) 

 \bibitem[Miyachi(1991)]{mi}
 Miyachi~J.,
 Localization of triangulated categories and derived categories,
 J.~Algebra, 141, 463--483 (1991)

\bibitem[Neeman(1996)]{ne2}
 Neeman,~A.:
 The Grothendieck duality theorem via Bousfield's techniques and Brown representability.
 J.~Amer. Math. Soc. 9, 205--236 (1996)

\bibitem[Neeman(2001)]{ne}
 Neeman,~A.:
 Triangulated categories.
 Princeton University Press (2001).

  \bibitem[Palmer and Roos(1973)]{pr}
  Palmer,~I., Roos,~J.-E.:
  Explicit Formulae for the Global Homological Dimensions of Trivial Extensions of Rings.
 J.~Algebra 27, 380--413 (1973)

\bibitem[Reiner(2003)]{rei}
 Reiner,~I.: Maximal Orders.
 Clarendon Press (2003).

\bibitem[Ringel(1984)]{ri}
 Ringel,~C.~M.:
 Tame Algebras and Integral Quadratic Forms.
 Lecture Notes in Math. 1099. Springer-Verlag (1984).

\bibitem[Rouquier(2008)]{rou}
 Rouquier,~R.:
 Dimensions of triangulated categories.
 J.~K-Theory 1, 193--256 (2008)

 \bibitem[Spaltenstein(1988)]{sp}
 Spaltenstein,~N.:
 Resolutions of unbounded complexes.
 Compositio Math. 65, 121-154 (1988)

\bibitem[Van den Bergh(2004)]{vdb} 
Van den Bergh,~M.: 
Non-commutative crepant resolutions.
The legacy of Niels Henrik Abel, 749–770. Springer, (2004).

\end{thebibliography}
\end{document}